\theoremstyle{plain}\newtheorem{theorem}{Theorem}[section]
\newtheorem{corollary}[theorem]{Corollary}
\newtheorem{lemma}[theorem]{Lemma}
\newtheorem{proposition}[theorem]{Proposition}
\renewenvironment{proof}[1][Proof]{\textbf{#1.} }{\ \rule{0.5em}{0.5em} \par }
\theoremstyle{remark}
\theoremstyle{definition}
\newtheorem{remark}[theorem]{Remark}
\newtheorem{definition}[theorem]{Definition}
\newtheorem{example}[theorem]{Example}
\def\PP{\mathbb{P}}
\def\RR{\mathbb{R}}
\def\mR{\mathbb{R}}
\def\mI{\mathbb{I}}
\def\mE{{\mathbb E}}
\def\mH{{\mathbb H}}
\def\mI{{\mathbb I}}
\def\mL{{\mathbb L}}
\def\mN{{\mathbb N}}
\def\mP{{\mathbb P}}
\def\mR{{\mathbb R}}
\def\mW{{\mathbb W}}
\def\EE{\mathbb{E}}
\def\ZZ{\mathbb{Z}}
\def\cC{{\mathcal C}}
\def\cT{{[0, T]}}
\def\si{{\sigma}}
\def\et{{\eta}}
\def\Om{{\Omega}}
\def\al{{\alpha}}
\def\si{{\sigma}}
\def\tr{{ \hbox{ Tr} }}
\def\esssup {{ \hbox{ ess\ sup} }}
\def\supp {{ \hbox{ supp } }}
\def\si{{\sigma}}
\def\al{{\alpha}}
\renewcommand{\theequation}{\arabic{section}.\arabic{equation}}
\def\red{\color{red}}
\def\diam{\mathrm{diam}}
\newcommand{\ep}{\ensuremath{\varepsilon}}
\newcommand{\ce}{\begin{eqnarray*}}
\newcommand{\de}{\end{eqnarray*}}
\newcommand{\cen}{\begin{eqnarray}}
\newcommand{\den}{\end{eqnarray}}
\def\Om{\Omega}
\def\[{{\Big[}}
\def\]{{\Big]}}
\def\<{{\langle}}
\def\>{{\rangle}}
\def\({{\Big(}}
\def\){{\Big)}}
\def\bx{{\mathbf{x}}}
\def\tr{{\rm tr}}
\def\bt{\begin{theorem}}
\def\et{\end{theorem}}
\def\bl{\begin{lemma}}
\def\el{\end{lemma}}
\def\br{\begin{remark}}
\def\er{\end{remark}}
\def\bx{\begin{Example}}
\def\ex{\end{Example}}
\def\bd{\begin{definition}}
\def\ed{\end{definition}}
\def\bp{\begin{proposition}}
\def\ep{\end{proposition}}
\def\bc{\begin{corollary}}
\def\ec{\end{corollary}}
\def\ba{\begin{assumption}}
\def\ea{\end{assumption}}
\def\cC{{\mathcal C}}
\def\cF{{\mathcal F}}
\def\cJ{{\mathcal J}}
\def\cM{{\mathcal M}}
\def\cR{{\mathcal R}}
\def\cT{{\mathcal T}}
\renewcommand{\theequation}{\arabic{section}.\arabic{equation}}
\let\Section=\section
\def\section{\setcounter{equation}{0}\Section}
\begin{document}

\title[Well-posedness of SDE with discontinuous and unbounded drift]{Strong solution of stochastic differential equations with discontinuous and unbounded coefficients}

\author{ \sc  Yaozhong Hu  }
 \thanks{Y.Hu was supported by the NSERC discovery fund and a centennial  fund of University of Alberta,
National Natural Science Foundation of China (12261046).}
\address{Department of Mathematical and  Statistical
 Sciences, University of Alberta at Edmonton,
Edmonton, Canada, T6G 2G1}
  \email{ yaozhong@ualberta.ca
}
\author{\sc Qun Shi }
\address{School of Mathematics and Statistics, Jiangxi Normal University,
 Nanchang, Jiangxi 330022, P.R.China and Department of Mathematical and  Statistical
 Sciences, University of Alberta at Edmonton}
  \email{shiq3@mail2.sysu.edu.cn}
\thanks{Q.Shi was supported by China Overseas Education Fund Committee,
National Natural Science Foundation of China (11901257, 12261046)  and an NSERC discovery fund. \newline
{AMS Mathematics Subject Classification (2010):} {60G15; 60H07; 60H10; 65C30}
 }

\date{}
\maketitle

\keywords{  }

\begin{abstract}
In this paper we study the existence and uniqueness of   strong solution  of the following $d$-dimensional stochastic differential equation (SDE)  driven by Brownian motion:
\ce
dX_t=b(t,X_t)dt+\sigma(t,X_t)dB_t, ~~~X_0=x,
\de
where $B$ is a $d$-dimensional standard Brownian motion;  the diffusion coefficient $\sigma$ is a  H\"older continuous and uniformly non-degenerate $d\times d$ matrix-valued function and  the drift coefficient   $b $ may be discontinuous and unbounded, not necessarily  in $\mathbb{L}_p^q$,      extending the previous works   to discontinuous and  unbounded drift coefficient situation.
The idea is to combine the Zvonkin's transformation with the Lyapunov function approach.  Zvonkin's transformation is a one-to-one (and quasi-isometric) transformation of a phase space that allows us to pass from a diffusion process with nonzero
drift coefficient to a process without drift.
To this end, we need to establish a local version of  the   connection between the  solutions of the SDE up to the  exit time of a bounded connected open set  $D$
and the associated
partial differential equation on this domain.   As an interesting by-product, we establish a localized  version of the Krylov estimates (Theorem \ref{th19}) and a localized   version of the stability result of the stochastic differential equations of discontinuous coefficients (Theorem \ref{th22}).

{\bf Keywords}: Discontinuity;  localized  Krylov estimate; local Zvonkin's transformation; localized   stability; Lyapunov function;  strong solution;  pathwise uniqueness.
\end{abstract}


\setcounter{equation}{0}
\renewcommand{\theequation}{\arabic{section}.\arabic{equation}}
\numberwithin{equation}{section}

\maketitle

\section{\bf Introduction}
Let $(\Omega,{\mathcal F},{\mathbb P}, (\mathcal  F_t)_{t\geq0})$ be a complete filtered probability space with a  filtration $(\mathcal  F_t)_{t\geq0} $  satisfying  the usual conditions,   namely,    $(\mathcal  F_t)_{t\geq0} $ is a right continuous  increasing family of Borel fields,  which is also complete in the sense that   if $\mathcal{N}\subseteq A$ and $\PP(A)=0$, then $\mathcal{N}\in \cF_0$.       Let $ (B_t =(B_t^1, \cdots, B_t^d)^t)_{t\geq0}$  be a $d$-dimensional ${\mathcal  F_t}$-adapted standard   Brownian motion, where and later   ${}^t $ denotes the transpose of a vector or matrix.

In this work we study the existence and uniqueness of   strong solution to  the following stochastic differential equation (SDE):
\begin{equation}\label{sec1-eq1}
dX_t=b (t,X_t)dt +\sum_{i=1}^d \sigma_i(t,X_t)dB_t^i =b (t,X_t)dt +\sigma(t,X_t)dB_t, \quad X_0=x\in \RR^d,
\end{equation}
where   the diffusion coefficients  $\si_i:\mR_+\times\mR^d\rightarrow \mR^d $, $i=1, \cdots, d$ satisfy   $\sigma=(\si_1, \cdots, \si_d):  \mR_+\times\mR^d\rightarrow \mR^d\otimes\mR^d$ is   uniformly elliptic and      H\"older   continuous with respect to spatial variable in any bounded domain $D$ (connected open subset of $\mathbb{R}^d$).   
The drift coefficient $b : \mR_+\times\mR^d\rightarrow\mR^d$ 
is  measurable and may  be discontinuous.   Unlike in the existing literature 
we do not assume the integrablity of $b$   on the whole space 
$\RR^d$. Instead, we assume that   it is integrable in any space time bounded domain
$[0, T]\times D$ and there is a Lyapunov function $V$ associated with \eqref{sec1-eq1}.  
More precisely, we make the following assumptions about these  coefficients. 
\begin{enumerate}
\item[$({\mathbf H}  ^\sigma)$:]   For any bounded domain $D\subseteq \RR^d$ there exist constants   $\alpha\in(0,1]$, $\kappa=\kappa_{\al, D}>1$,  such that for all $(t,x)\in[0,T]\times D$,
$$
\kappa^{-1}|\xi|^2\leq|\sigma^t(t,x)\xi|^2\leq \kappa|\xi|^2, ~~~\forall\xi\in\mR^d\,,
$$
and
\ce
\|\sigma(t,x)-\sigma(t,y)\|_{HS}\leq \kappa |x-y|^\alpha,
\de
where    $\sigma^t$ stands for  the  transpose of the matrix $\sigma$ and where
for a matrix $A$, $\|A\|_{HS}=\tr(A^tA)$ stands for its Hilbert-Schmidt norm.   We also assume
\begin{equation}
\begin{split}
 \nabla\sigma_i=&\left(\frac{\partial \sigma_{ki}}{\partial x_j}\right)^t_{k,j=1,2,\ldots,d}
 =\begin{pmatrix} \frac{\partial \sigma_{1i}}{\partial x_1}&\cdots&\frac{\partial \sigma_{di}}{\partial x_1}\\
 	\vdots &\cdots&\vdots\\
 	\frac{\partial \sigma_{1i}}{\partial x_d}&\cdots&\frac{\partial \sigma_{di}}{\partial x_d}\\
 \end{pmatrix}\in \mL^q_p([0,T]\times D)\,,
\\\
&i=1, \cdots, d\,,\quad \hbox{for certain $p$ and $q$ satisfying }
\end{split}
\label{e.1.2}
\end{equation}
\begin{equation}
\frac{d}{p}+\frac{2}{q}<1, \label{e.a.1.3}
\end{equation}
where $\si_i=(\si_{1i}, \cdots, \si_{di})^t$ and  $\mL^q_p([0,T]\times D) $ is defined by \eqref{e.2.1} in the next section.

The  constraint \eqref{e.a.1.3}
is the same as in  \cite{Xia} so that we can apply the results there to ensure
 the local  existence and uniqueness of (\ref{sec1-eq1}).

For the  drift coefficient we make the following assumptions.
\item[$({\mathbf H}^{b } )$:]  $b$ is  integrable on
any bounded domain of ${\mR}^d$,
namely,
$$
b\in \mL_p^q([0, T]\times D) <\infty\,, \quad \forall \ \hbox{bounded
domain $D$}\,,
$$
for    $p$ and $q$ appearing in \eqref{e.a.1.3}.

\item[$({\bf H}^{L})$:]   There exists a non-negative  (Lyapunov)  function   $V\in C^{1,2}([0, T]\times\mR^d)$ [the set of all functions $\{f(t,x), t\in [0, T], x\in \mR^d\}$  which are continuously differentiable in $t$ and have continuous derivatives with respect to the spatial variable $x$  up to second order]  satisfying
\ce
\lim_{R\rightarrow\infty}\inf_{0\leq t\leq T, |x|=R}V(t,x)=\infty
\de
 and
\begin{equation}
L_tV(t,x)\leq CV(t,x), ~~~\forall t\in[0,T], ~~~x\in\mR^d\,,  \label{e.1.3}
\end{equation}
for some  constant $C>0 $,  where $L_t $ is the differential operator  associated with \eqref{sec1-eq1}:
\begin{equation}
L_t:=\frac{\partial}{\partial t}+L^{\sigma,b}=\frac{\partial}{\partial t}
+\sum^d_{i =1} b_i(t,x)\frac{\partial }{\partial{x_i} }+\frac{1}{2}\sum^d_{i,j=1}\sum^d_{k=1}(\sigma^{ik}\sigma^{jk})(t,x)\frac{\partial^2}{\partial{x_i}\partial{x_j}}\,.
\end{equation}
\end{enumerate}

The objective of this work is to show that under the above assumptions $ ({\mathbf H}^ \sigma) $, $({\mathbf H}^ {b }) $ and $({\mathbf H}^L)$, the equation
\eqref{sec1-eq1} has a unique strong solution (e.g.  Theorem \ref{th11}).

For stochastic differential equations with discontinuous (yet bounded)  coefficients   there are rather complete  general results about the weak solution and we   refer to the classical work  \cite{SV} and references therein. For  the strong solutions there have been also some important progresses.
Among them let us mention  the works
\cite{KR,  vere,  Zhang, ZY}.  Let us   emphasize an important point that in these works, the assumptions  that $\si$ is uniformly elliptic on the whole space $\RR^d$ and $b\in \mL_p^q=\mL_p^q(\RR^d)$  are   usually  needed,  which generally   require   that $\si$ and $b$ are  bounded
when $|x|\to \infty$.   In \cite{Xia},  the authors  show the weak differentiability of the unique strong solution with respect to the starting point $x$ as well as Bismut-Elworthy-Li's derivative formula for SDE (\ref{sec1-eq1}) under the condition that 1) $\sigma$ is bounded, uniformly continuous,  and nondegenerate, 2)  $b\in{\widetilde\mL}^{q_1}_{p_1}$, $\nabla\sigma\in{\widetilde\mL}^{q_2}_{p_2}$ for some $p_1,q_1\in[2,\infty)$ with $\frac{d}{p_i}+\frac{2}{q_i}<1$, $i=1,2$, where ${\widetilde\mL}^{q_i}_{p_i}$ are some localized spaces of ${\mL}^{q_i}_{p_i}$. But unbounded drift coefficients may still not be contained in this space  because   if $b$ is unbounded,   then $|||b|||_{{\widetilde\mL}^{q_i}_{p_i}}:=\sup_{z\in\mR}||b\cdot\chi^z_r||_{{\mL}^{q_i}_{p_i}}=\infty$ with $\chi\in C_c^\infty(\mR^d)$ and $\chi^z_r(x):=\chi_r(x-z):=\chi(\frac{x-z}{r})$, $r>0$.
The idea  in all of these works is to use the Zvonkin transformation to transform equation \eqref{sec1-eq1} with discontinuous coefficients to an equation without drift and hence the problem of discontinuity of the drift coefficient disappears.

Recently,  the more and more    interest in studying  SDE (\ref{sec1-eq1}) with discontinuous coefficients is partly due to its more and more important role  played  in applications. For example,  the threshold Ornsten-Uhlenbeck processes (e.g. \cite{HX}  and the references therein)  widely used in application contain discontinuous but piecewise linear drift.
 In \cite{Flandoli}, F. Flandoli, M. Gubinelli and E. Priola give an interesting    example of partial differential equation that may lack uniqueness, but it becomes  well-posed when adding  a suitable noise
meaning that the introduction of the noise can sometimes regularize the irregular system.
Let us also point out that in these equations
the drift coefficient $b$ is not  in $\mL_p^q(\RR^d)$.

Let us mention  a recent work \cite{Zhang1}  in which  Zhang and Zhao show the existence and uniqueness of the martingale solution (weak solution) to the above homogeneous (time independent) SDE (\ref{sec1-eq1}) under the following assumptions:
\begin{enumerate}
\item[(1)] the drift coefficient $b$ is decomposed into $b=b_1+b_2$,  where
\begin{equation}
\frac{\langle x,b_1(x)\rangle}{\sqrt{1+x^2}}\leq-\kappa_0|x|^\vartheta+\kappa_1,~~~|b_1(x)|\leq\kappa_2(1+|x|^\vartheta)\label{e.1.5}
\end{equation}
for some $\vartheta\geq0$ and $\kappa_0,\kappa_1,\kappa_2>0$,
\ce
b_2\in \mH^{-\alpha}_p ~~for~~~ some~~~ \alpha\in(0,1/2] ~~~and~~~ p\in\left(\frac{d}{1-\alpha},\infty\right)\,.
\de
\item[(2)] The diffusion coefficient $\sigma$ is uniformly elliptic  and
\ce
||(-\Delta)^{\beta/2}\sigma||_{L^r}<\infty~~~~for~~~ some~~~ \beta\in[\alpha,1],  ~~~r\in(d/\beta,\infty)\,.
\de
\end{enumerate}
They obtain sharp two-sided as well  as the  gradient estimates of the heat kernel associated to the above SDE. Moreover, they study the ergodicity and global regularity of the invariant measures of the associated semigroup.  However, there is no study on   strong solution.

If $b$ and $\sigma$ are locally Lipschitzian,  then  it is well-known that
the assumption $({\bf H} ^L)$  is the famous Lyapunov type condition to guarantee  non explosion of the equation (e.g.  \cite[p.346, Theorem 10.2]{Hu}). So, in some sense our condition is to combine the Lyapunov condition and the integrability condition. Our new set of conditions can   be applied to some new situations. We give only some very special   examples as follows.

\begin{example}(\cite{HX}) One example is the threshold Ornstein-Ulenbeck processes:
\begin{equation}
dX_t=\sum_{i=1}^n (\beta_i-\alpha_i X_t)I_{
\{\theta_{i-1}
\le X_t<\theta_i\}}  dt +\sigma dB_t\,,
\end{equation}
where $\beta_1, \cdots , \beta_n, \alpha_1, \cdots, \alpha_n, -\infty=\theta_0<
\theta_1<\cdots <\theta_{n-1}<\theta_n=\infty$ are constants.   It is clear that for all parameters $\beta_i, \alpha_i, \theta_i$,  the drift coefficient
$b(x)=\sum_{i=1}^n (\beta_i-\alpha_i x)I_{
\{\theta_{i-1}
\le x<\theta_i\}} $ and the diffusion coefficient $\si(x)=\si$ satisfy  \eqref{e.1.3} with $V(x)=|x|^2$ and with some constant $C$.  But it does  not satisfy \eqref{e.1.5} unless $\alpha_1, \cdots, \alpha_n$ are  positive.
\end{example}

\begin{example} Another interesting example is
\begin{equation}
dX_t=\sum_{k=1}^n \left(\sum_{j=1}^{m_k}  \beta_{k,j} X_t^j\right)  I_{
\{\theta_{k-1}
\le X_t<\theta_k\}}  dt +\sigma dB_t\,,
\end{equation}
where $\beta_{k,j}$ are constants and $\beta_{k, m_k}\not=0$  and  $ -\infty=\theta_0<
\theta_1<\cdots <\theta_{n-1}<\theta_n=\infty$ are constants.   It is clear that  if $m_1,  m_n$  are odd, and if  $\beta_{1,n_1},\beta_{n,m_n}$ are negative, then \eqref{e.1.3} is satisfied
with  $V(x)=|x|^2$ and with some constant $C$.  But \eqref{e.1.5} will generally not be satisfied.
\end{example}

In the above assumption (${\bf H}^b$), $b$ may not be locally Lipschitzian  and may be unbounded in
 $\RR^d$ so that  we cannot use the existing theory
 to solve \eqref{sec1-eq1}.  To study the well-posedness of this equation, let us
 consider a twice differentiable function $g:[0, T]\times \RR^d\to \RR$. Then the  It\^o formula yields that
$$
 dg(t, X_t)=L_t g(t, X_t) dt +
 \sum_{i=1}^d (\nabla_x  g(t, X_t) )^t \sigma_i (t, X_t) dB_t^i \,.
$$
If we  can construct a one-to-one (and quasi-isometric) transformation $\Psi=(\Phi_1, \cdots, \Phi_d)^t:[0, T]\times \RR^d  \to \RR^d$ such that $L_t \Phi_i(t, x)=0$. Then the transformation   $  Y_t=\Phi(t, X_t)$       (called Zvonkin transformation (e.g.  \cite{Zvonkin})) will satisfy an SDE  without drift.  As we know by the Yamada-Wanatabe theory and its extension, the H\"older continuity of the diffusion coefficient  can ensure the existence and uniqueness of the strong solution  (e.g. \cite{Zhang}).
Thus, to  obtain  the existence and uniqueness of strong solution,  one need to study  the equation
 $L_t \Phi_i(t, x)=0$ first which was solved in    \cite{Xia}  under the conditions that $\si$ satisfies  (${\bf H}^\si$) and  $
 b\in \mL_p^q([0, T]\times \RR^d) <\infty
 $
 for    $p$ and $q$  satisfying  \eqref{e.a.1.3}.

Since we only assume that $b$ is bounded on bounded domain,    our strategy  to solve \eqref{sec1-eq1} is to  consider first the solvability of the associated parabolic differential equation  with Cauchy-Dirichlet problem on bounded domain
\begin{equation}\label{sec1-eq2}
\left\{
                  \begin{array}{lll}
                  L_t u+f=\partial_tu+L^{\sigma}u+b\cdot\nabla u+f=0,  ~~~(t,x)\in[0,T]\times D,\\
                  u(T,x)=0,~~~x\in D,\\
                  u(t,x)=g(t,x),   ~~~(t,x)\in[0,T]\times\partial D\,,
                  \end{array}
                    \right.
\end{equation}
where $D$ is  bounded nonempty domain (a connected open subset)  of $\mR^d$  so that  $\partial D\in C^2$, $g$ is integrable  function on $[0,T]\times\partial D$,  and $L^{\sigma}$ is introduced later in \eqref{sec3-eq1}.   Then, with the aid of the solution
to the above equation we   find a $C^1$-diffeomorphism $\Phi$ (given in Section 4) to transform  \eqref{sec1-eq1} to another
equation    with only diffusion term. This transformed equation  has a unique strong solution in bounded nonempty domain. Finally,  by a stopping time argument combined with the Lyapunov function $V$ (given in assumption
(${\bf H}^L$))   we obtain the  unique strong solution in $\mR^d$
of  SDE (\ref{sec1-eq1}).

There are new challenges in each of our above steps. First, when the coefficients are nice equation
\eqref{sec1-eq2} is a classical Cauchy-Dirichlet problem in bounded domain. However, when the coefficient is only locally integrable, it seems there is no study on such equation, to our best knowledge. Most of the works are on
the parabolic problem for whole space $\RR^d$.
There are also  studies  on  elliptic problem for general  (including bounded) domain in the works \cite{KR,  vere,  Zhang, ZY}.  In \cite{Krylov1},  it is mentioned that
the results on elliptic problem on general domain can be extended to
parabolic equations.

In the previous works on the strong solution with integrable drift coefficients, an important
technique is the so-called the Zvonkin transformation which reduced the original SDE to an SDE without drift terms. Now we need study   SDE on a bounded domain. Since the solution will
stay  bounded  only in some finite (stopping)  time, we need to  study the solution with a
finite (random) life time for the equation. Not much such study  is available. To obtain the strong solution of  an SDE on the bounded   domain, we extend the coefficients  to the whole
Euclidean space $\RR^d$. However, there is no  available extension theorem  we can immediately  use.
We need to modify some existing results so that they are applicable to our situation. Some of these
are given in Section \ref{s.2} and some are given in Appendix (Section \ref{s.a}).
The stochastic differential equations   on a bounded  domain
is studied in detail in Section \ref{s.4}. For the existence and uniqueness  one of the most important
tools is the Krylov estimate.  We deduce the Krylov type estimate for stochastic differential equations
on bounded domain, namely, a localized version of the Krylov estimate,
 also in Section \ref{s.4}. In Section 5, we prove the main result on  the existence and uniqueness of strong solutions to SDE (\ref{sec1-eq1}) under Lyapunov condition.
In Section 2, we recall some well-known results and give
briefly some preliminaries about the Sobolev differentiabilities of random vector fields.
The last section contains  some technical results obtained and used in the paper.

%
%

Throughout this paper, we use the following convention: $C$ with or without subscripts will denote a positive constant, whose value may be different  in different places, and whose dependence on the parameters can be traced from the calculations.

\section{Prelimiaries}\label{s.2}
We first introduce some spaces and notations for later use.  Let $p,q\in[1,\infty)$,   $T>0$  and let $D$ be a bounded, connected domain in $\mR^d$ with $C^2$ boundary.  We denote by $\mL^q_p([0,T]\times D)$ the space of all real-valued Borel functions on $[0,T]\times D$ such that
\begin{equation}
\|f\|_{\mL^q_p([0,T]\times D)}:=\left(\int_0^T\left(\int_D|f(t,x)|^pdx\right)^{q/p}dt\right)^{1/q} \,. \label{e.2.1}
\end{equation}
For $p,q=\infty$,
\ce
\|f\|_{\mL^\infty_\infty([0,T]\times D)}:=\esssup_{t\in[0,T]}\esssup_{x\in D}|f(t,x)|,
\de
where $\esssup$ denotes the essential maximum norm (e.g. \cite[p.172, Section 12]{analysis2}).
When $q=p$ we denote $\mL^p([0,T]\times D):=\mL^p_p([0,T]\times D)$.   When $D=\RR^d$, we denote
$\mL_p^q=\mL_p^q[0,T]:=\mL^q_p([0,T]\times \RR^d)$.

For any positive integer $m$ and any $p\ge 1$,  $W^m_p(D)$ is used to denote  the usual Sobolev space over  the domain $D\subseteq\mR^d$ with norm
\ce
\|f\|_{W^m_p(D)}:=\sum_{k=0}^m\|\nabla_x^kf\|_{L_p(D)} \,,
\de
where $\nabla_x^k$ denotes the $k^{th}$-order gradient operator on spatial variable and $L_p(D)$ stands
for the Lebesgue space of functions $f$ such that
$$
\|f\|^p_{L_p(D)}:=\int_D|f(x)|^pdx<+\infty\,,   ~~~1\leq p<\infty,
$$
and
$$
\|f\|_{L_{\infty}(D)}:=\esssup_{x\in D}|f(x)|.
$$

For $\beta\in\mR$, let $\mH^\beta_p:=(\mI-\Delta)^{-\frac{\beta}{2}}(L_p)$ be the usual Bessel potential space with norm (we refer to \cite{Adams, Stein, Triebel1})
\ce
\|f\|_{\mH^\beta_p}=\|(\mI-\Delta)^{\frac{\beta}{2}}f\|_p\,,
\de
where $\|\cdot\|_p$ is the usual $L_p$-norm.

By \cite[Theorem 2.23]{Taira}, we can construct an extension operator,
\ce
T: \mH^\beta_p(D)\rightarrow \mH^\beta_p(\RR^d)\,.
\de
We can use this property to define  $\mH^\beta_p(D)$ as    restrictions to $D$ of functions in $\mH^\beta_p(\RR^d)$
with the norm
$$
\|f\|_{\mH^\beta_p(D)}=\inf\|F\|_{\mH^\beta_p(\RR^d)},
$$
where the infimum is taken over all $F\in \mH^\beta_p(\RR^d)$  which equal $f$  on $D$.

Notice that for $m\in\mN$ and $1<p<\infty$ (refer to \cite[Theorem V.3]{Stein} for the whole space $\RR^d$, \cite[Theorem 7.63]{Adams} for the whole space $\RR^d$ and also for suitable regular domain),
\ce
\|f\|_{\mH^m_p}\asymp\|f\|_{W^m_p}\,,
\de
where  and throughout the paper we use $A\lesssim B$ to denote that fact that there is a constant $C$ such that $A\le CB$  and  $A\asymp B$ means $A\lesssim B$ and $B\lesssim A$.
For $\beta\in[0,2)$ and $p\in(1,\infty)$, by Michlin's multiplier theorem (see {\red  } \cite[page 88]{Triebel1}), we know
\ce
\|f\|_{\mH^\beta_p}=\|(\mI-\Delta)^{\frac{\beta}{2}}f\|_p\asymp\|f\|_p+\|(-\Delta)^{\frac{\beta}{2}}f\|_p\,.
\de
Let $C(\mR^d)$ be the collection of all continuous functions in $\mR^d$, equipped with the norm
\ce
||f||_{C(\mR^d)}:=\sup_{x\in\mR^d}|f(x)|.
\de
Obviously, $C(\mR^d)$ is a Banach space. Let $k\in \mN$. Then
\ce
C^k(\mR^d):=\{f\in C(\mR^d):D^\alpha f\in C(\mR^d) ~~~if~~~ |\alpha|\leq k\}
\de
is a  Banach space  equipped with the norm
\ce
||f||_{C^k(\mR^d)}=\sum_{|\alpha|\leq k}||D^\alpha f||_{C(\mR^d)}.
\de
Here,  we use standard notations: $\alpha=(\alpha_1, \cdots , \alpha_d)$ with $\alpha_j\in \mN_0:=\{0, 1,2, \cdots\}$ is a multiindex, $|\alpha|=\sum^d_{j=1}\alpha_j$ and
\ce
D^\alpha f(x)=\frac{\partial^{|\alpha|}f}{\partial_{x_1}^{\alpha_1}\cdots\partial_{x_d}^{\alpha_d}}(x).
\de
For any positive number $0<\delta<1$,  let $\cC^\delta$ be the usual H\"older space with finite norm
\ce
\|f\|_{\cC^\delta(\mR^d)}:=\sup_{x\in\mR^d}|f(x)|+\sup_{x\neq y,x,y\in\mR^d}\frac{|f(x)-f(y)|}{|x-y|^\delta}<\infty.
\de

For any  $0<\delta\not \in \mN_0$, we put
\ce
\delta=[\delta]+\{\delta\},
\de
where $[\delta]=\max\{k\in  \ZZ; k\le \delta\}$ is the integer part of $\delta$, $0<\{\delta\}<1$. Denote the H\"older space (refer to \cite[Chapter 1.2]{Triebel})
\begin{eqnarray}
 \cC^\delta(\mR^d) :&=&\left\{f\in C(\mR^d): \|f\|_{\cC^\delta(\mR^d)}=||f||_{C^{[\delta]}(\mR^d)}\right. \nonumber\\
&&\left.\qquad\quad +\sum_{|\alpha|=[\delta]} 
\sup_{x\neq y,x,y\in\mR^d}\frac{|D^\alpha f(x)-D^\alpha f(y)|}{|x-y|^{\delta-[\delta]}} \right\}. \label{e.def_holder}
\end{eqnarray}
By Sobolev's embedding theorem (\cite[Corollary 7.11]{David}), we have
\begin{equation}\label{sec2-eq1}
\|f\|_{\cC^\delta(D)}\leq C\|f\|_{\mH^\beta_p(D)}, ~~~~~\beta-\delta>\frac{d}{p},~~~\delta\geq0.
\end{equation}
In particular, we take $\delta=0$ to obtain  $\|f\|_\infty\leq C\|f\|_{\mH^\beta_p}$, as $\beta>\frac{d}{p}$.

Furthermore, we also need the following Sobolev space: for $p,q\in[1,\infty), m\ge 0$   we denote by  $\mW^{m,q}_p([0,T]\times D)$
the set of all Borel functions $\left\{f(t,x)\,,
0\le t\le T, x\in \mR^d\right\}$  such that
\ce
\|f\|_{\mW^{m,q}_p([0, T]\times D)}:=\left(\int_0^T   \left[
\| \partial_t f(t) \|_{L_p(D)}^q +\sum_{k=0}^m  \|\nabla^kf(t) \|_{L_p(D)}^q \right]dt \right)^{1/q}< \infty\,,
\de
where $\nabla^k$  is the gradient with respect to   $x$ only.
By ${\mathring\mW}^{1,q}_p([0,T]\times D)$ we mean the subset of $\mW^{1,q}_p([0,T]\times D)$ consisting of all functions 
vanishing on the boundary $\partial D$. For $k\in\{2,3,\ldots\}$
let
\ce
{\mathring\mW}^{k,q}_p([0,T]\times D)={\mathring\mW}^{1,q}_p([0,T]\times D)\cap\mW^{k,q}_p([0,T]\times D).
\de
The norm in ${\mathring\mW}^{k,q}_p([0,T]\times D)$, $k\in\{1,2,\ldots\}$, is taken to be the same as in $\mW^{k,q}_p([0,T]\times D)$.
The introduction of ${\mathring\mW}^{k,q}_p([0,T]\times D)$ is to express the Dirichlet boundary condition.    When we say  $u=g$ on $\partial D$
we understand the  following  condition
\ce
u-g\in{\mathring\mW}^{k,q}_p([0,T]\times D)\,.
\de
Since $D$ is in $C^1$, the surface measure $\si_{\partial D}$ is well-defined and $L_p([0,T]\times \partial D)$ is defined with respect to the $dt\times \si_{\partial D}$ (e.g. \cite[p. 260]{Krylov1}  and  \cite[Section 6.3]{kufner}),  where $dt$ is Lebesuge measure on $[0, T]$.  We can also introduce the  function  class $\mW^{k,q}_p([0,T]\times \partial D)$, $k=0, 1,2,... $ (e.g. \cite[Section 6.7]{kufner}).  By   \cite[Theorem 13.7.2]{Krylov1}, namely, by the equivalence between Bessel space  and Sobolev space on boundary after regarding  $(t,x)$ as a new $(d+1)$-dimensional variable, a function
$g$   in $\mW^{k,q}_p([0,T]\times \partial D)$   can be extended to a function on  $[0,T]\times D$ so that it is in $  \mW^{k,q}_p([0,T]\times D)$.   We shall use this property in the future instead of giving the definition of
$\mW^{k,q}_p([0,T]\times \partial D)$.

Let $f$ be a locally integrable function on $\mR^d$. The classical Hardy-Littlewood maximal function is defined by
\begin{equation}
\cM f(x):=\sup_{0<r<\infty}\frac{1}{|A_r|}\int_{A_r}|f(x+y)|dy,
\label{e.def_maximal}
\end{equation}
where $A_r:=\{x\in\mR^d:|x|<r\}$.  An important property of this operator (see    \cite[Appendix A]{Crippa} and  \cite[Theorem 1.1]{Stein} ) is
\begin{equation}
\|\cM f(x)\|_{L_p(\mR^d)} \le C_{d,p}
\|f\|_{L_p(\mR^d)}\,,
\end{equation}
for  any $p>1$ and $f\in L_p(\mR^d)$. We also need to use a similar   property of this operator
on a domain $D$, which is  given in Appendix.

\section{\bf Solvability of parabolic differential equations with Dirichlet boundary }
\label{s.3}

\noindent Denote
\begin{equation}\label{sec3-eq1}
L^\sigma:=\frac{1}{2}a^{ij}(t,x)\frac{\partial^2}{\partial{x^i}\partial{x^j}}\,,
\quad\hbox{where}\quad
a^{ij}(t,x)=\sum^d_{k=1}(\sigma^{ik}\sigma^{jk})(t,x)\,.
\end{equation}
This section is devoted to the  study of  the following 
  Cauchy-Dirichlet problem in bounded domain $D$ for non-smooth coefficient parabolic equation:
\begin{equation}\label{sec3-eq000}
\left\{
                  \begin{array}{lll}
                  \partial_tu+L^{\sigma}u+b\cdot\nabla u+f=0,  ~~~(t,x)\in[0,T]\times D,\\
                  u(T,x)=0,~~~x\in D,\\
                  u(t,x)=g(t,x),   ~~~(t,x)\in[0,T]\times\partial D\,,
                  \end{array}
                    \right.
\end{equation}
where $D$ is a  bounded nonempty domain (connected open subset) of $\mR^d$ so that  $\partial D\in C^2$  and  $g\in L_1([0,T]\times\partial D)$ (as earlier, with respect to $dt\times \si_{ \partial D}$,
where $dt$ is the Lebesuge measure and $\sigma_{\partial D}$ is the surface measure).
	  We assume that the coefficients $\si$ and $b$ satisfy
({\bf H}$^\si$), ({\bf H}$^b$)  on  $D$. We mentioned a few time that $\partial D\in C^2$. We recall this concept in the following definition.

\begin{definition}\label{df0}Let $k\in\{1,2,\cdots\}$ and  let  $D$ be a  domain in $\mR^d$.  We   say that the domain $D$ is of class $C^k$   (denoted   by $D\in C^k$ (or $\partial D\in C^k$)),    if there are numbers $\kappa, \rho_0$ such that for any point $z\in\partial D$ there exists a diffeomorphism $\psi=\psi^z$ (we omit the dependence of $\psi$ on $z$)
 from   a ball $B_{\rho_0}(z)$  of center $z$ with radius $\rho_0$     onto a domain $D^z\subseteq\mR^d$ such that
\begin{eqnarray}\label{}
&(i)&D^z_+:=\psi(B_{\rho_0}(z)\cap D)\subseteq\mR_+^d
:=\left\{ x=(x^1, x^2, \cdots, x^d)\in \RR^d, x^1>0\right\}\nonumber\\
&&\qquad\qquad ~~~ and~~~ \psi(z)=0\in \RR^d,\nonumber\\
&(ii)&\psi(B_{\rho_0}(z)\cap \partial D)=D^z\cap
\partial \mR_+^d,\nonumber\\
&(iii)&\psi\in C^k(\overline{B}_{\rho_0}(z)), \psi^{-1}\in C^k(\overline{D}^z), \nonumber\\
&&\qquad\qquad ~~~and ~~~
|\psi|_{C^k(B_{\rho_0}(z))}+|\psi^{-1}|_{C^k(D^z)}\leq \kappa .
\end{eqnarray}
\end{definition}
Such  diffeomorphism $\psi$ is said to straighten  or flatten  the boundary
of $D$ near $z$.
Throughout this paper    we fix a domain $D\in \partial C^2$.

\subsection{Operators in a neighborhood of  a boundary  point}
For the moment we fix $z\in \partial D$.  To ease our notation,
for functions $v$ and $\widehat{v}$ defined in $[0,T]\times(B_{\rho_0}(z)\cap D)$ and $[0,T]\times D^z_+$, respectively, we write
\begin{equation}\label{sec-equ0}
v(t,x)=\widehat{v}(t,y)
\end{equation}
if this equality holds for
\begin{equation}\label{sec-equ00}
y=\psi(x)=\psi^z(x)=(\psi^1(x), \cdots, \psi^d(x)) \,.
\end{equation}

This equality is also  used to introduce a function $\widehat{v}
$ on $[0,T]\times D^z_+$ if we are given a function $v$ on $[0,T]\times (B_{\rho_0}(z)\cap D)$, and vice versa.    In this subsection if in a formula   we have both $x$ and $y$, we will always  assume that they are related by (\ref{sec-equ00}).

Let $v\in C^{1,2}([0,T]\times(B_{\rho_0}(z)\cap D))$. In the domain $D^z_+$ define the function $\widehat{v}(t,y)$ by relationship (\ref{sec-equ0}). Obviously, in $[0,T]\times(B_{\rho_0}(z)\cap D)$,
\begin{eqnarray}\label{e.3.7}
\partial_t v(t,x)&=&\partial_t\widehat{v}(t,y), \nonumber\\
v_{x^i}(t,x)&=&\widehat{v}_{y^r}(t,y)\psi^r_{x^i}(x),\nonumber\\
v_{x^ix^j}(t,x)&=&\widehat{v}_{y^r}(t,y)\psi^r_{x^ix^j}(x)+\widehat{v}_{y^ry^l}(t,y)\psi^r_{x^i}(x)\psi^l_{x^j}(x)\,,
\end{eqnarray}
where the Einstein summation convention is used.
In fact, the above formulas are also true if $v\in \mW^{2,q}_p([0,T]\times(B_{\rho_0}(z)\cap D))$ by a limiting argument.

\begin{lemma}\label{le1}  For any   function $\zeta\in C_0^\infty$ such that $\zeta(t,x)=0$ for $|x|\geq\rho_0/2$, $t\in[0,T]$ and $0\leq\zeta\leq1$ everywhere,  where $\rho_0$ is the one appearing in Definition \ref{df0}
set
\begin{equation}\label{sec-equ000}
\zeta^z(t,x)=\zeta(t,x-z),\quad \ ~~~\widehat{\zeta}^z(t,y)=\zeta^z(t,x).
\end{equation}
Let $z\in\partial D$  and let $v$, $\widehat{v}$ be functions on $[0,T]\times (B_{\rho_0}(z)\cap D)$ and $[0,T]\times D^z_+$, respectively, related by (\ref{sec-equ00}). Then
\begin{equation}
\zeta^zv\in\mathring{\mW}^{k,q}_p([0,T]\times (B_{\rho_0}(z)\cap D)) \Longleftrightarrow \widehat{\zeta}^z\widehat{v}\in\mathring{\mW}^{k,q}_p([0,T]\times D^z_+).\label{e.3.8}
\end{equation}
In addition, for $r=0,1,\ldots,k$,
\begin{equation}
\frac1C \|\zeta^zv\|_{\mW^{r,q}_p([0,T]\times (B_{\rho_0}(z)\cap D))}\leq  \|\widehat{\zeta}^z\widehat{v}\|_{\mW^{r,q}_p([0,T]\times D^z_+)}\leq C\|\zeta^zv\|_{\mW^{r,q}_p([0,T]\times (B_{\rho_0}(z)\cap D))}\,, \label{e.3.9}
\end{equation}
where $C=C(d,p,q,k,\kappa ,\zeta)$. Furthermore, the assertion  \eqref{e.3.8} is also true if we replace $\mathring{\mW}^{k,q}_p([0,T]\times (B_{\rho_0}(z)\cap D))$   by  $\mW^{k,q}_p([0,T]\times (B_{\rho_0}(z)\cap D))$.
\end{lemma}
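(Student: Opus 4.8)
This is a ``flattening of the boundary'' statement, and the plan is to reduce all three assertions to the pointwise chain-rule identities \eqref{e.3.7}, their higher-order analogues, and the change-of-variables formula for integrals. Put $w(t,x):=\zeta^z(t,x)v(t,x)$; by the relations \eqref{sec-equ0} and \eqref{sec-equ000} defining $\widehat{v}$ and $\widehat{\zeta}^z$ one has $w(t,x)=(\widehat{\zeta}^z\widehat{v})(t,\psi^z(x))$, so \eqref{e.3.8}--\eqref{e.3.9} say precisely that $f\mapsto f\circ\psi^z$ is a bounded isomorphism between the two parabolic Sobolev spaces, with the cut-off built in. The structural facts I would rely on are all contained in Definition \ref{df0}: conditions (i) and (iii) say that $\psi=\psi^z$ is a $C^k$ diffeomorphism of $B_{\rho_0}(z)$ onto $D^z$ carrying $B_{\rho_0}(z)\cap D$ onto $D^z_+$, with $|\psi|_{C^k(B_{\rho_0}(z))}+|\psi^{-1}|_{C^k(D^z)}\le\kappa$; hence every $\partial^\alpha\psi$ and $\partial^\alpha\psi^{-1}$ of order $1\le|\alpha|\le k$ is bounded by a constant depending only on $d,k,\kappa$, and $\det\nabla\psi$ lies between two positive constants on $B_{\rho_0}(z)$, its reciprocal being $(\det\nabla\psi^{-1})\circ\psi$.

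The first step is the multivariate higher-order chain rule: for $\widehat{v}$ smooth and each $0\le r\le k$,
\begin{align*}
\nabla^r_x w(t,x)&=\sum_{s=0}^{r}P_{r,s}(x)\,\bigl(\nabla^s_y(\widehat{\zeta}^z\widehat{v})\bigr)(t,\psi(x)),\\
\partial_t w(t,x)&=\bigl(\partial_t(\widehat{\zeta}^z\widehat{v})\bigr)(t,\psi(x)),
\end{align*}
where each coefficient $P_{r,s}$ is a universal polynomial in the entries of $\nabla\psi,\ldots,\nabla^r\psi$ --- hence involves only derivatives of $\psi$ of order $\le r\le k$ and is bounded in sup-norm by $C=C(d,k,\kappa)$ --- and the time derivative is unaffected because $\psi^z$ does not depend on $t$. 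Granting this, the right-hand inequality of \eqref{e.3.9} follows by taking $L_p$ norms in $x$ at fixed $t$, using $|P_{r,s}|\le C$, and then substituting $y=\psi(x)$, whose Jacobian is comparable to $1$ by the previous paragraph, to turn the integral over $B_{\rho_0}(z)\cap D$ into one over $D^z_+$; raising to the power $q$ and integrating in $t$ completes the estimate. The left-hand inequality, and hence \eqref{e.3.8} in the $\mW$-form, is obtained by the same computation with the roles of $\psi$ and $\psi^{-1}$, of $v$ and $\widehat{v}$, and of the two domains interchanged, the set-up being symmetric. For $v$ (equivalently $\widehat{v}$) merely in $\mW^{k,q}_p$ I would use that $w=\zeta^z v$ has spatial support in the closed ball $\{|x-z|\le\rho_0/2\}$, compactly contained in $B_{\rho_0}(z)$, so $w$ is a $\mW^{k,q}_p$-limit of $\zeta^z v_n$ with $v_n$ smooth; applying the inequalities just proved to $v_n-v_m$ shows $\{\widehat{\zeta}^z\widehat{v}_n\}$ is Cauchy in $\mW^{k,q}_p((0,T)\times D^z_+)$, and a.e.\ convergence identifies its limit as $\widehat{\zeta}^z\widehat{v}$. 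This is the limiting argument already invoked just after \eqref{e.3.7}.

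It then remains to pass to the spaces carrying a Dirichlet condition, i.e.\ to replace $\mW^{k,q}_p$ by $\mathring{\mW}^{k,q}_p$ in \eqref{e.3.8}, now with $k\ge1$. Since $\zeta$ vanishes for $|x|\ge\rho_0/2$, the function $w=\zeta^z v$ vanishes near the portion of $\partial(B_{\rho_0}(z)\cap D)$ lying on the sphere $\partial B_{\rho_0}(z)$, and correspondingly $\widehat{\zeta}^z\widehat{v}$ vanishes near the portion of $\partial D^z_+$ lying on $\partial D^z$; hence each of the two functions belongs to the relevant $\mathring{\mW}^{k,q}_p$ exactly when its trace on $B_{\rho_0}(z)\cap\partial D$, respectively on $D^z\cap\partial\mR^d_+$, vanishes. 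By condition (ii) of Definition \ref{df0}, $\psi$ carries $B_{\rho_0}(z)\cap\partial D$ onto $D^z\cap\partial\mR^d_+$, and the trace operator on $\mW^{k,q}_p$ ($k\ge1$), being continuous, commutes both with the $C^k$ change of variables and with the smooth approximation above; so the two traces correspond under $\psi$ and one of them vanishes exactly when the other does, which yields the $\mathring{\mW}$-version of \eqref{e.3.8}.

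I expect the main obstacle to be the bookkeeping in the higher-order chain rule: one must check that for every $r\le k$ the coefficients $P_{r,s}$ genuinely involve only derivatives of $\psi$ of order $\le r$, so that $\psi\in C^k$ is enough and no $\nabla^{k+1}\psi$ appears, and that they are bounded uniformly in $x$; together with the routine but slightly technical fact that the trace operator intertwines with the diffeomorphism on the flattened patch of $\partial D$.
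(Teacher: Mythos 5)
Your proposal is correct and takes essentially the same route as the paper: the higher-order chain rule for composition with $\psi^z$ (with coefficients controlled by $|\psi|_{C^k}+|\psi^{-1}|_{C^k}\le\kappa$), the change of variables with Jacobian bounded above and below, the symmetric argument interchanging $\psi$ and $\psi^{-1}$ for the reverse inequality, a density/limiting step for non-smooth $v$, and the correspondence of boundary values under $\psi$ (property (ii) of Definition \ref{df0}) for the $\mathring{\mW}^{k,q}_p$ statement, which you make slightly more explicit via traces than the paper does. The only slip is cosmetic: the estimate your chain-rule computation produces, $\|\zeta^z v\|\le C\|\widehat{\zeta}^z\widehat{v}\|$, is the left-hand inequality in \eqref{e.3.9} (the right-hand one is the one obtained after swapping $\psi$ and $\psi^{-1}$), but since you derive both directions by symmetry this mislabeling affects nothing.
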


\begin{proof}   By relationships  (\ref{sec-equ0}), (\ref{sec-equ00}) and (\ref{sec-equ000}), we have
\ce
\int_0^T\left(\int_{B_{\rho_0}(z)\cap D}|\zeta^zv|^p(t,x)dx\right)^{q/p}dt=\int_0^T\left(\int_{D^z_+}|\widehat{\zeta}^z\widehat{v}|^p(t,y)|J|dy\right)^{q/p}dt\,,
\de
where $J$ is the Jacobian   of the map $\psi^{-1}$, i.e.
$
J:=\frac{\partial\psi^{-1}}{\partial y}
$.  This shows \eqref{e.3.9} when $r=0$.

Without loss of generality, we assume $v\in C^{1,1}([0,T]\times(B_{\rho_0}(z)\cap D))$. Since
\ce
(\zeta^zv)_{x^i}(t,x)&=&v(t,x)\zeta^z_{x^i}(t,x)+\zeta^z(t,x)v_{x^i}(t,x)\\
&=&v(t,x)\widehat{\zeta}^z_{y^r}(t,y)\psi^r_{x^i}(x)+\zeta^z(t,x)\widehat{v}_{y^r}(t,y)\psi^r_{x^i}(x)\\
&=&(\widehat{\zeta}^z\widehat{v})_{y^r}(t,y)\psi^r_{x^i}(x),
\de
we have
\ce
\int_0^T\left(\int_{B_{\rho_0}(z)\cap D}|\zeta^zv|_{x^i}^p(t,x)dx\right)^{q/p}dt\leq K^q_1\int_0^T\left(\int_{D^z_+}|\widehat{\zeta}^z\widehat{v}|_{y^r}^p(t,y)|J|dy\right)^{q/p}dt.
\de
This shows  the first  inequality in \eqref{e.3.9} when $r=1$.

For general  $k$, we have
\ce
\int_0^T\left(\int_{B_{\rho_0}(z)\cap D}|\partial^k(\zeta^zv)|^p(t,x)dx\right)^{q/p}dt\leq C_{\kappa ,q}\int_0^T\left(\int_{D^z_+}\sum_{i=1}^k|\partial^i(\widehat{\zeta}^z\widehat{v})|^p(t,y)|J|dy\right)^{q/p}dt.
\de
Owing to $\partial_t(\zeta^zv)(t,x)=\partial_t(\widehat{\zeta}^z\widehat{v})(t,x)$,  we see
\ce
\int_0^T\left(\int_{B_{\rho_0}(z)\cap D}|\partial_t(\zeta^zv)|^p(t,x)dx\right)^{q/p}dt=\int_0^T\left(\int_{D^z_+}|\partial_t(\widehat{\zeta}^z\widehat{v})|^p(t,y)|J|dy\right)^{q/p}dt.
\de
Thus, we have
\ce
\|\zeta^zv\|_{\mW^{k,q}_p([0,T]\times (B_{\rho_0}(z)\cap D))}\leq C\|\widehat{\zeta}^z\widehat{v}\|_{\mW^{k,q}_p([0,T]\times D^z_+)}.
\de
This proves the first inequality in \eqref{e.3.9}.
As for the second inequality in \eqref{e.3.9}, we just need to make the change of variables  by  using  $\psi$ instead of $\psi^{-1}$.

If $(\zeta^zv)(t,x)=0$ for $x\in\partial(B_{\rho_0}(z)\cap D)$,   then $(\widehat{\zeta}^z\widehat{v})(t,y)=(\zeta^zv)(t,x)=0$ for $y\in\partial D^z_+$.
This shows that the assertion  \eqref{e.3.8} is also true if we replace $\mathring{\mW}^{k,q}_p([0,T]\times (B_{\rho_0}(z)\cap D))$   by  $\mW^{k,q}_p([0,T]\times (B_{\rho_0}(z)\cap D))$.
\end{proof}

Let $v\in C^{1,2}([0,T]\times(B_{\rho_0}(z)\cap D))$. 
Denote
\begin{eqnarray}\label{eq1}
L^0_tv(t,x) := \partial_tv(t,x)+\frac{1}{2} \sum _{i,j=1}^d a^{ij}(t,x)v_{x^ix^j}(t,x)
\end{eqnarray}
and
\begin{eqnarray}\label{2}
\widetilde{L}^z_t\widehat{v}(t,y)
:=  \partial_t\widehat{v}(t,y)+\frac{1}{2}
\sum _{r,l=1}^d \widetilde{a}^{rl}(t,y)\widehat{v}_{y^ry^l}(t,y)+\sum _{r =1}^d\widetilde{b}^r(t,y)\widehat{v}_{y^r}(t,y)\,,
\end{eqnarray}
where
\begin{equation}
\begin{cases}
\widetilde{a}^{rl}(t,y):=\sum _{i,j=1}^d a^{ij}(t,x) \psi^r_{x^i}(x)\psi^l_{x^j}(x)\,;      \\
\widetilde{b}^r(t,y):=\sum _{i,j=1}^d a^{ij}(t,x)\psi^r_{x^ix^j}(x)
\end{cases}
\end{equation}
and we recall that $x$ is related to $y$ by
\eqref{sec-equ00}.

We are going to discuss the equation
\begin{equation}
L^0_t v(t,x)=f(t,x)\label{e.3.13}
\end{equation}
for some function $f$ or
\begin{equation}
\tilde L_t \widehat{v}(t,y)=\widehat{f}(t,y)\,,
\label{e.3.14}
\end{equation}
where $\widehat{f}(t,y)=f(t,x)$.

\begin{lemma}\label{le2} 
Let $\sigma$ satisfy  $({\bf  H}^\sigma )$ on the domain $D$.
\begin{enumerate}
\item[(i)] For any $\psi$ and $D^z$  defined in  Definition \ref{df0} and  for any $y,  y_1,y_2\in D^z$, we have
\ce
|\widetilde{a}^{rl}(t,y)|\leq C,~~~|\widetilde{a}^{rl}(t,y_1)-\widetilde{a}^{rl}(t,y_2)|\leq C |y_1-y_2|^\alpha\,,
\de
where $C=C(d,\kappa)$.\\
\item[(ii)]  A function $v\in\mW^{2,q}_p([0,T]\times(B_{\rho_0}(z)\cap D))$  satisfies \eqref{e.3.13} in $[0,T]\times(B_{\rho_0}(z)\cap D)$ if and only if   $\widehat{v}\in\mW^{2,q}_p([0,T]\times D^z_+)$    satisfies \eqref{e.3.14}   in $[0,T]\times D^z_+$.\\
\item[(iii)]  The operator $\widetilde{L}_t$ is parabolic in $[0,T]\times D^z$, that is,  there is a $\widetilde{\kappa}>0$  such that
\ce
\widetilde{a}^{rl}(t,y)\theta^r\theta^l\geq\widetilde{\kappa}|\theta|^2, ~~~\forall\theta\in\mR^d, ~~~t\in[0,T],~~~y\in D^z.
\de
\end{enumerate}
\end{lemma}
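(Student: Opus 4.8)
The plan is to establish the three assertions in order, each by a direct computation resting on the chain-rule identities \eqref{e.3.7}, the ellipticity and Hölder bounds of $({\bf H}^\sigma)$, and the $C^2$-bounds on $\psi$ and $\psi^{-1}$ recorded in Definition \ref{df0}.

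For (i) I would write $\widetilde a^{rl}(t,y)=\sum_{i,j=1}^d a^{ij}(t,x)\,\psi^r_{x^i}(x)\,\psi^l_{x^j}(x)$ with $x=\psi^{-1}(y)$ and combine three observations. First, $({\bf H}^\sigma)$ forces the entries of $\sigma$ to be bounded on $[0,T]\times D$ (test the ellipticity inequality on basis vectors), so $a^{ij}=\sum_k\sigma^{ik}\sigma^{jk}$ is bounded by a constant $C(d,\kappa)$ and, being a finite sum of products of bounded $\alpha$-Hölder functions, is $\alpha$-Hölder in the spatial variable uniformly in $t$, again with constant $C(d,\kappa)$. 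Second, since $\psi\in C^2(\overline B_{\rho_0}(z))$ with $C^2$-norm at most $\kappa$, the entries $\psi^r_{x^i}$ are bounded by $\kappa$ and Lipschitz on $B_{\rho_0}(z)$. Third, $y\mapsto x=\psi^{-1}(y)$ is Lipschitz with constant at most $\kappa$. Composing with this Lipschitz map turns $a^{ij}$ into an $\alpha$-Hölder function of $y$ and each $\psi^r_{x^i}$ into a Lipschitz — hence, on the bounded set $D^z$, $\alpha$-Hölder — function of $y$; since a product of finitely many bounded $\alpha$-Hölder functions is again $\alpha$-Hölder, both bounds in (i) follow with $C=C(d,\kappa)$ (the exponent $\alpha$ being fixed).

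For (ii) I would substitute \eqref{e.3.7} into $L^0_tv(t,x)=\partial_tv+\tfrac12\sum_{i,j}a^{ij}v_{x^ix^j}$ and collect terms: the highest-order part becomes $\tfrac12\sum_{r,l}\widetilde a^{rl}\widehat v_{y^ry^l}$ and the remaining terms assemble into the first-order term carried by $\widetilde b^r$, so that $L^0_tv(t,x)=\widetilde L^z_t\widehat v(t,y)$ whenever $y=\psi(x)$. Because $\psi$ is a $C^2$-diffeomorphism between the bounded domains $B_{\rho_0}(z)\cap D$ and $D^z_+$ with Jacobian bounded above and away from zero, the same change of variables underlying Lemma \ref{le1} gives the equivalence $v\in\mW^{2,q}_p((0,T)\times(B_{\rho_0}(z)\cap D))\iff\widehat v\in\mW^{2,q}_p((0,T)\times D^z_+)$, while $f(t,x)=\widehat f(t,y)$ holds by definition; hence \eqref{e.3.13} is equivalent to \eqref{e.3.14}. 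The pointwise identity is immediate for $v\in C^{1,2}$; for a general $v\in\mW^{2,q}_p$ I would take smooth $v_n\to v$ in $\mW^{2,q}_p$, apply the identity to each $v_n$, and pass to the limit, using the norm equivalence together with the boundedness of $a^{ij}$, $\widetilde a^{rl}$ and $\widetilde b^r$ to identify the limits.

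For (iii), fix $\theta\in\mR^d$, set $\eta^i:=\sum_{r}\psi^r_{x^i}(x)\theta^r$ (that is, $\eta=(D\psi(x))^{\top}\theta$), and compute $\widetilde a^{rl}(t,y)\theta^r\theta^l=\sum_{i,j}a^{ij}(t,x)\eta^i\eta^j\ge\kappa^{-1}|\eta|^2$ by $({\bf H}^\sigma)$. Since $D\psi(x)^{-1}=D\psi^{-1}(y)$ has norm at most $\kappa$, we get $|\theta|=|(D\psi^{-1}(y))^{\top}\eta|\le\kappa|\eta|$, whence $\widetilde a^{rl}(t,y)\theta^r\theta^l\ge\kappa^{-3}|\theta|^2$, so $\widetilde\kappa=\kappa^{-3}$ works. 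The only step needing a little care is the density argument in (ii) — ensuring the chain rule \eqref{e.3.7} survives the passage from $C^{1,2}$ to $\mW^{2,q}_p$ — which is handled by the norm equivalence under $\psi$ and the boundedness of all coefficients; I do not expect a genuine obstacle here.
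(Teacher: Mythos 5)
Your proposal is correct and follows essentially the same route as the paper: (i) and (ii) are direct consequences of the chain-rule identities \eqref{e.3.7} together with the bounds of $({\bf H}^\sigma)$ and the $C^2$-bounds on $\psi,\psi^{-1}$ (the paper simply declares (i) obvious and cites \eqref{e.3.7} for (ii)), and your (iii) is the same linear-algebra argument as the paper's, writing $\widetilde a^{rl}\theta^r\theta^l=a^{ij}\eta^i\eta^j$ with $\eta=(D\psi)^{\top}\theta$ and bounding $|\theta|\le C\kappa|\eta|$ via $D\psi^{-1}(y)=(D\psi(x))^{-1}$. You merely supply details the paper leaves implicit (the product/composition Hölder estimates in (i) and the density passage from $C^{1,2}$ to $\mW^{2,q}_p$ in (ii)), so there is no substantive difference.
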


\begin{proof}(i) The first assertion is obvious.

\noindent (ii) Using the computations in \eqref{e.3.7}, we see that if $v$ satisfies \eqref{e.3.13}, then $\tilde v$ satisfies
 \eqref{e.3.14}. The reverse part is similar.

\noindent (iii)  Notice that
\begin{equation*}
\begin{split}
\widetilde{a}^{rl}(t,y)\theta^r\theta^l
=&a^{rl}(t,x)\psi^r_{x^i}(x)\psi^l_{x^j}(x)\theta^r\theta^l\\
 =&a^{rl}(t,x)(\theta\psi)^r_{x^i}(x)(\theta\psi)^l_{x^j}(x)\geq\kappa|(\theta\cdot\psi)_{x}(x)|^2\,.
\end{split}
\end{equation*}
Letting  $\phi:=\psi^{-1}$ we have $\phi(y)=x$ and
\ce
\psi^r_{x^i}(x)\phi^i_{y^j}(y)=\delta_{rj}=
\left\{
                    \begin{array}{lll}
                     0, ~~~if ~~~r\neq j,\\
                     1, ~~~if ~~~r=j\,,
                    \end{array}
                    \right.
\de

\ce
(\theta\cdot\psi)^r_{x^i}(x)\phi^i_{y^j}(y)=\theta^r\delta_{rj}=\theta^j\,.
\de
Thus, we have
\ce
|\theta|^2\leq C\kappa^2|(\theta\psi)_{x}|^2.
\de
This yields
\ce
\widetilde{a}^{rl}(t,y)\theta^r\theta^l\geq\widetilde{\kappa}|\theta|^2
\de
with $\widetilde{\kappa}$ depending  on $d,\kappa  $.
\end{proof}

Next,  we fix a function $\eta\in C^\infty_0$ so  that
\begin{equation*}
\eta (t,x)=
\begin{cases}
1,& \hbox{ for $t\in[0,T], |x|\leq\rho_0/2$},\\
 0,&  \hbox{for $t\in[0,T], |x|\geq  3 \rho_0/4$},\\
\end{cases}
\end{equation*}
and $0\leq\eta\leq1$ everywhere. Define
\begin{eqnarray}\label{0-0}
&&\eta^z(t,x) := \eta(t,x-z),\qquad \widehat{\eta}^z(t,y):=\eta^z(t,x), \nonumber \\
&&\widehat{L}_t^z(y) := \widehat{\eta}^z(t,y)\widetilde{L}^z_t(y)+(1-\widehat{\eta}^z(t,y))(\Delta+\partial_t).
\end{eqnarray}
Let $\zeta^z$ be the function introduced in Lemma \ref{le1}.   It is easy to verify
\begin{equation}\label{3}
  \widehat{L}_t^z(\zeta^z\cdot)=\widetilde{L}_t^z(\zeta^z\cdot).
\end{equation}

Now, we first deal with the case that $b$ is identically equal to zero in operator $L_t$ and recall that  \\
\ce
L^0_t:=\partial_t+\frac{1}{2}\sum^d_{i,j=1}\sum^d_{k=1}(\sigma^{ik}\sigma^{jk})(t,x)\frac{\partial^2}{\partial{x_i}\partial{x_j}}\,,
\de
where the coefficient $\sigma$ is defined on $D$ and satisfies  the condition
(${\bf H}^\sigma$) on $D$. By Proposition \ref{prop6.01} we can extend $\sigma$  to a function on    $\RR^d$ satisfying
(${\bf H}^\sigma$) on the whole space $\RR^d$.
The  relation  \eqref{3}   and  \cite[Theorem 7.2.8 with $b=0$]{Krylov1} allow us to find a $\lambda_0\geq1$ depending only on $\kappa,\rho_0,p,q,d,\alpha$ such that for $\lambda\geq\lambda_0$,  the operator
\ce
\lambda-\widehat{L}_t^z=\lambda I-\widehat{L}_t^z\  : \mW^{2,q}_p([0,T]\times\mR^d)\rightarrow \mL^q_p([0,T]\times\mR^d)
\de
is invertible,  where $I$ is the identity operator. On the other hand,  applying a time dependent version of    \cite[Lemma 8.2.1]{Krylov}  yields that  for $\lambda\geq\lambda_0$  the operator
\begin{equation}\label{l1}
\lambda-\widehat{L}_t^z: {\mathring\mW}^{2,q}_p([0,T]\times\mR_+^d)\rightarrow \mL^q_p([0,T]\times\mR_+^d)
\end{equation}
is invertible. We denote this  inverse   by
$$
\widehat{\cR}^z_{\lambda,t}:=\left(\lambda-\widehat{L}_t^z\right)^{-1}\,.
$$
 Define
\begin{eqnarray}\label{0-1}
\Psi &: & w = w(t,y)\rightarrow \Psi w(t,x)=w(t,\psi(x)),\nonumber\\
\Psi^{-1} &:&v = v(t,x)\rightarrow \Psi^{-1} v(t,y)=v(t,\psi^{-1}(y)),\nonumber\\
R^z_{\lambda,t}&:&  f = f(t,x)\rightarrow R^z_{\lambda,t}f(t,x)=\Psi\widehat{\cR}^z_{\lambda,t}\Psi^{-1}[\eta^zf](t,x)\,. \nonumber\\
\end{eqnarray}
From the definitions  of $\Psi$ and $\Psi^{-1}$ it follows
\ce
\widehat{v}=\Psi^{-1}v,~~~~v=\Psi \widehat{v}
\de
and   the identities   \eqref{2} and (\ref{3}) imply that
\begin{equation}\label{4}
\begin{split}
&\widehat{L}^z_t\Psi^{-1}(\zeta^zv)=\widetilde{L}^z_t\Psi^{-1}(\zeta^zv)=\Psi^{-1}L_t(\zeta^zv),\\
&\Psi\widehat{L}^z_t\Psi^{-1}(\zeta^zv)=L_t(\zeta^zv), ~~~\widehat{L}^z_t(\widehat{\zeta}^z\widehat{v})=\Psi^{-1}L_t\Psi(\widehat{\zeta}^z\widehat{v}).
\end{split}
\end{equation}

\begin{theorem}\label{th0}
(i) If $\zeta^zv\in{\mathring\mW}^{2,q}_p([0,T]\times D)$, then for any $\lambda\geq\lambda_0$, we have
\ce
\zeta^zv=R^z_{\lambda,t}(\lambda-L^0_t)(\zeta^zv)
\quad \hbox{on $[0,T]\times(B_{\rho_0}(z)\cap D)$}.
\de
(ii) There is a constant $C$ depending only on $\kappa,\rho_0,p,q,d,\alpha$ such that for $\lambda\geq\lambda_0$ and $f\in L^q_p([0,T]\times D)$, we have
\ce
\zeta^zR^z_{\lambda,t}f\in{\mathring\mW}^{2,q}_p([0,T]\times(B_{\rho_0}(z)\cap D)),
\de
and
\begin{gather}\label{5}
\lambda\|\zeta^zR^z_{\lambda,t}f\|_{\mL^q_p([0,T]\times D)}+\lambda^{1/2}\|\zeta^zR^z_{\lambda,t}f\|_{\mW^{1,q}_p([0,T]\times D)}
  +\|\zeta^zR^z_{\lambda,t}f\|_{\mW^{2,q}_p([0,T]\times D)}
  \nonumber \\
\qquad\qquad+\|\partial_t(\zeta^zR^z_{\lambda,t}f)\|_{\mL^q_p([0,T]\times D)}
  \leq C\|f\|_{\mL^q_p([0,T]\times(B_{\rho_0}(z)\cap D))}.
\end{gather}
\end{theorem}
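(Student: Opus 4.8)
The strategy is to reduce both statements to the invertibility of $\lambda-\widehat{L}_t^z$ on ${\mathring\mW}^{2,q}_p((0,T)\times\mR_+^d)$ established just above (together with the $\lambda$-weighted a priori estimate accompanying it), transferring everything through $\Psi$ by means of Lemma \ref{le1}, the conjugation identities \eqref{4}, and the fact that $\widehat{\eta}^z\equiv 1$ wherever $\widehat{\zeta}^z$ is supported. For part (i): since $\zeta^zv$ is supported in $\{|x-z|\le\rho_0/2\}$ and vanishes on $\partial D$, it lies in ${\mathring\mW}^{2,q}_p((0,T)\times(B_{\rho_0}(z)\cap D))$, so $\Psi^{-1}(\zeta^zv)=\widehat{\zeta}^z\widehat{v}\in{\mathring\mW}^{2,q}_p((0,T)\times D^z_+)$ by Lemma \ref{le1}; being supported in a compact subset of $D^z$ and vanishing on $D^z\cap\partial\mR_+^d$, its extension by zero belongs to ${\mathring\mW}^{2,q}_p((0,T)\times\mR_+^d)$. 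Because $\widehat{\eta}^z\equiv 1$ on the support of $\widehat{\zeta}^z\widehat{v}$ and of all its derivatives, $\widehat{L}_t^z(\widehat{\zeta}^z\widehat{v})=\widetilde{L}_t^z(\widehat{\zeta}^z\widehat{v})$, and then the change-of-variables formulas \eqref{e.3.7} (equivalently \eqref{4}) give $(\lambda-\widehat{L}_t^z)(\widehat{\zeta}^z\widehat{v})=\Psi^{-1}\big[(\lambda-L^0_t)(\zeta^zv)\big]$. Since $(\lambda-L^0_t)(\zeta^zv)$ is again supported in $\{|x-z|\le\rho_0/2\}$, where $\eta^z\equiv 1$, inserting the cutoff $\eta^z$ changes nothing, and therefore $R^z_{\lambda,t}(\lambda-L^0_t)(\zeta^zv)=\Psi\widehat{\cR}^z_{\lambda,t}(\lambda-\widehat{L}_t^z)(\widehat{\zeta}^z\widehat{v})=\Psi(\widehat{\zeta}^z\widehat{v})=\zeta^zv$ on $(0,T)\times(B_{\rho_0}(z)\cap D)$, using that $\widehat{\cR}^z_{\lambda,t}$ inverts $\lambda-\widehat{L}_t^z$ on ${\mathring\mW}^{2,q}_p((0,T)\times\mR_+^d)$.

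For part (ii): write $\zeta^zR^z_{\lambda,t}f=\Psi(\widehat{\zeta}^zw)$ with $w:=\widehat{\cR}^z_{\lambda,t}\widehat{g}$ and $\widehat{g}:=\Psi^{-1}[\eta^zf]$. A change of variables with bounded Jacobian, together with $0\le\eta^z\le 1$ and $\supp\eta^z\subseteq\{|x-z|\le 3\rho_0/4\}\subseteq B_{\rho_0}(z)\cap D$, gives $\|\widehat{g}\|_{\mL^q_p((0,T)\times\mR_+^d)}\le C\|f\|_{\mL^q_p((0,T)\times(B_{\rho_0}(z)\cap D))}$; in particular $w\in{\mathring\mW}^{2,q}_p((0,T)\times\mR_+^d)$ is well defined. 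Since $\widehat{\zeta}^z$ is a smooth cutoff supported in a compact subset of $D^z$, the product $\widehat{\zeta}^zw$ lies in ${\mathring\mW}^{2,q}_p((0,T)\times D^z_+)$, whence by Lemma \ref{le1} the function $\zeta^zR^z_{\lambda,t}f$ lies in ${\mathring\mW}^{2,q}_p((0,T)\times(B_{\rho_0}(z)\cap D))$ with $\mW^{r,q}_p$ norms comparable to those of $\widehat{\zeta}^zw$ for $r=0,1,2$, the $\partial_t$ contributions being preserved exactly since $\psi$ is $t$-independent. It remains to estimate $\widehat{\zeta}^zw$: by the Leibniz rule $\nabla^k(\widehat{\zeta}^zw)$ and $\partial_t(\widehat{\zeta}^zw)$ expand into $\widehat{\zeta}^z\nabla^kw$, $\widehat{\zeta}^z\partial_tw$, and commutator terms in which $\widehat{\zeta}^z$ is differentiated and $w$ carries strictly fewer derivatives. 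The cutoff-free terms are controlled by the $\lambda$-weighted a priori estimate accompanying the solvability \eqref{l1} (cf.\ \cite[Theorem 7.2.8]{Krylov1} and \cite[Lemma 8.2.1]{Krylov}), which reads $\sum_{k=0}^{2}\lambda^{(2-k)/2}\|\nabla^kw\|_{\mL^q_p}+\|\partial_tw\|_{\mL^q_p}\le C\|\widehat{g}\|_{\mL^q_p}$, while each commutator term carries a strictly larger negative power of $\lambda^{1/2}$ and is absorbed since $\lambda\ge\lambda_0\ge 1$; combining with the bound on $\|\widehat{g}\|_{\mL^q_p}$ yields \eqref{5}.

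The main obstacle is not a single estimate but the careful bookkeeping of supports and of boundary behaviour under $\Psi$: one has to verify that $\widehat{\zeta}^z\widehat{v}$ (resp.\ $\widehat{\zeta}^zw$), after extension by zero, genuinely belongs to ${\mathring\mW}^{2,q}_p((0,T)\times\mR_+^d)$ (resp.\ ${\mathring\mW}^{2,q}_p((0,T)\times D^z_+)$), i.e.\ that the Dirichlet condition on $\partial D$ is correctly transported to $\partial\mR_+^d$ while the spurious interior boundary $\psi(\partial B_{\rho_0}(z))$ is annihilated by the cutoff; and one has to check that localizing by $\widehat{\zeta}^z$ does not degrade the $\lambda$-scaling of the a priori estimate, which works precisely because differentiating the cutoff lowers the order of the surviving derivative of $w$ and hence raises the negative power of $\lambda^{1/2}$, leaving all commutator terms harmless for $\lambda\ge 1$. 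One should also keep in mind that $R^z_{\lambda,t}f$ is defined only on $(0,T)\times(B_{\rho_0}(z)\cap D)$, the domain of $\psi$, which is exactly the set on which the identities above are asserted.
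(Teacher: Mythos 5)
Your proposal is correct and follows essentially the same route as the paper: both parts are reduced, via $\Psi$ and the norm-equivalence of Lemma \ref{le1} together with the identity $\widehat{L}^z_t(\widehat{\zeta}^z\cdot)=\widetilde{L}^z_t(\widehat{\zeta}^z\cdot)$ and the fact that $\eta^z\equiv 1$ on the support of $\zeta^z$, to the invertibility and $\lambda$-weighted a priori estimate for $\lambda-\widehat{L}^z_t$ on ${\mathring\mW}^{2,q}_p((0,T)\times\mR^d_+)$. Your extra care in spelling out the Leibniz/commutator absorption for $\lambda\ge 1$ and the zero-extension of $\widehat{\zeta}^z\widehat{v}$ only makes explicit steps the paper leaves implicit.
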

\begin{proof}(i) Define $w:=\zeta^zv,  f:=\lambda w-L^0_tw$. Then by Lemmas \ref{le1} and \ref{le2} we have $\widehat{w}\in{\mathring\mW}^{2,q}_p([0,T]\times D^z_+)$.
Since $\eta^zf=f$ in $([0,T]\times (B_{\rho_0/2}(z))\cap D)$,  we see
\ce
\widehat{f}=\Psi^{-1}f=\Psi^{-1}[\eta^zf].
\de
This  yields
\ce
\widehat{w}(t,y)=(\lambda-\widehat{L}^z_t)^{-1}\widehat{f}(t,y)=\widehat{\cR}^z_{\lambda,t}\widehat{f}(t,y)=\widehat{\cR}^z_{\lambda,t}\Psi^{-1}[\eta^zf](t,y),
\de
and
\ce
w(t,x)=\Psi\widehat{w}(t,x)=\Psi\widehat{\cR}^z_{\lambda,t}\Psi^{-1}[\eta^zf](t,x)=R^z_{\lambda,t}f(t,x)=R^z_{\lambda,t}(\lambda-L^0_t)(\zeta^zv)
\de
for $y\in D^z_+$ and $x=\psi^{-1}(y)\in B_{\rho_0/2}(z))\cap D$.

\noindent (ii) By  Lemma \ref{le1} and  equation (\ref{l1}), we have for $\lambda\geq\lambda_0\geq1$,
\ce
&&\lambda\|\zeta^zR^z_{\lambda,t}f\|_{\mL^q_p([0,T]\times D)}+\lambda^{1/2}\|\zeta^zR^z_{\lambda,t}f\|_{\mW^{1,q}_p([0,T]\times D)}\\
&\;&\qquad\qquad +\|\zeta^zR^z_{\lambda,t}f\|_{\mW^{2,q}_p([0,T]\times D)}+\|\partial_t(\zeta^zR^z_{\lambda,t}f)\|_{\mL^q_p([0,T]\times D)}\\
&&\leq C\bigg(\lambda\|\widehat{\zeta}^z\widehat{\cR}^z_{\lambda,t}\widehat{f}\|_{\mL^q_p([0,T]\times {\mR}^d_+)}+\lambda^{1/2}\|\widehat{\zeta}^z\widehat{\cR}^z_{\lambda,t}\widehat{f}\|_{\mW^{1,q}_p([0,T]\times {\mR}^d_+)}\\
&&\qquad\qquad +\|\widehat{\zeta}^z\widehat{\cR}^z_{\lambda,t}\widehat{f}\|_{\mW^{2,q}_p([0,T]\times {\mR}^d_+)}+\|\partial_t(\widehat{\zeta}^z\widehat{\cR}^z_{\lambda,t}\widehat{f})\|_{\mL^q_p([0,T]\times {\mR}^d_+)}\bigg)\\
&&\leq C\Big(\lambda\|\widehat{\cR}^z_{\lambda,t}\Psi^{-1}[\eta^zf]\|_{\mL^q_p([0,T]\times {\mR}^d_+)}+\lambda^{1/2}\|\widehat{\cR}^z_{\lambda,t}\Psi^{-1}[\eta^zf]\|_{\mW^{1,q}_p([0,T]\times {\mR}^d_+)}\\
&\;&\qquad\qquad +\|\widehat{\cR}^z_{\lambda,t}\Psi^{-1}[\eta^zf]\|_{\mW^{2,q}_p([0,T]\times {\mR}^d_+)} +\|\partial_t(\widehat{\cR}^z_{\lambda,t}\Psi^{-1}[\eta^zf])\|_{\mL^q_p([0,T]\times {\mR}^d_+)}\Big)\\
&&\leq C\Big(\lambda\|\widehat{{\cR}}^z_{\lambda,t}\Psi^{-1}[\eta^zf]\|_{\mL^q_p([0,T]\times {\mR}^d_+)}+\lambda^{1/2}\|D\widehat{{\cR}}^z_{\lambda,t}\Psi^{-1}[\eta^zf]\|_{\mL^q_p([0,T]\times {\mR}^d_+)}\\
&\;&\qquad\qquad +\|D^2\widehat{{\cR}}^z_{\lambda,t}\Psi^{-1}[\eta^zf]\|_{\mL^q_p([0,T]\times {\mR}^d_+)} +\|\partial_t(\widehat{{\cR}}^z_{\lambda,t}\Psi^{-1}[\eta^zf])\|_{\mL^q_p([0,T]\times {\mR}^d_+)}\Big)\\
&&\leq C\|\Psi^{-1}[\eta^zf]\|_{\mL^q_p([0,T]\times\mR^d_+)}\leq C\|\eta^zf\|_{\mL^q_p([0,T]\times(B_{\rho_0}(z)\cap D))}\\
&&\leq C\|f\|_{\mL^q_p(([0,T])\times(B_{\rho_0}(z)\cap D))}.
\de
The theorem is then proved.
\end{proof}

\begin{lemma}\label{re3}If $f\in \mL^q_p([0,T]\times(B_{\rho_0}(z)\cap D))$, then
$$(\lambda-L^0_t)R^z_{\lambda,t}f=f \quad  \hbox{  on $[0,T]\times(B_{\rho_0/2}(z)\cap D)$}\,.
$$
\end{lemma}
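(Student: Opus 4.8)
The plan is to unwind the definitions and exploit that the cutoffs are identically $1$ on the smaller ball. By \eqref{0-1} we have $R^z_{\lambda,t}f=\Psi\widehat w$ where $\widehat w:=\widehat{\cR}^z_{\lambda,t}\bigl(\Psi^{-1}[\eta^z f]\bigr)$; here $\Psi^{-1}[\eta^z f]\in\mL^q_p((0,T)\times\mR^d_+)$ since $\psi^{\pm1}\in C^2$ and $\eta^z f\in\mL^q_p((0,T)\times(B_{\rho_0}(z)\cap D))$. Because $\widehat{\cR}^z_{\lambda,t}$ is, by the invertibility statement \eqref{l1}, the inverse of $\lambda-\widehat L^z_t$ on ${\mathring\mW}^{2,q}_p((0,T)\times\mR^d_+)$, the function $\widehat w$ belongs to ${\mathring\mW}^{2,q}_p((0,T)\times\mR^d_+)$ and solves $(\lambda-\widehat L^z_t)\widehat w=\Psi^{-1}[\eta^z f]$ on \emph{all} of $(0,T)\times\mR^d_+$. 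Thus the entire content of the lemma is a localization: on $B_{\rho_0/2}(z)\cap D$ this global equation must reduce to $(\lambda-L^0_t)\,\cdot\,=f$.

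First I would use that, by the choice of $\eta$, one has $\eta^z(t,x)=\eta(t,x-z)=1$ for $x\in B_{\rho_0/2}(z)$, hence $\widehat\eta^z\equiv1$ on the corresponding region $\psi(B_{\rho_0/2}(z)\cap D)$. Two consequences follow there: $\Psi^{-1}[\eta^z f]=\Psi^{-1}f=\widehat f$, and, from the definition \eqref{0-0} of $\widehat L^z_t$ as $\widehat\eta^z\widetilde L^z_t+(1-\widehat\eta^z)(\Delta+\partial_t)$, also $\widehat L^z_t=\widetilde L^z_t$. Therefore the equation for $\widehat w$ restricted to $(0,T)\times\psi(B_{\rho_0/2}(z)\cap D)$ becomes $(\lambda-\widetilde L^z_t)\widehat w=\widehat f$.

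Next I would transport this identity back to $x$-coordinates. Since $\widehat w\in\mW^{2,q}_p$, the change-of-variables formulas \eqref{e.3.7} apply to $\widehat w$ (by the limiting argument noted right after \eqref{e.3.7}), and together with the definition \eqref{2} of $\widetilde L^z_t$ and the identities \eqref{4} they yield $L^0_t(\Psi\widehat w)(t,x)=(\widetilde L^z_t\widehat w)(t,y)$ for $x=\psi^{-1}(y)\in B_{\rho_0/2}(z)\cap D$; this is precisely Lemma~\ref{le2}(ii) read on the sub-domain $B_{\rho_0/2}(z)\cap D$. Hence $(\lambda-L^0_t)R^z_{\lambda,t}f(t,x)=(\lambda-\widetilde L^z_t)\widehat w(t,y)=\widehat f(t,y)=f(t,x)$ on $(0,T)\times(B_{\rho_0/2}(z)\cap D)$, as claimed. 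I do not expect a genuine obstacle here; the only points needing a line of care are keeping track of the function spaces so that the pointwise PDE identities are legitimate in the $\mW^{2,q}_p$ sense (using Lemma~\ref{le1} for the invariance of these classes under $\Psi,\Psi^{-1}$) and observing that Lemma~\ref{le2}(ii), although stated on $B_{\rho_0}(z)\cap D$, is purely local and hence restricts freely to $B_{\rho_0/2}(z)\cap D$.
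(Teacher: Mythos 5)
Your proof is correct, and it follows a somewhat different route than the paper's. The paper sets $f_1:=R^z_{\lambda,t}f$, $f_2:=(\lambda-L^0_t)f_1$, applies the resolvent to get $\widehat f_1=\widehat{\cR}^z_{\lambda,t}\widehat f_2$ and, separately, $\widehat f_1=\widehat{\cR}^z_{\lambda,t}\Psi^{-1}[\eta^z f]$, and then concludes $f=f_2$ on $(0,T)\times(B_{\rho_0/2}(z)\cap D)$ by linearity/injectivity of $\widehat{\cR}^z_{\lambda,t}$, using $\eta^z f=f$ on $B_{\rho_0/2}(z)$. You instead argue ``forward'': since $\widehat{\cR}^z_{\lambda,t}$ is the inverse in \eqref{l1}, the function $\widehat w=\widehat{\cR}^z_{\lambda,t}\Psi^{-1}[\eta^z f]$ solves $(\lambda-\widehat L^z_t)\widehat w=\Psi^{-1}[\eta^z f]$ globally on $(0,T)\times\mR^d_+$, and you then localize using $\widehat\eta^z\equiv 1$ on $\psi(B_{\rho_0/2}(z)\cap D)$ (so that $\widehat L^z_t=\widetilde L^z_t$ and $\Psi^{-1}[\eta^z f]=\widehat f$ there) before pulling back with Lemma~\ref{le2}(ii). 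The ingredients are the same (invertibility of $\lambda-\widehat L^z_t$, the cutoff being $1$ on the half-ball, the coordinate-change lemma), but your localization-first structure is arguably cleaner: it avoids the paper's step $\widehat{\cR}^z_{\lambda,t}\Psi^{-1}[\eta^z f]=\widehat{\cR}^z_{\lambda,t}\widehat f$, which as literally written leans on the merely local identity $\eta^z f=f$ even though the resolvent is a nonlocal operator, whereas in your argument all identities are used only pointwise on the region where the cutoffs equal one. The only bookkeeping you rightly flag — that $\Psi^{-1}[\eta^z f]$ (extended by zero) lies in $\mL^q_p((0,T)\times\mR^d_+)$ and that Lemma~\ref{le2}(ii) restricts to the smaller set — is indeed all that is needed.
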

\begin{proof}
Denote $f_1:=R^z_{\lambda,t}f$, and $f_2:=(\lambda-L^0_t)f_1$.  It suffices to prove $f=f_2$ in $[0,T]\times(B_{\rho_0/2}(z)\cap D)$.
Since
\begin{equation*}
f\in \mL^q_p([0,T]\times(B_{\rho_0}(z)\cap D)) \Rightarrow f_1 \in \mW^{2,q}_p([0,T]\times(B_{\rho_0}(z)\cap D)),
\end{equation*}
we have  by Lemma \ref{le2} (ii)
\begin{equation*}
\widehat{f}_1\in \mW^{2,q}_p([0,T]\times D^z_+)
\quad\hbox{and} \quad \widehat{f}_2=(\lambda-\widehat{L}^z_t)\widehat{f}_1  \quad \hbox{in $[0,T]\times D^z_+$}\,.
\end{equation*}
Therefore, we have $\widehat{f}_1=(\lambda-\widehat{L}^z_t)^{-1}\widehat{f}_2=\widehat{\cR}^z_{\lambda,t}\widehat{f}_2$ in $[0,T]\times D^z_+$. On the other hand, by definition of $\eta^z$, we see  $\eta^zf=f$ in $[0,T]\times B_{\rho_0/2}(z)$,  and then we have
\ce
 \widehat{f}_1&=&\Psi^{-1}f_1=\Psi^{-1}R^z_{\lambda,t}f=\Psi^{-1}(\Psi\widehat{\cR}^z_{\lambda,t}\Psi^{-1}[\eta^zf])\\
&  =&\widehat{\cR}^z_{\lambda,t}\Psi^{-1}[\eta^zf]=\widehat{\cR}^z_{\lambda,t}\Psi^{-1}f=\widehat{\cR}^z_{\lambda,t}\widehat{f}.
\de
This yields
\ce
\widehat{\cR}^z_{\lambda,t}\widehat{f}_2=\widehat{\cR}^z_{\lambda,t}\widehat{f}.
\de
Since  $\widehat{\cR}^z_{\lambda,t}$ and $\Psi^{-1}$ are linear operators, we conclude  $f=f_2$ in $[0,T]\times(B_{\rho_0/2}(z)\cap D)$.
\end{proof}

\subsection{Piece together the neighborhoods of boundary points}
Take a function $\xi\in C^\infty_0( \mR^d)$ such that $0\leq\xi\leq1$,   and
\ce
\xi(x)=
\begin{cases}
0 &\hbox{$~~~when  ~~~  ~~|x|\geq\rho_0/2,  ~~~~$}\\
1 &\hbox{$~~~ when  ~~~  ~~|x|\leq\rho_0/4$}\,. \\
\end{cases}
\de
Next, take points $z_1,z_2,\ldots,z_n\in\partial D$ so that
\ce
\begin{cases}|z_i-z_j|\geq\rho_0/8\quad \hbox{for $i\neq j$  and }\\
\hbox{the boundary  $\partial D$ is covered by $B_{\rho_0/8}(z_i)$\,. }
\end{cases}
\de
This is possible because we assume that  $D$ is bounded  so that
 the number $n$ of the points $z_i$  can be chosen to be finite. Observe that $n$ can be estimated through $\rho_0,d$ and $\diam(D)$.

Denote
\begin{equation}
\xi^i( x)=\xi( x-z_i),~~i=1,2,\ldots,n.
\label{e.def_xii}
\end{equation}
These function are used to smooth a function near  the boundary $\partial D$ of the
domain $D$.
To  smooth a function inside the domain we introduce another function
$\xi^0\in C^\infty_0(  \mR^d)$ such that $0\leq\xi^0\leq1$,
\begin{equation}\label{sec3-eq00}
\left\{
                    \begin{array}{lll}
                     \xi^0( x)=0 ~~~for ~~~x\in D ~~~with ~~~dist(x,\partial D)\leq\rho_0/16,\\
                     \xi^0( x)=1 ~~~for ~~~x\in D ~~~with ~~~dist(x,\partial D)\geq\rho_0/8.
                    \end{array}
                    \right.
\end{equation}
This is possible, for instance, by mollifying the indicate function  of
\ce
 D\setminus\{x:dist(x,\partial D)\leq3\rho_0/32\} .
\de
Notice that
\ce
\sum_{i=1}^n(\xi^i( x))^2\geq1 ~~if ~~~x\in \overline{D}~~~and ~~~dist(x,\partial D)\leq\rho_0/8.
\de
Therefore,
\ce
\overline{\xi}:=\sum_{i=0}^n(\xi^i( x))^2\ge  1~~~in~~ \overline{D}.
\de
Moreover, $\overline{\xi}$ and its  derivatives of any order are
bounded in $\overline{D}$ by a constant  depending only on $d,n,\rho_0$, and the order of the derivative. Finally, define
\begin{equation}\label{zetai}
\zeta^i( x):=\xi^i\overline{\xi}^{-1/2},   ~~~i=0,1,\ldots,n.
\end{equation}
It is easy to see  that  all $\zeta^i$ are infinitely differentiable and
\ce
\sum_{i=0}^n(\zeta^i( x))^2=1\,,\quad \hbox{on
$[0,T]\times D$}\,.
\de
Denote
\ce
R^{(i)}_{\lambda,t}=R^{z_i}_{\lambda,t}, ~~i=1,\ldots,n
\de
for $\lambda\geq\lambda_0\geq1$ and $z_i\in\partial D$.
Let $R^{(0)}_{\lambda,t}$ be the inverse operator of  $(\lambda-L^0_t): \mW^{2,q}_p([0,T]\times\mR^d)\longrightarrow \mL^q_p([0,T]\times\mR^d)$.

We may increase $\lambda_0$ if needed   so that \cite[Theorem 7.2.8]{Krylov1} can  be   applied
(with $b=0$) to yield that for $\lambda\geq\lambda_0$
\begin{align}
\lambda\|R^{(0)}_{\lambda,t}f\|_{\mL^q_p([0,T]\times\mR^d)}&+\lambda^{1/2}\|R^{(0)}_{\lambda,t}f\|_{\mW^{1,q}_p([0,T]\times\mR^d)} +\|R^{(0)}_{\lambda,t}f\|_{\mW^{2,q}_p([0,T]\times\mR^d)}\nonumber \\
&+\|\partial_tR^{(0)}_{\lambda,t}f\|_{\mL^q_p([0,T]\times\mR^d)}\leq C\|f\|_{\mL^q_p([0,T]\times\mR^d)}\,. \label{eq12}
\end{align}
\begin{lemma}\label{le7} Let   $u\in{\mathring\mW}^{2,q}_p([0,T]\times D)$ and set  $
f=\lambda u-L^0_tu $. If $\lambda\geq\lambda_0$,  then in $[0,T]\times D$
\begin{equation}\label{7}
u  =\sum_{0\leq k\leq n}\zeta^kR^{(k)}_{\lambda,t}(\zeta^kf-L_{t,k}u ),
\end{equation}
where
\begin{equation}\label{li}
L_{t,k}u:=a^{ij}\zeta^k_{x^ix^j}u+2a^{ij}\partial_i\zeta^k\partial_ju.
\end{equation}
\end{lemma}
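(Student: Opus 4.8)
The plan is a localization-and-gluing argument resting on the partition of unity $\{\zeta^k\}_{0\le k\le n}$ and on Theorem \ref{th0}. The only algebraic input is the identity $\sum_{k=0}^n(\zeta^k)^2\equiv1$ on $\overline{D}$, which gives at once
\[
u=\sum_{k=0}^n(\zeta^k)^2\,u=\sum_{k=0}^n\zeta^k\,(\zeta^k u)\qquad\text{on }(0,T)\times D .
\]
Thus it suffices to establish, for each $k$, the localized identity $\zeta^k u=R^{(k)}_{\lambda,t}(\zeta^k f-L_{t,k}u)$ on a neighbourhood of $\supp\zeta^k$ inside $D$; multiplying by $\zeta^k$ and summing then yields \eqref{7}.

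First I would record the commutator identity. Each $\zeta^k=\zeta^k(x)$ is smooth with bounded derivatives of every order and does not depend on $t$, so the Leibniz rule together with the symmetry $a^{ij}=a^{ji}$ gives, for $v\in\mW^{2,q}_p$,
\[
L^0_t(\zeta^k v)=\zeta^k\,L^0_t v+L_{t,k}v ,
\]
with $L_{t,k}$ the first-order operator of \eqref{li} (no $\partial_t\zeta^k$-term appears, precisely because $\zeta^k$ is time-independent). Taking $v=u$ and using $f=\lambda u-L^0_t u$ gives $(\lambda-L^0_t)(\zeta^k u)=\zeta^k f-L_{t,k}u$. One also notes that the right-hand side is admissible for $R^{(k)}_{\lambda,t}$: since $u\in\mathring\mW^{2,q}_p$ one has $u,\nabla u,\nabla^2 u,\partial_t u\in\mL^q_p$, and as $a^{ij}$ and the derivatives of $\zeta^k$ are bounded, both $\zeta^k f$ and $L_{t,k}u$ lie in $\mL^q_p((0,T)\times D)$.

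Next I would treat the two kinds of cutoffs separately. For $1\le k\le n$, the function $\zeta^k u$ belongs to $\mathring\mW^{2,q}_p((0,T)\times D)$ — it vanishes on $\partial D$ since $u$ does — and is supported in $B_{\rho_0/2}(z_k)\cap D$; hence Theorem \ref{th0}(i), applied with $z=z_k$ to $\zeta^k u$, yields
\[
\zeta^k u=R^{(k)}_{\lambda,t}(\lambda-L^0_t)(\zeta^k u)=R^{(k)}_{\lambda,t}(\zeta^k f-L_{t,k}u)\quad\text{on }(0,T)\times(B_{\rho_0}(z_k)\cap D),
\]
a set containing $\supp\zeta^k$. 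For $k=0$, the function $\zeta^0 u$ is compactly supported inside $D$ (its support is at distance $\ge\rho_0/16$ from $\partial D$), so extending it by $0$ produces an element of $\mW^{2,q}_p((0,T)\times\RR^d)$; since for $\lambda\ge\lambda_0$ the operator $\lambda-L^0_t:\mW^{2,q}_p((0,T)\times\RR^d)\to\mL^q_p((0,T)\times\RR^d)$ is invertible with inverse $R^{(0)}_{\lambda,t}$ — this is \eqref{eq12}, i.e.\ \cite[Theorem 7.2.8]{Krylov1} with $b=0$, after extending $a^{ij}$ to $\RR^d$ via Proposition \ref{prop6.01} — we get $\zeta^0 u=R^{(0)}_{\lambda,t}(\zeta^0 f-L_{t,0}u)$ on $\RR^d$, in particular on $D$. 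Multiplying each identity by $\zeta^k$ and summing over $0\le k\le n$ gives $u=\sum_{k=0}^n\zeta^k R^{(k)}_{\lambda,t}(\zeta^k f-L_{t,k}u)$ on $(0,T)\times D$, which is \eqref{7}.

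I do not expect a genuine obstacle: the analytic work is already contained in Theorem \ref{th0} and in the whole-space solvability estimate \eqref{eq12}, and what remains is assembly. The one point that needs care is the bookkeeping of supports — making sure $\zeta^k u$ is supported well inside the coordinate patch $B_{\rho_0/2}(z_k)$ for $1\le k\le n$, so that Theorem \ref{th0}(i) applies and the equality it produces (valid on $B_{\rho_0}(z_k)\cap D$) covers $\supp\zeta^k$, and compactly inside $D$ for $k=0$, so that the whole-space inverse applies — together with checking that the commutator is computed with $\zeta^k$ genuinely $t$-independent, so that $L^0_t(\zeta^k u)$ produces exactly the first-order term $L_{t,k}u$ of \eqref{li}.
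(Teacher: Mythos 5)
Your proposal is correct and follows essentially the same route as the paper's proof: write $u=\sum_k\zeta^k(\zeta^k u)$ using $\sum_k(\zeta^k)^2=1$, use the commutator identity $(\lambda-L^0_t)(\zeta^k u)=\zeta^k f-L_{t,k}u$, and invert via Theorem \ref{th0}(i) for the boundary cutoffs and via the whole-space operator $R^{(0)}_{\lambda,t}$ for $\zeta^0u$. Your extra care with supports and membership in $\mL^q_p$ only makes explicit what the paper leaves implicit.
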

\begin{proof} Since $\zeta^{0}u\in\mW^{2,q}_p([0,T]\times \mR^d)$ and by the definition of $R^{(0)}_{\lambda,t}$,  we have
\ce
\zeta^0u=R^{(0)}_{\lambda,t}(\lambda(\zeta^{0}u)-L^0_t(\zeta^{0}u))\,.
\de
Thus by Theorem \ref{th0} (i)  we have
\ce
u&=&u\sum_{k=0}^n(\zeta^k( x))^2=\sum_{k=0}^n\zeta^k(\zeta^ku)=\sum_{k=0}^n\zeta^kR^{(k)}_{\lambda,t}((\lambda-L^0_t)\zeta^ku)\\
&=&\sum_{k=0}^n\zeta^kR^{(k)}_{\lambda,t}(\lambda(\zeta^ku)-\zeta^kL^0_tu-L_{t,k}u)
=\sum_{k=0}^n\zeta^kR^{(k)}_{\lambda,t}(\zeta^kf-L_{t,k}u).
\de
This completes  the proof.
\end{proof}
\subsection{Zero drift term and zero Dirichlet boundary condition}
Now we turn   to study
\begin{equation}\label{9}
\lambda u-L^0_tu=f
\end{equation}
in ${\mathring\mW}^{2,q}_p([0,T]\times D)$.
  Instead of solving this  equation directly,  we shall   solve equation (\ref{7}), which is equivalent to \eqref{9}. In fact,  by Lemma \ref{le7} we see that if $u$ satisfies \eqref{9}, then
  it satisfies  \eqref{7}.   In next lemma we prove the opposite part of Lemma \ref{le7}, namely,    any solution of equation (\ref{7}) is indeed a solution of (\ref{9}).

\begin{lemma}\label{le10}  Let  $\lambda\geq\lambda_0$
      and let  $f\in \mL^q_p([0,T]\times D)$,  where $\lambda_0$ is the one in \cite[Theorem 7.2.8]{Krylov1}.  If $u\in{\mathring\mW}^{1,q}_p([0,T]\times D)$ is a solution of (\ref{7}), then $u\in{\mathring\mW}^{2,q}_p([0,T]\times D)$. Furthermore, there exists a constant $\lambda_1\geq (1\vee \lambda_0)$, depending only on $d,p,q,\alpha,\kappa,\rho_0$ and $\diam(D)$, such that  for all
$\lambda\ge \lambda_1$, the solution $u$ of
  \eqref{7} satisfies (\ref{9}) in $[0,T]\times D$.
\end{lemma}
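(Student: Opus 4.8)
The statement has two parts, which I would treat in turn. First, I claim any solution $u\in\mathring{\mW}^{1,q}_p((0,T)\times D)$ of \eqref{7} already lies in $\mathring{\mW}^{2,q}_p((0,T)\times D)$, by a one-step bootstrap. Since $u\in\mathring{\mW}^{1,q}_p$, the coefficients $a^{ij}$ are bounded on $D$ by $({\bf H}^\sigma)$, and the cutoffs $\zeta^k$ from \eqref{zetai} are smooth, compactly supported and have bounded derivatives, the function $L_{t,k}u$ defined by \eqref{li} lies in $\mL^q_p((0,T)\times D)$; hence so does each source $\zeta^k f-L_{t,k}u$. Then Theorem \ref{th0}(ii) (for $k=1,\dots,n$) and the global estimate \eqref{eq12} (for $k=0$) put every summand $\zeta^k R^{(k)}_{\lambda,t}(\zeta^k f-L_{t,k}u)$ of \eqref{7} in $\mathring{\mW}^{2,q}_p((0,T)\times D)$ — for $k\ge 1$ after extension by zero outside $B_{\rho_0/2}(z_k)$, using that $\zeta^k$ is supported there and that the ring-space at $z_k$ already encodes vanishing on $\partial D$; for $k=0$ because $\zeta^0$ vanishes near $\partial D$. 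Summing the summands and invoking \eqref{7} gives $u\in\mathring{\mW}^{2,q}_p((0,T)\times D)$.

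For the second part, I would set $v:=\lambda u-L^0_tu-f\in\mL^q_p((0,T)\times D)$ and feed $u$ — now known to be in $\mathring{\mW}^{2,q}_p$ — into Lemma \ref{le7} with right-hand side $\widetilde f:=\lambda u-L^0_tu$; subtracting \eqref{7} from the representation of $u$ so obtained leaves the identity
\begin{equation*}
\sum_{0\le k\le n}\zeta^k R^{(k)}_{\lambda,t}(\zeta^k v)=0 .
\end{equation*}
Every summand here is again in $\mathring{\mW}^{2,q}_p$ (Theorem \ref{th0}(ii)/\eqref{eq12}, since $\zeta^k v\in\mL^q_p$), so I may apply $\lambda-L^0_t$ to this identity. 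Using the commutator relation $(\lambda-L^0_t)(\zeta^k w)=\zeta^k(\lambda-L^0_t)w-L_{t,k}w$ together with the local inversion $(\lambda-L^0_t)R^{(k)}_{\lambda,t}g=g$ on $(0,T)\times(B_{\rho_0/2}(z_k)\cap D)$ from Lemma \ref{re3} for $k\ge1$ (which covers the support of $\zeta^k$), the global inversion $(\lambda-L^0_t)R^{(0)}_{\lambda,t}g=g$ on $\RR^d$ for $k=0$, and the partition $\sum_k(\zeta^k)^2\equiv1$ on $D$, the identity collapses to the fixed-point equation
\begin{equation*}
v=\sum_{0\le k\le n}L_{t,k}\bigl(R^{(k)}_{\lambda,t}(\zeta^k v)\bigr)\qquad\text{in }\mL^q_p((0,T)\times D).
\end{equation*}
It then remains to show the right-hand side is a strict contraction in $v$ for $\lambda$ large, which forces $v\equiv0$, i.e.\ \eqref{9}. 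As $L_{t,k}$ is first order with bounded coefficients and $L_{t,k}(R^{(k)}_{\lambda,t}(\zeta^k v))$ is supported on the compact set carrying $\nabla\zeta^k$ and $\nabla^2\zeta^k$, I would insert an auxiliary cutoff $\widetilde\zeta^k\in C^\infty_0$ equal to $1$ on a neighborhood of that set and supported compactly inside $B_{\rho_0}(z_k)$ (for $k=0$ none is needed, since $R^{(0)}_{\lambda,t}$ is defined on all of $\RR^d$), and estimate
\begin{align*}
\|L_{t,k}(R^{(k)}_{\lambda,t}(\zeta^k v))\|_{\mL^q_p((0,T)\times D)}&\le C\,\|\widetilde\zeta^k R^{(k)}_{\lambda,t}(\zeta^k v)\|_{\mW^{1,q}_p((0,T)\times(B_{\rho_0}(z_k)\cap D))}\\
&\le C\lambda^{-1/2}\|v\|_{\mL^q_p((0,T)\times D)},
\end{align*}
the last step being the $\lambda^{1/2}$-gain of \eqref{5} (resp.\ \eqref{eq12}) with source $\zeta^k v$, together with the observation that the proof of \eqref{5} through Lemma \ref{le1} goes through verbatim with $\widetilde\zeta^k$ in place of $\zeta^z$. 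Summing over the $n+1$ charts (and $n$ is controlled by $d,\rho_0,\diam(D)$) gives $\|v\|_{\mL^q_p}\le C(n+1)\lambda^{-1/2}\|v\|_{\mL^q_p}$, so any $\lambda_1\ge(1\vee\lambda_0)$ with $C(n+1)\lambda_1^{-1/2}<1$ does the job.

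\textbf{The main obstacle} I anticipate is exactly this contraction estimate — extracting a genuine factor $\lambda^{-1/2}$ from $\|L_{t,k}(R^{(k)}_{\lambda,t}(\zeta^k v))\|_{\mL^q_p}$. The bound \eqref{5} controls $\zeta^k R^{(k)}_{\lambda,t}(\zeta^k v)$ and not $R^{(k)}_{\lambda,t}(\zeta^k v)$ alone, while $L_{t,k}$ involves $\nabla\zeta^k\cdot\nabla R^{(k)}_{\lambda,t}(\zeta^k v)$, which is not $\nabla(\zeta^k R^{(k)}_{\lambda,t}(\zeta^k v))$, and on the support of $\nabla\zeta^k$ the factor $\zeta^k$ may vanish so one cannot divide by it; this is what forces the auxiliary cutoff $\widetilde\zeta^k$, together with a (routine but necessary) check that the localization apparatus of Lemma \ref{le1} and Theorem \ref{th0} — in particular the Sobolev-norm equivalence under the boundary-flattening diffeomorphism $\psi^{z_k}$ — still applies to $\widetilde\zeta^k$. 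Everything else — the commutator computation and the bookkeeping keeping each summand in $\mathring{\mW}^{2,q}_p((0,T)\times D)$ — is routine.
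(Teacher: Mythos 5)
Your proposal is correct and follows essentially the same route as the paper: the same bootstrap via Theorem \ref{th0}(ii)/\eqref{eq12} to get $u\in\mathring{\mW}^{2,q}_p$, and then the same commutator computation reducing matters to the fixed-point identity $v=\sum_k L_{t,k}\bigl(R^{(k)}_{\lambda,t}(\zeta^k v)\bigr)$ (the paper's $\upnu=T_\lambda\upnu$), killed by a contraction estimate for large $\lambda$. Your auxiliary cutoff $\widetilde\zeta^k$ and the resulting $\lambda^{-1/2}$ gain address honestly a point the paper passes over silently (it bounds $\|T_\lambda\upnu\|$ by $C\lambda^{-1}\|\upnu\|$ even though the published estimate \eqref{5} only controls $\zeta^k R^{(k)}_{\lambda,t}$, not $R^{(k)}_{\lambda,t}$ on the support of $\nabla\zeta^k$), and either rate suffices for the contraction.
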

\begin{proof} Let $u\in{\mathring\mW}^{1,q}_p([0,T]\times D)$ be  a solution of (\ref{7}).
Owing to the  fact that $L_{t,i}$, $(i=1,\ldots,n)$
defined by (\ref{li}) are  first order differentiable operators, we have
\ce
\zeta^if-L_{t,i}u\in \mL^q_p([0,T]\times D).
\de
Hence by Lemma \ref{re3}
\ce
\zeta^iR^{(i)}_{\lambda,t}(\zeta^if-L_{t,i}u)\in {\mathring\mW}^{2,q}_p([0,T]\times D).
\de
From  the expression   (\ref{7}) for $u$ it is easy to see
that $u\in{\mathring\mW}^{2,q}_p([0,T]\times D)$.

Next, denote $h:=\lambda u-L^0_tu$.    To complete  the proof, we only need   to
show that as long as  $\lambda$ is sufficiently large, then $f=h$.
By Lemma \ref{le7},  $u=\sum_{0\leq i\leq n}\zeta^iR^{(i)}_{\lambda,t}(\zeta^ih-L_{t,i}u)$.
On the other hand,  $u$ is a solution of (\ref{7}),  ie,  $u=\sum_{0\leq i\leq n}\zeta^iR^{(i)}_{\lambda,t}(\zeta^if-L_{t,i}u)$.

Thus, we have
\ce
0=\sum_{0\leq i\leq n}\zeta^iR^{(i)}_{\lambda,t}(\zeta^if-L_{t,i}u)-\sum_{0\leq i\leq n}\zeta^iR^{(i)}_{\lambda,t}(\zeta^ih-L_{t,i}u)=\sum_{0\leq i\leq n}\zeta^iR^{(i)}_{\lambda,t}(\zeta^i(f-h))\,.
\de
This means that   ${\bf R}_{\lambda,t}(f-h)=0$, where
\ce
{\bf R}_{\lambda,t}f=\sum_{0\leq i\leq n}\zeta^iR^{(i)}_{\lambda,t}(\zeta^if)\,,
\de
which is an operator associated with  the operator $\lambda  -L^0_t$
(It is a regularizer    of the inverse of $\lambda  -L^0_t$).
  Thus, we only need to show  the following:
\begin{equation}
\begin{split}
&\hbox{If   $\upnu\in L^q_p([0,T]\times D)$  and if  ${\bf R}_{\lambda,t}(\upnu)=0$ for   $\lambda\ge \lambda_0$,}\\
&\qquad \qquad \hbox{  then $\upnu=0$ in $[0,T]\times D$\,.
}\end{split}
\label{e.critical1}
\end{equation}
This is the objective of the following  paragraphs.
By Lemma \ref{re3} we have
\ce
(\lambda-L^0_t)R^{(i)}_{\lambda,t}(\zeta^i)=\zeta^i
\de
on the set in $(0,T)\times D$,   where $\zeta^i $ is defined by (\ref{zetai}). Therefore, if    ${\bf R}_{\lambda,t}(\upnu)=0$
then we have
\ce
0& =&(\lambda-L^0_t){\bf R}_{\lambda,t}(\upnu)=\lambda{\bf R}_{\lambda,t}(\upnu)-\sum_{0\leq i\leq n}L^0_t(\zeta^iR^{(i)}_{\lambda,t}(\zeta^i(\upnu)))\\
& =&\sum_{0\leq i\leq n}\lambda(\zeta^iR^{(i)}_{\lambda,t}(\zeta^i(\upnu)))-\left(\sum_{0\leq i\leq n}\zeta^i(L^0_tR^{(i)}_{\lambda,t}(\zeta^i(\upnu)))+\sum_{0\leq i\leq n}L_{t,i}R^{(i)}_{\lambda,t}(\zeta^i(\upnu))\right)\\
& =&\sum_{0\leq i\leq n}\zeta^i(\lambda-L^0_t)R^{(i)}_{\lambda,t}(\zeta^i(\upnu))-\sum_{0\leq i\leq n}L_{t,i}R^{(i)}_{\lambda,t}(\zeta^i(\upnu))\\
& =&\sum_{0\leq i\leq n}(\zeta^i)^2(\upnu)-\sum_{0\leq i\leq n}L_{t,i}R^{(i)}_{\lambda,t}(\zeta^i
(\upnu)):=\upnu-T_\lambda (\upnu),
\de
where
\ce
T_\lambda(\upnu):=\sum_{0\leq i\leq n}L_{t,i}R^{(i)}_{\lambda,t}(\zeta^i(\upnu)).
\de
To finish the proof  of \eqref{e.critical1}, it suffices to show that for sufficiently large
$\lambda$  the operator $T_\lambda$ is a contraction  operator in $\mL^q_p((0,T)\times D)$. In fact, from
\ce
L_{t,k}u:=a^{ij}\zeta^k_{x^ix^j}u+2a^{ij}\partial_i\zeta^k\partial_ju
\de
 and  from (\ref{eq12}) and Theorem \ref{th0} (ii) it follows that
\ce
 \|T_\lambda \upnu\|_{\mL^q_p([0,T]\times D)}
&    \leq& C \lambda^{-1}\sum^n_{i=1}\|\zeta^i(\upnu)\|_{\mL^q_p([0,T]\times(B_{\rho_0}(z_i)\cap D))} \\
& &+ C\lambda^{-1}\|\zeta^0(\upnu)\|_{\mL^q_p([0,T]\times D)}\\
&\leq  &  C\lambda^{-1}\|\upnu\|_{\mL^q_p([0,T]\times D)}\,.
\de
Since the above  constant $C$ does not depend on $\lambda$, we may choose   $\lambda$ sufficiently large so   that $C \lambda^{-1}<1$. This means that
$T_\lambda$ is a contraction since $T_\lambda$ is a linear operator.
Thus $\upnu=0$ in $[0,T]\times D$  and
\eqref{e.critical1} is proved. The proof of lemma is   then completed.
\end{proof}

The following lemma will show that  the equation (\ref{7})  has a unique solution
satisfying some desired estimate.
\begin{lemma}\label{le11} There exists a constant $\lambda_1\geq1$, depending only on $d,p,q,\alpha,\kappa,\rho_0$ and $\diam(D)$, such that for any   $f\in \mL^q_p([0,T]\times D)$, there exists a unique solution $u\in{\mathring\mW}^{1,q}_p([0,T]\times D)$ of equation (\ref{7})
(for any fixed $\lambda\geq\lambda_1$). Furthermore, this solution satisfies
\begin{equation}\label{13}
\lambda\|u\|_{\mL^q_p([0,T]\times D)}+\lambda^{1/2}\|u\|_{\mW^{1,q}_p([0,T]\times D)}+\|\partial_tu\|_{\mL^q_p([0,T]\times D)}\leq C\|f\|_{\mL^q_p([0,T]\times D)}\,,
\end{equation}
where $C$ depending only on $d,p,q,\alpha,\kappa,\rho_0$ and $\diam(D)$.
\end{lemma}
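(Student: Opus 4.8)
The plan is to recast equation \eqref{7} as an affine fixed-point problem on the Banach space ${\mathring\mW}^{1,q}_p((0,T)\times D)$ and solve it by the contraction mapping principle once $\lambda$ is large. Write
\[
\Phi(u):=\Phi_0+Ku,\qquad \Phi_0:=\sum_{0\le k\le n}\zeta^k R^{(k)}_{\lambda,t}(\zeta^k f),\qquad Ku:=-\sum_{0\le k\le n}\zeta^k R^{(k)}_{\lambda,t}(L_{t,k}u),
\]
so that $u$ solves \eqref{7} if and only if $u=\Phi(u)$. First I would check that $\Phi$ maps ${\mathring\mW}^{1,q}_p((0,T)\times D)$ into itself. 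Indeed, since $a^{ij}$ is bounded on $D$ (the upper ellipticity bound in $(\mathbf H^\sigma)$) and the cut-offs $\zeta^k$ from \eqref{zetai} have bounded derivatives, the first-order operator $L_{t,k}$ defined in \eqref{li} sends $\mW^{1,q}_p$ into $\mL^q_p((0,T)\times D)$; then Theorem \ref{th0}(ii) (for $1\le k\le n$) and \eqref{eq12} (for $k=0$) show that each $\zeta^k R^{(k)}_{\lambda,t}(\cdot)$ carries $\mL^q_p$-data into ${\mathring\mW}^{2,q}_p((0,T)\times D)$ — for $k=0$ because $\zeta^0$ vanishes near $\partial D$, and for $k\ge1$ because of the Dirichlet behaviour of $\zeta^{z_k}R^{z_k}_{\lambda,t}f$ together with the support of $\zeta^{z_k}$. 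Hence $\Phi(u)\in{\mathring\mW}^{2,q}_p\subseteq{\mathring\mW}^{1,q}_p$.

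Next I would show that the linear map $K$ is a contraction for $\lambda$ large. Because $L_{t,k}$ is first order with bounded coefficients, $\|L_{t,k}u\|_{\mL^q_p((0,T)\times D)}\le C\|u\|_{\mW^{1,q}_p((0,T)\times D)}$ with $C$ depending only on $d,\kappa,\rho_0$ and independent of $\lambda$. Combining this with the $\lambda^{1/2}$-weighted $\mW^{1,q}_p$-bound in Theorem \ref{th0}(ii) and in \eqref{eq12}, and summing over the finitely many indices $k=0,1,\dots,n$ (whose number $n$ is controlled by $\rho_0,d,\diam(D)$), one gets
\[
\|Ku\|_{\mW^{1,q}_p((0,T)\times D)}\le \sum_{0\le k\le n}C\lambda^{-1/2}\|L_{t,k}u\|_{\mL^q_p((0,T)\times D)}\le C\lambda^{-1/2}\|u\|_{\mW^{1,q}_p((0,T)\times D)},
\]
with $C=C(d,p,q,\alpha,\kappa,\rho_0,\diam(D))$ independent of $\lambda$. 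Choosing $\lambda_1\ge 1\vee\lambda_0$ so large that $C\lambda_1^{-1/2}\le 1/2$, the map $K$ is a $\tfrac12$-contraction on ${\mathring\mW}^{1,q}_p((0,T)\times D)$ for every $\lambda\ge\lambda_1$; since $\Phi_0\in{\mathring\mW}^{1,q}_p$, the Banach fixed-point theorem yields a unique $u\in{\mathring\mW}^{1,q}_p((0,T)\times D)$ with $u=\Phi(u)$, i.e.\ a unique solution of \eqref{7}.

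For the estimate \eqref{13} I would argue by absorption. By Lemma \ref{le10} the solution $u$ of \eqref{7} in fact belongs to ${\mathring\mW}^{2,q}_p((0,T)\times D)$, so the quantity $N_\lambda(u):=\lambda\|u\|_{\mL^q_p((0,T)\times D)}+\lambda^{1/2}\|u\|_{\mW^{1,q}_p((0,T)\times D)}+\|\partial_t u\|_{\mL^q_p((0,T)\times D)}$ is finite. Applying the full estimates of Theorem \ref{th0}(ii) and \eqref{eq12} to every summand of $u=\Phi_0+Ku$ gives $N_\lambda(\Phi_0)\le C\sum_k\|\zeta^k f\|_{\mL^q_p}\le C\|f\|_{\mL^q_p}$ and, using $\|u\|_{\mL^q_p}\le\lambda^{-1}N_\lambda(u)$ and $\|\nabla u\|_{\mL^q_p}\le\|u\|_{\mW^{1,q}_p}\le\lambda^{-1/2}N_\lambda(u)$ for $\lambda\ge1$, also $N_\lambda(Ku)\le C\sum_k\|L_{t,k}u\|_{\mL^q_p}\le C\lambda^{-1/2}N_\lambda(u)$. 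Hence $N_\lambda(u)\le C\|f\|_{\mL^q_p}+C\lambda^{-1/2}N_\lambda(u)$, and after enlarging $\lambda_1$ if necessary so that $C\lambda_1^{-1/2}\le 1/2$, the last term is absorbed, giving $N_\lambda(u)\le 2C\|f\|_{\mL^q_p((0,T)\times D)}$, which is precisely \eqref{13}.

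The main obstacle is the contraction step: one must be sure the gain of the factor $\lambda^{-1/2}$ in $K$ is genuine, that is, that the bounded-coefficient, first-order nature of $L_{t,k}$ matches exactly the $\lambda^{1/2}$-weighting of the first-order norm in the localized resolvent estimates of Theorem \ref{th0}(ii) and \eqref{eq12}, and that all constants there are independent of $\lambda$ and of the data. This is what allows trading a large $\lambda$ for a small operator norm. Everything else — the mapping property, the a priori ${\mathring\mW}^{2,q}_p$-regularity supplied by Lemma \ref{le10}, and the absorption yielding \eqref{13} — is routine once this quantitative point is secured.
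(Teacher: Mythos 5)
Your proposal is correct and follows essentially the same route as the paper: the paper also solves \eqref{7} by viewing $v\mapsto\sum_{0\le k\le n}\zeta^kR^{(k)}_{\lambda,t}(\zeta^kf-L_{t,k}v)$ as an affine map on ${\mathring\mW}^{1,q}_p((0,T)\times D)$, uses Theorem \ref{th0}(ii) and \eqref{eq12} to get the $\lambda^{1/2}$-weighted bound that makes it a contraction for $\lambda$ large, and then obtains \eqref{13} by inserting $v=u$ and absorbing, exactly as in your final step. Your write-up merely makes explicit the absorption and the mapping into ${\mathring\mW}^{2,q}_p$, which the paper leaves as "straightforward."
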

\begin{proof}Define
\ce
F(v):=\sum_{0\leq i\leq n}\zeta^iR^{(i)}_{\lambda,t}(\zeta^if-L_{t,i}v)
\de
and let $u=F(v)$ for $v\in{\mathring\mW}^{1,q}_p([0,T]\times D)$. \\
Thus we have $u\in{\mathring\mW}^{1,q}_p([0,T]\times D)$. Owing to Theorem \ref{th0} (ii)  we have
\cen
\lambda\|u\|_{\mL^q_p([0,T]\times D)}&\leq&\sum_{0\leq i\leq n}\lambda\|\zeta^iR^{(i)}_{\lambda,t}(\zeta^if-L_{t,i}v)\|_{\mL^q_p([0,T]\times D)}\nonumber \\
&\leq&C\sum_{0\leq i\leq n}\|\zeta^if-L_{t,i}v\|_{\mL^q_p([0,T]\times D)}
\nonumber \\
&\leq& C\Big(\|f\|_{\mL^q_p([0,T]\times D)}+\|v\|_{\mW^{1,q}_p([0,T]\times D)}\Big).\label{e.l38_1}
\den
In the same way we can obtain
\cen
\lambda^{1/2}\|u\|_{\mW^{1,q}_p([0,T]\times D)}\leq C\Big(\|f\|_{\mL^q_p([0,T]\times D)}+\|v\|_{\mW^{1,q}_p([0,T]\times D)}\Big).\label{e.l38_2}
\den
Furthermore,  we have also
\cen
&&\|\partial_tu\|_{\mL^q_p([0,T]\times D)}=\Big\|\partial_t\left(\sum_{0\leq i\leq n}\zeta^iR^{(i)}_{\lambda,t}(\zeta^if-L_{t,i}v)\right)\Big \|_{\mL^q_p([0,T]\times D)}\nonumber\\
&&\qquad\qquad \leq  \sum_{1\leq i\leq n}\|\partial_t(\zeta^iR^{(i)}_{\lambda,t}(\zeta^if-L_{t,i}v)\|_{\mL^q_p([0,T]\times D)}\nonumber\\
&&\qquad\qquad \qquad +\|\zeta^0\partial_t(R^{(0)}_{\lambda,t}(\zeta^0f-L_{t,0}v))\|_{\mL^q_p([0,T]\times D)}
\nonumber \\
&&\qquad\qquad\qquad   +\|(\partial_t\zeta^0)R^{(0)}_{\lambda,t}(\zeta^0f-L_{t,0}v)\|_{\mL^q_p([0,T]\times D)}
\nonumber \\
&&\qquad\qquad\leq C\sum_{0\leq i\leq n}\|\zeta^if-L_{t,i}v\|_{\mL^q_p([0,T]\times D)}\nonumber\\
&&\qquad\qquad \leq C\Big(\|f\|_{\mL^q_p([0,T]\times D)}+\|v\|_{\mW^{1,q}_p([0,T]\times D)}\Big).
\label{e.l38_3}
\den
These three estimates
\eqref{e.l38_1}-\eqref{e.l38_3} yield
\cen
&&\lambda\|u\|_{\mL^q_p([0,T]\times D)}+\lambda^{1/2}\|u\|_{\mW^{1,q}_p([0,T]\times D)}+\|\partial_tu\|_{\mL^q_p([0,T]\times D)}\nonumber \\
&&\qquad\qquad\leq C\left(\|f\|_{\mL^q_p([0,T]\times D)}+\|v\|_{\mW^{1,q}_p([0,T]\times D)}\right).
\label{3.27}
\den
Let  $u_1=F(v_1)$ and $u_2=F(v_2)$. By the Affinity of $F$ and the  above inequality (\ref{3.27}) we have
\ce
\begin{split}
\lambda\|F(v_1-v_2)\|_{\mL^q_p([0,T]\times D)}&+\lambda^{1/2}\|F(v_1-v_2)\|_{\mW^{1,q}_p([0,T]\times D)}\\
&+\|\partial_tF(v_1-v_2)\|_{\mL^q_p([0,T]\times D)}\leq C\|v_1-v_2\|_{\mW^{1,q}_p([0,T]\times D)}.
\end{split}
\de
\\
Take  $\lambda\geq\lambda_1$ large enough so that $C\lambda^{-1}<1$  and   we see that $F(v)$ is a  contraction. Thus   $F(u)=u$ has a unique fixed point and this implies that equation (\ref{7}) admits  a unique solution $u\in{\mathring\mW}^{1,q}_p([0,T]\times D)$.
The inequality \eqref{13} follows straightforward from  the inequality \eqref{3.27}
with $v$ replaced by $u$.
\end{proof}

\subsection{Zero drift term and general  Dirichlet boundary condition}
\begin{theorem}\label{th12} Let  $\lambda_1$ be sufficiently large and let $\partial D\in C^2$. Assume that $\sigma$ satisfies $({\bf H}^\sigma)$.
Then for any $\lambda\geq\lambda_1$, $f\in \mL^q_p([0,T]\times D)$ and $g\in\mW^{2,q}_p([0,T]\times  D)$, there exists a unique solution $u_0\in\mW^{2,q}_p([0,T]\times D)$ to  the equation
\begin{equation}\label{14.0}
\lambda u_0-L^0_tu_0=f
\end{equation}
in $(0,T)\times D$  such that $u_0=g$ on $(0,T)\times\partial D$, and
\begin{equation}\label{14}
\begin{split}
\lambda\|u_0-g\|_{\mL^q_p([0,T]\times D)}
&+\lambda^{1/2}\|u_0-g\|_{\mW^{2,q}_p([0,T]\times D)}+\|\partial_t(u_0-g)\|_{\mL^q_p([0,T]\times D)}\\
&\leq C\left(\|f\|_{\mL^q_p([0,T]\times D)}+  C_{ \lambda}  \|g\|_{\mW^{2,q}_p([0,T]\times D)}\right),
\end{split}
\end{equation}
where $C$ depending only on $d,p,q,\alpha,\kappa,\rho_0$ and $\diam(D)$ and is independent of
$\lambda$,
and  $C_{\lambda} $ depends only on $\lambda$.
\end{theorem}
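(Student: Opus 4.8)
The plan is to reduce the inhomogeneous Dirichlet problem \eqref{14.0} to the homogeneous one already solved in Lemmas \ref{le10}--\ref{le11} by subtracting off the boundary datum $g$, and then to read off the estimate \eqref{14} from \eqref{13} together with the localized second‑order bounds of Theorem \ref{th0}(ii) and \eqref{eq12}.

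Concretely, I would first set $\tilde f := f - \lambda g + L^0_t g$ and check that $\tilde f \in \mL^q_p((0,T)\times D)$: since $g\in\mW^{2,q}_p((0,T)\times D)$ we have $\partial_t g\in\mL^q_p$ and $\nabla^2 g\in\mL^q_p$, and $L^0_t g=\partial_t g+\tfrac12 a^{ij}g_{x^ix^j}\in\mL^q_p$ because $a^{ij}=\sum_k\sigma^{ik}\sigma^{jk}$ is bounded by a constant depending only on $\kappa$ by $({\bf H}^\sigma)$; moreover $\|\tilde f\|_{\mL^q_p((0,T)\times D)}\le\|f\|_{\mL^q_p((0,T)\times D)}+C_\lambda\|g\|_{\mW^{2,q}_p((0,T)\times D)}$ with $C_\lambda$ depending only on $\lambda$ (and the fixed structural data). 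Then, for $\lambda\ge\lambda_1$ with $\lambda_1$ the constant produced by Lemma \ref{le11}, I would invoke Lemma \ref{le11} with right‑hand side $\tilde f$ to obtain a unique $w\in\mathring{\mW}^{1,q}_p((0,T)\times D)$ solving the fixed‑point equation \eqref{7}, and then Lemma \ref{le10} to upgrade this to $w\in\mathring{\mW}^{2,q}_p((0,T)\times D)$ with $\lambda w-L^0_tw=\tilde f$ in $(0,T)\times D$. Setting $u_0:=w+g$, one has $u_0\in\mW^{2,q}_p((0,T)\times D)$, $u_0-g=w\in\mathring{\mW}^{2,q}_p((0,T)\times D)$ (which is exactly the meaning of $u_0=g$ on $(0,T)\times\partial D$ fixed in Section \ref{s.2}), and $\lambda u_0-L^0_tu_0=(\lambda w-L^0_tw)+(\lambda g-L^0_tg)=\tilde f+\lambda g-L^0_tg=f$, so $u_0$ solves \eqref{14.0}. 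For uniqueness, if $u_0,u_0'$ are two solutions in $\mW^{2,q}_p$ with boundary datum $g$, then $w:=u_0-u_0'\in\mathring{\mW}^{2,q}_p((0,T)\times D)$ solves $\lambda w-L^0_tw=0$; by Lemma \ref{le7} it solves \eqref{7} with zero right‑hand side, and the uniqueness part of Lemma \ref{le11} forces $w\equiv 0$.

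For the estimate \eqref{14}, I would apply \eqref{13} to $w$ with data $\tilde f$ to control $\lambda\|w\|_{\mL^q_p}+\lambda^{1/2}\|w\|_{\mW^{1,q}_p}+\|\partial_t w\|_{\mL^q_p}$ by $C\|\tilde f\|_{\mL^q_p}$, and then upgrade to the second‑order norm by feeding this bound back into the representation \eqref{7}: each boundary piece $\zeta^kR^{(k)}_{\lambda,t}(\zeta^k\tilde f-L_{t,k}w)$, $k=1,\dots,n$, is estimated in $\mW^{2,q}_p$ by Theorem \ref{th0}(ii) (i.e.\ \eqref{5}) and the interior piece $k=0$ by \eqref{eq12}, using $\|\zeta^k\tilde f-L_{t,k}w\|_{\mL^q_p}\le\|\tilde f\|_{\mL^q_p}+C\|w\|_{\mW^{1,q}_p}\le C\|\tilde f\|_{\mL^q_p}$ for $\lambda\ge\lambda_1\ge 1$. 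Summing over the finite cover $z_1,\dots,z_n$ and recalling $\|\tilde f\|_{\mL^q_p}\le\|f\|_{\mL^q_p}+C_\lambda\|g\|_{\mW^{2,q}_p}$ and $u_0-g=w$ yields \eqref{14}, with $C$ depending only on $d,p,q,\alpha,\kappa,\rho_0,\diam(D)$ and $C_\lambda$ only on $\lambda$.

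The reduction, the algebraic identity for $\lambda u_0-L^0_tu_0$, and the uniqueness argument are immediate; the only point needing genuine care is the last one — checking that the localized $\mW^{2,q}_p$ estimates of Theorem \ref{th0}(ii) and \eqref{eq12}, once summed over the patches and combined with the a priori first‑order bound \eqref{13}, close up with constants independent of $\lambda$. This is where the full machinery assembled in Lemmas \ref{le1}--\ref{le11} is used, but it requires no new idea beyond bookkeeping of the partition of unity $\{\zeta^i\}$.
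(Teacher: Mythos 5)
Your proposal is correct and follows essentially the same route as the paper: subtract $g$, note that $u_0-g$ lies in ${\mathring\mW}^{2,q}_p((0,T)\times D)$ and solves the zero-boundary problem with right-hand side $f+L^0_tg-\lambda g$, and then invoke Lemmas \ref{le10}--\ref{le11} together with the estimate \eqref{13}. Your extra patchwise bookkeeping via Theorem \ref{th0}(ii) and \eqref{eq12} to recover the $\mW^{2,q}_p$-norm of $u_0-g$ is a sensible refinement, since \eqref{13} by itself only controls the first-order norm while \eqref{14} as written involves the second-order norm (though note your argument, like any resolvent estimate, yields that term without the $\lambda^{1/2}$ prefactor); the paper's own proof simply cites \eqref{13} at this point.
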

\begin{proof} Let us recall that $u_0=g$ on $(0,T)\times\partial D$ means that
$u_0-g\in {\mathring\mW}^{2,q}_p([0,T]\times D)$. Choose $\lambda_1$ to be sufficiently large such that both  Lemmas \ref{le10} and \ref{le11} hold true.
We shall apply Lemma \ref{le11}  to $u_0-g$.  Since $u_0-g\in{\mathring\mW}^{2,q}_p([0,T]\times D)$ and
\ce
\lambda (u_0-g)-L^0_t(u_0-g)=f+L^0_tg-\lambda g,
\de
by inequality (\ref{13}), we see
\ce
\lambda\|u_0-g\|_{\mL^q_p([0,T]\times D)}&+&\lambda^{1/2}\|u_0-g\|_{\mW^{2,q}_p([0,T]\times D)}+\|\partial_t(u_0-g)\|_{\mL^q_p([0,T]\times D)}\\
&&\leq C\|f+L^0_tg-\lambda g\|_{\mL^q_p([0,T]\times D)}\\
&&\leq C\left(\|f\|_{\mL^q_p([0,T]\times D)}+C_{\kappa,\lambda}\|g\|_{\mW^{2,q}_p([0,T]\times D)})\,. \right.
\de
The theorem is proved.
\end{proof}

In the above theorem, we need to assume $\lambda \ge \lambda_1$ for some $\lambda_1>0$.
We can improve this condition to $\lambda\ge 0$  by observing the following fact:  $u_0$ satisfies (\ref{14.0}) if and only if $v_\lambda(t,x):=e^{-\lambda t}u_0(t,x)$ satisfies
\ce
\partial_tv+L^\sigma v+e^{-\lambda t}f=0 ~~~in~~~ (0,T)\times D
\de
and $v_\lambda=e^{-\lambda t}g$ on $[0,T]\times\partial D$ for $\lambda\geq\lambda_1$.

\begin{theorem}\label{th12.1} Let $\partial D\in C^2$. Assume that $b=0$ and $\sigma$ satisfies $({\bf H}^\sigma)$ on the domain $D$.
Then for any $f\in \mL^q_p([0,T]\times D)$ and $g\in\mW^{2,q}_p([0,T]\times   D)$, there exists a unique solution $u_0\in\mW^{2,q}_p([0,T]\times D)$ to  the equation
\ce
\partial_t u_0+L^\sigma u_0=f
\de
in $(0,T)\times D$  such that $u_0=g$ on $(0,T)\times\partial D$, and
\begin{equation}\label{a.14}
\begin{split}
\|u_0-g\|_{\mW^{2,q}_p([0,T]\times D)}
&\leq C\left(\|f\|_{\mL^q_p([0,T]\times D)}+ \|g\|_{\mW^{2,q}_p([0,T]\times D)}\right),
\end{split}
\end{equation}
where $C$ depending only on $p,q,\alpha,\kappa,\rho_0$ and $\diam(D)$.

Furthermore, if $p,q\in(1,\infty)$,  $f\in\mL^q_p([0,T]\times D)$, $g\in\mW^{2,q}_p([0,T]\times D)$,   then for any $\beta\in[0,2)$ and $\gamma>1$ with $\frac{d}{p}+\frac{2}{q}<2-\beta+\frac{d}{\gamma}$, we have
\begin{equation}\label{b.14}
||u_0(t)||_{\mH^\beta_{\gamma}(D)}\leq C(T-t)^{(2-\beta)/2-d/2p-1/q+d/2\gamma}\\
\times \left(||f||_{\mL^q_p([t,T]\times D)}+||g||_{\mL^q_p([t,T]\times D)}\right),
\end{equation}
where $C=C(d,\kappa,p,q,\beta,\gamma,\alpha)$ is a positive constant independent of $t$.
\end{theorem}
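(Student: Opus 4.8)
For the first assertion the plan is to strip the spectral shift off Theorem~\ref{th12}. Fix $\lambda_1$ as in Theorem~\ref{th12}, so that Lemmas~\ref{le10} and \ref{le11} apply. Given $f\in\mL^q_p((0,T)\times D)$ and $g\in\mW^{2,q}_p((0,T)\times D)$, put $\widehat f:=-e^{\lambda_1 t}f$ and $\widehat g:=e^{\lambda_1 t}g$. The substitution recorded just after Theorem~\ref{th12} (taken with $\lambda=\lambda_1$) shows that a function $u_0\in\mW^{2,q}_p((0,T)\times D)$ solves $\partial_tu_0+L^\sigma u_0=f$ in $(0,T)\times D$ with $u_0=g$ on $(0,T)\times\partial D$ (the terminal condition $u_0(T,\cdot)=0$ being preserved by the substitution) if and only if $w:=e^{\lambda_1 t}u_0$ solves $\lambda_1 w-L^0_t w=\widehat f$ with $w=\widehat g$ on $(0,T)\times\partial D$. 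Since $e^{\pm\lambda_1 t}$ and its $t$-derivative are bounded on $[0,T]$ by constants depending only on $d,p,q,\alpha,\kappa,\rho_0$ and $\diam(D)$, multiplication by $e^{\pm\lambda_1 t}$ is an isomorphism of each of $\mL^q_p((0,T)\times D)$, $\mW^{2,q}_p((0,T)\times D)$ and $\mathring{\mW}^{2,q}_p((0,T)\times D)$. Hence Theorem~\ref{th12}, applied to $w$ with source $\widehat f$, boundary datum $\widehat g$ and $\lambda=\lambda_1$, produces a unique such $u_0$; and translating estimate \eqref{14} back through $u_0-g=e^{-\lambda_1 t}(w-\widehat g)$ — with $\lambda_1$ now a fixed number, so that the powers of $\lambda_1\ge1$ and the constant $C_{\lambda_1}$ are admissible — gives \eqref{a.14}.

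For the second assertion I would use a Duhamel/Green representation of $u_0$ together with a sharp parabolic smoothing estimate for the Dirichlet problem on $D$. Let $\{G_{t,s}:0\le t<s\le T\}$ denote the solution operator of the homogeneous Dirichlet problem for $\partial_t+L^\sigma$ on $D$, so that $w(t):=G_{t,s}h$ solves $\partial_tw+L^\sigma w=0$ on $(t,s)\times D$, vanishes on the lateral boundary, and $w(s)=h$. One writes $u_0(t)=-\int_t^T G_{t,s}f(s)\,ds+u_g(t)$, where $u_g$ is the contribution of the boundary datum, carried by the Green function of $D$. The key ingredient is the bound
\begin{equation*}
\|G_{t,s}h\|_{\mH^\beta_\gamma(D)}\le C\,(s-t)^{-\frac{\beta}{2}-\frac{d}{2}\left(\frac1p-\frac1\gamma\right)}\,\|h\|_{L^p(D)},\qquad 0\le t<s\le T,
\end{equation*}
with $C=C(d,\kappa,p,q,\beta,\gamma,\alpha)$, together with an analogous potential estimate for $u_g$ controlling $\|u_g(t)\|_{\mH^\beta_\gamma(D)}$ by $C\int_t^T (s-t)^{-\frac\beta2-\frac d2(\frac1p-\frac1\gamma)}\|g(s)\|_{L^p(D)}\,ds$. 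I would obtain both exactly as the resolvent estimates of this section were obtained: on $\mR^d$ and on the half-space $\mR^d_+$ these are the standard parabolic $L^p$-smoothing bounds for operators with uniformly elliptic, $\alpha$-H\"older coefficients (the same ones underlying the Zvonkin estimates, available in Krylov's framework), and one transfers them to $D$ through the boundary-flattening diffeomorphisms $\psi^z$ of Definition~\ref{df0} and the partition of unity $\{\zeta^i\}_{i=0}^n$, precisely as in Lemmas~\ref{le10}--\ref{le11}; the commutator terms $L_{t,i}$ are first order and are absorbed either by the smallness of $T-t$ or by the Neumann-series argument used there.

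Granting the smoothing estimate, \eqref{b.14} follows from Minkowski's inequality in $x$ and H\"older's inequality in $s$. Writing $\theta:=\frac{\beta}{2}+\frac{d}{2}\left(\frac1p-\frac1\gamma\right)$ and letting $q'$ be the conjugate exponent of $q$,
\begin{align*}
\|u_0(t)\|_{\mH^\beta_\gamma(D)}
&\le C\int_t^T (s-t)^{-\theta}\big(\|f(s)\|_{L^p(D)}+\|g(s)\|_{L^p(D)}\big)\,ds\\
&\le C\left(\int_0^{T-t} r^{-\theta q'}\,dr\right)^{1/q'}\big(\|f\|_{\mL^q_p((t,T)\times D)}+\|g\|_{\mL^q_p((t,T)\times D)}\big),
\end{align*}
and $\int_0^{T-t} r^{-\theta q'}\,dr=(1-\theta q')^{-1}(T-t)^{1-\theta q'}$ is finite exactly when $\theta q'<1$, i.e. when $\frac dp+\frac2q<2-\beta+\frac d\gamma$; moreover $\frac1{q'}-\theta=\frac{2-\beta}{2}-\frac{d}{2p}-\frac1q+\frac{d}{2\gamma}$, which is precisely the exponent of $T-t$ appearing in \eqref{b.14}. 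This is the desired conclusion.

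The routine parts here are the exponential reduction and the final H\"older computation; the real work is the sharp Dirichlet smoothing estimate on the bounded domain $D$, with the exponent $(s-t)^{-\theta}$ and a constant depending only on $d,\kappa,p,q,\beta,\gamma,\alpha$ ($\rho_0$ and $\diam(D)$ being fixed with the domain). The delicate points I anticipate are: preserving the parabolic scaling when pulling the half-space smoothing bound back through the $C^2$ diffeomorphisms $\psi^z$, so that no singularity worse than $(s-t)^{-\theta}$ is produced; keeping the lower-order commutator terms generated by the cutoffs $\zeta^i$ under control near $s=t$; and handling the boundary datum $g$ with only its spatial $L^p$-norm on the right-hand side of \eqref{b.14}, which forces one to exploit the extra smoothing of the Green-function part $u_g$ rather than simply absorbing $g$ into the source by a $\mW^{2,q}_p$-extension.
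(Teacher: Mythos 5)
Your reduction of the first assertion via the substitution $w=e^{\lambda_1 t}u_0$ is exactly the paper's own route (it is the remark placed between Theorem \ref{th12} and Theorem \ref{th12.1}), and your final H\"older-in-time computation for \eqref{b.14} is correct, including the bookkeeping that $\theta q'<1$ is equivalent to $\frac dp+\frac 2q<2-\beta+\frac d\gamma$ and that the exponent of $T-t$ is $(2-\beta)/2-d/(2p)-1/q+d/(2\gamma)$.

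The gap is in the second assertion: the entire content of \eqref{b.14} is the smoothing bound $\|G_{t,s}h\|_{\mH^\beta_\gamma(D)}\le C(s-t)^{-\beta/2-d/(2p)+d/(2\gamma)}\|h\|_{L^p(D)}$ for the Dirichlet evolution on $D$ with coefficients only H\"older in $x$ and measurable in $t$, together with the companion estimate for the boundary potential that carries $g$ with only $\|g\|_{\mL^q_p}$ on the right, and you assert both rather than prove them. The analogy you invoke with Lemmas \ref{le10}--\ref{le11} does not deliver them: those lemmas produce $\lambda$-dependent resolvent estimates in $\mW^{2,q}_p$ (same integrability in and out), not integrability-improving $L^p\to\mH^\beta_\gamma$ bounds with a time singularity, and absorbing the first-order commutators $L_{t,i}$ in a time-weighted smoothing estimate requires an iteration of singular convolutions in $s-t$ that is not "precisely as" the Neumann-series argument there. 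The paper takes a different and more concrete route at exactly this point: it mollifies $\sigma,f,g$, invokes Friedman's Gaussian bounds \eqref{18.01}--\eqref{18.03} on the parabolic Dirichlet Green function of $D$ and its first and second spatial derivatives (arguing the constants are independent of the mollification parameter), converts these kernel bounds into the $(s-t)^{-\beta/2-d/(2p)+d/(2\gamma)}$ smoothing estimates \eqref{019.01} and \eqref{019.03} via Gagliardo--Nirenberg and interpolation, treats the boundary datum through the surface potential $\cJ^n_{s,t}$, and then removes the mollification by weak compactness in $\mW^{2,q}_p((0,T)\times D)$ plus a trace-compactness argument to identify the boundary value of the limit. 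In particular the issue you yourself flag as delicate --- controlling the boundary contribution by the spatial $L^p$ norm of $g$ alone --- is precisely where the paper's Green-function estimates do the work, and your proposal leaves that step unproved; as it stands the argument for \eqref{b.14} is a correct outline resting on an unestablished key estimate.
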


\begin{proof}We only need to  prove estimate (\ref{b.14}) and we shall do this by using mollifying technique and weak convergence argument. Let $\varrho$ be a nonnegative smooth function on $[0,T]\times\mR^d$ with support in $\{(t,x)\in[0,T]\times\mR^d:t+|x|^2\leq1\}$ and $\int_{[0,T]\times\mR^d}\varrho(t,x)dxdt=1$. Set $\varrho_n(t,x):=n^{d+1}\varrho(nt,nx)$. Define
\ce
\sigma_n:=\sigma\ast\varrho_n,~~~~~~~~~~~~f_n:=f\ast\varrho_n, ~~~~~~~g_n:=g\ast\varrho_n.
\de
It is a classical fact (e.g. \cite[Chapter I]{Friedman}) that for a bounded domain $D$ of $\mR^d$, there exists the parabolic Dirichlet-Green function  for the operator $\partial_t+L^{\sigma_n}$, restricted on $D$ with  boundary value $g$, denoted by $G_{D,g,n} (s,x;t,y)$.

Define
\ce
\cT_{s,t}^n f (t,x):=\int_Df (t,y)G_{D,g,  n} (s,x;t,y)dy\,,\quad \forall \ f\in C^{1,2}([0, T]\times D)\,.
\de
The function $\cT_{s,t} ^nf_n (t,x)$ satisfies that for all $(t,x)\in[0,T]\times D$,
\begin{eqnarray}\label{017.1}
&&\partial_s\cT_{s,t}^n f_n(t,x)+L^{\sigma_n}\cT_{s,t}^n f_n(t,x)=0,\nonumber \\
&&\qquad \lim_{s\uparrow t}\cT_{s,t}^n f_n(t,x)=f_n(t,x).
\end{eqnarray}
Furthermore, for all $x,y\in D$ and $0\leq s\leq t\leq T$, using (1.4.9) with $\mu=d/2$, (1.4.15) and Lemma 1.4.3 in \cite[Chapter I]{Friedman},  we have for any $0<\kappa^*<\kappa$,
\begin{equation}\label{18.01}
|G_{D,g , n}(s,x;t,y)| \leq C_{d,\alpha,\kappa^*}(t-s)^{-\frac{d}{2}}exp\Big[-\frac{\kappa^*|x-y|^2}{2(t-s)}\Big]\,.
\end{equation}
Using \cite[(1.4.10)]{Friedman}  with $\mu=(d+1)/2$, \cite[(1.4.15)]{Friedman}  and   in \cite[Lemma 1.4.3]{Friedman},  we have for any $0<\kappa^*<\kappa$,
\begin{equation}\label{18.02}
|\nabla_xG_{D,g, n}(s,x;t,y)| \leq C_{d,\alpha,\kappa^*}(t-s)^{-\frac{d+1}{2}}exp\Big[-\frac{\kappa^*|x-y|^2}{2(t-s)}\Big] \,.
\end{equation}
Using \cite[(1.4.11)]{Friedman}  with $\mu=(d+2)/2$, \cite[(1.4.15)]{Friedman}  and   in \cite[Lemma 1.4.3]{Friedman},  we have for any $0<\kappa^*<\kappa$,
\begin{equation}\label{18.03}
|\nabla^2_xG_{D,g, n}(s,x;t,y)| \leq C_{d,\alpha,\kappa^*}(t-s)^{-\frac{d+2}{2}}exp\Big[-\frac{\kappa^*|x-y|^2}{2(t-s)}\Big] \,.
\end{equation}
In the above inequalities, the constant $C_{d,\alpha,\kappa^*}$ depends on $\alpha, \kappa, d$ and presumably also
on $n$. However, it depends on $n$
 via the maximum values of $\sigma_n$ and their derivatives on $D$, which
are again by the property of the mollifying operator, bounded by the maximum values of $\sigma$
and their derivatives on $D$. So the constant $C_{d,\alpha,\kappa^*}$ can be chosen so that it does not
depend  on $n$.
In fact,  from  \cite[ Chapter 9]{Friedman} if we take  the  fundamental solution (Green's function) of parabolic equations with coefficients depending on $t$ by freezing the original equation only on the spatial point (not at the  time point) we can obtain all the above estimates even if $\sigma_n$ is not uniformly H\"older continuous in $t$ with  $C_{d,\alpha,\kappa^*}$ depending  only on $\alpha, \kappa, d$ and the values of $\sigma$ on $D$.

By the gradient estimates (\ref{18.01}), (\ref{18.02}) and (\ref{18.03}), we have for all $p\in[1,\infty]$,
\ce
||\nabla_x^j\cT_{s,t}^n f_n(t,\cdot)||_{L_p(D)}\leq C(t-s)^{-j/2}||f_n(t,\cdot)||_{L_p(D)},~~~~~~j=0,1,2.
\de
Moreover, by Gagliardo-Nirenberg's  inequality for bounded domain (e.g. \cite[Theorem 2.25]{Taira}),   complex interpolation inequality (e.g. \cite[Theorem 2.1]{Taira}), and  the gradient estimates,  we have
for any $p,\gamma>1$ and $\beta\in[0,2)$, there exists a constant $C$ such that
\begin{eqnarray}\label{019.01}
&&\|\cT_{s,t}^n f_n(t,\cdot)\|_{\mH^\beta_\gamma(D)}  \nonumber\\
&\leq&\|(-\Delta)^{\beta/2} \cT_{s,t}^n f_n(t,\cdot)||_{ L_\gamma(D)}
+\|\cT_{s,t}^n f_n(t,\cdot)\|_{ L_\gamma(D)} \nonumber\\
&\le &C||\nabla \cT_{s,t}^nf_n(t,\cdot)||^{\beta/2 }_{L_{\frac{\beta \gamma}{2}} (D)}\cdot|| \cT_{s,t}^n f_n(t,\cdot)||^{1-\beta /2 }_{L_\infty (D)}   +\|\cT_{s,t}^n f_n(t,\cdot)\|_{ L_\gamma(D)} \nonumber \\
&\le &C||\nabla^2\cT_{s,t}^nf_n(t,\cdot)||^{\beta/2+d/(2p)-d/(2\gamma)}_{L_p (D)}\nonumber\\
&&\quad  \cdot|| \cT_{s,t}^n f_n(t,\cdot)||^{(2-\beta)/2-d/(2p)+d/(2\gamma)}_{L_p (D)}   +\|\cT_{s,t}^n f_n(t,\cdot)\|_{ L_p(D)} \nonumber \\
&\leq& C(t-s)^{-\beta/2-d/(2p)+d/(2\gamma)}||f(t,\cdot)||_{L_p(D)}.
\end{eqnarray}
Define
\ce
\cJ_{s,t}^nh(t,x):=\int_{\partial D}h(t,\xi)G_{D, g,n} (s,x;t,\xi)dS(\xi)\,,\quad \forall \ h\in C^{1,2}([0, T]\times D)\,,
\de
where $dS(\xi)$ is the surface "area"  element on $\partial D$.

Therefore, by the gradient estimates (\ref{18.01}), (\ref{18.02}) and (\ref{18.03}), we have for all $p\in[1,\infty]$,
\begin{eqnarray}\label{019.02}
||\nabla_x^j\cJ_{s,t}^n g_n(t,\cdot)||^p_{L_p(D)}&=&\int_{D}|\nabla_x^j\cJ_{s,t}^n g_n(t,x)|^pdx \nonumber \\
&\leq& C\int_{D}\int_{\partial D}|g_n|^p(t,\xi)\times |\nabla_x^jG_{D,g,n}(s,x;t,\xi)|^pdS(\xi)dx \nonumber \\
&\leq& C(t-s)^{-jp/2}\int_{\partial D}|g_n|^p(t,\xi)dS(\xi) \nonumber \\
&\leq& C(t-s)^{-jp/2}\|g_n\|^p_{L_p(D)}\nonumber \\
&\leq& C(t-s)^{-jp/2}\|g(t,\cdot)\|^p_{L_p(D)},~~~~~~j=0,1,2\,,
\end{eqnarray}
where $C$ depends on $\alpha,\kappa,d$, and the values of $\sigma$ on $D$.

Using the same method and inequalities for $\cT^n_{s,t}f_n(t,x)$, we can get
\begin{eqnarray}\label{019.03}
\|\cJ_{s,t}^n g_n(t,\cdot)\|_{\mH^\beta_\gamma(D)}&\leq& C(t-s)^{-\beta/2-d/(2p)+d/(2\gamma)}||g(t,\cdot)||_{L_p(D)}.
\end{eqnarray}

Now let us consider the function
\ce
u_{n,1}(s,x):=\int_s^T\int_Df_n(t,y)G_{D,g, n} (s,x;t,y)dydt\,.
\de
Then $u_{n,1}$ satisfies
\begin{equation}\label{20.001}
\left\{
                    \begin{array}{lll}
                      \partial_su_{n,1}+L^{\sigma_n}u_{n,1}+f_n=0, ~~(s,x)\in [0,T]\times D,~ \\
                     u_{n,1}(T,x)=0, ~~~x\in D,~~\\
                    \end{array}
                    \right.
\end{equation}
and
\begin{equation}\label{20.01}
\|u_{n,1}\|_{\mL^q_p([t,T]\times D)}\leq C\|f_n\|_{\mL^q_p([t,T]\times D)}
\end{equation}
by Theorem 1.6.9 in \cite{Friedman}.

On the other hand, we can also consider the function
\ce
u_{n,2}(s,x):=\int_s^T\int_{\partial D}G_{D,g, n} (s,x;t,\xi)(g_n(t,\xi)-u_{n,1}(t,\xi))dS(\xi)dt\,.
\de
This function  $u_{n,2}$ satisfies
\begin{equation}\label{20.02}
\left\{
                    \begin{array}{lll}
                      \partial_su_{n,2}+L^{\sigma_n}u_{n,2}=0, ~~(s,x)\in [0,T]\times D,~ \\
                     u_{n,2}(T,x)=0, ~~~x\in D\\
                     u_{n,2}(s,x)=g_n(s,x)-u_{n,1}(s,x), ~~(s,x)\in [0,T]\times \partial D.
                    \end{array}
                    \right.
\end{equation}
Now define
\ce
u_n(s,x):=u_{n,1}(s,x)+u_{n,2}(s,x), ~~~s\in [0,T],~~x\in D.
\de
From the expressions of $u_{n,1}(s,x)$ and $u_{n,2}(s,x)$  we see   $u_n\in\mW^{2,q}_p([0,T]\times D)$.
By (\ref{20.001}) and (\ref{20.02}),  it is easy to see that $u_n$ satisfies
\begin{equation}\label{21.1}
\left\{
                    \begin{array}{lll}
                     \partial_su_n+L^{\sigma_n}u_n+f_n=0, ~~(s,x)\in [0,T]\times D,~ \\
                     u_n(T,x)=0, ~~~x\in D\, , \\
                     u_n(s,x)|_{x \in \partial D}=g_n(s,x), ~~~s\in [0,T]\,.
                    \end{array}
                    \right.
\end{equation}
In fact, we can apply (\ref{a.14}) to obtain
\begin{equation}\label{sec4-eq20.1}
\begin{split}
\|u_n\|_{\mL^q_p([0,T]\times D)}&+\|\partial_tu_n\|_{\mL^q_p([0,T]\times D)}+\|\nabla^2_xu_n\|_{\mL^q_p([0,T]\times D)}\\
& \leq C(\|f_n\|_{\mL^q_p([0,T]\times D)}+\|g_n\|_{\mL^q_p([0,T]\times D)})\\
& \leq C(\|f\|_{\mL^q_p([0,T]\times D)}+\|g\|_{\mL^q_p([0,T]\times D)})\,,
\end{split}
\end{equation}
where the constant $C$ is independent of $n$ and the above last inequality follows from the property of mollifying operator.
This proves that $u_n$ is a bounded sequence in $\mW^{2,q}_p([0,T]\times D)$.
By the weak compactness of $\mW^{2,q}_p([0,T]\times D)$ (we refer to \cite[Page 347, Theorem 11.65]{Leoni} for detail), there exists  a subsequence still denoted by $u_n$ and a function $u_0\in\mW^{2,q}_p([0,T]\times D)$ with $u_0(T)=0$ such that $u_n$   converges weakly  to $u_0$.  In fact, for any $\varphi\in C_0^\infty([0,T]\times D)$, we have
\ce
\langle L^{\sigma_n}u_n-L^{\sigma}u_0,\varphi\rangle_{[0,T]\times D}&=&\int_0^T\int_{D}(L^{\sigma_n}u_n-L^{\sigma}u_0)(t,x)\varphi(t,x)dxdt\\
&=&  \int_0^T\int_D(L^{\sigma_n}u_n-L^{\sigma}u_n)(t,x)\varphi(t,x)dxdt  \\
& &\qquad+\int_0^T\int_D(L^{\sigma}u_n-L^{\sigma}u_0)(t,x)\varphi(t,x)dxdt\\
&:=&I_1+I_2 .
\de
By the boundedness of $\varphi$ and H\"older's  inequality,   we  have
\ce
I_1
&\leq&C\left(\int_0^T(\|\sigma_n(t)-\sigma(t)\|_{L_{\infty}(D)} \cdot\|\nabla^2_xu_n(t)\|_{L_p(D)}dt\right)\\
&\leq& C\left(\int_0^T(\|\sigma_n(t)-\sigma(t)\|_{L_{\infty}(D)} )^{\frac{q}{q-1}}dt\right)^{\frac{q-1}{q}}\cdot\|\nabla^2_xu_n\|_{\mL^q_p
([0,T]\times D)}.
\de
Letting  $n\rightarrow\infty$, we have by (\ref{sec4-eq20.1})
$$
\lim_{n\rightarrow\infty}I_1=0\,.
$$
As for $I_2$, using the definition of $L^{\sigma}$ and assumption of $H^{\sigma}$, we have
\ce
 I_2 &=& \frac{1}{2}\int_0^T\int_D\sum^d_{i,j}\sum^d_{k=1}(\sigma^{ik}\sigma^{jk})(t,x)\frac{\partial^2}{\partial x_i\partial x_j}(u_n-u_0)(t,x)\varphi(t,x)dxdt   \\
 &=& \frac{1}{2}\int_0^T\int_D\sum^d_{i,j} \frac{\partial^2}{\partial x_i\partial x_j}(u_n-u_0)(t,x)\left[\sum^d_{k=1}(\sigma^{ik}\sigma^{jk})(t,x)\right]  \varphi(t,x)dxdt\,. 
\de
Since  $u_n$   converges  weakly to $u_0$ in $\mW^{2,q}_p([0,T]\times D)$, we have $\nabla_x^2 u_n$   converges  weakly to $\nabla_x^2 u_0$ in $\mL^{q}_p([0,T]\times D)$.
On the other hand, by Assumption $({\bf H}^{\sigma})$ and the boundedness of $D$,
 $\si^{ik} $ is bounded on  $[0,T]\times D$ and hence
 $\left[\sum^d_{k=1}(\sigma^{ik}\sigma^{jk})\right]  \varphi $ is in the dual of $\mL^{q}_p([0,T]\times D)$. 
Thus,  the weak convergence of $\nabla_x^2 u_n$     to $\nabla_x^2 u_0$ in $\mL^{q}_p([0,T]\times D)$ implies 
$$
\lim_{n\rightarrow\infty}I_2=0.
$$
Hence,
\ce
\lim_{n\rightarrow\infty}\int_0^T\int_D(L^{\sigma_n}u_n-L^{\sigma}u_0)(t,x)\varphi(t,x)dxdt=0.
\de
Similarly,
\ce
\lim_{n\rightarrow\infty}\int_0^T\int_D(\partial_tu_n-\partial_tu_0)(t,x)\varphi(t,x)dxdt=-\lim_{n\rightarrow\infty}\int_0^T\int_D(u_n-u_0)(t,x)\partial_t\varphi(t,x)dxdt=0,
\de
since $u_n$   converges weakly to $u_0$.

Next,  we will show the convergence of boundary function, that is: $u_0(s,x)|_{\partial D}=g(s,x)$
for all $s\in[0,T]$.   Denote $t=x_{d+1}$, $a_{i(d+1)}=a_{(d+1)i}=0$ for all $i=1,2,\cdots, d+1 $.  Then     the  parabolic equation (\ref{21.1}) with $i=1,2,\cdots, d+1,$ $x=(x_1,\cdots,x_{d+1})$
can be considered  as  an elliptic equation on $[0,T]\times D$ in place of $D$.   The corresponding boundary of $[0,T]\times D$
\ce
\partial([0,T]\times D)=(\{t=0\}\times D)\cup (\{t=T\}\times D)\cup ([0,T]\times \partial D)
\de
is Lipschitz continuous.  Thus, we can use the  compactness of trace operator  (see \cite[Page 592-594,Theorem 18.1 and
Corollary 18.6]{Leoni})   to obtain $u_0(s,x)|_{\partial D}=g(s,x)$
for all $s\in[0,T]$.

Moreover, for any $p,q,\gamma\in(1,\infty)$ and $\beta\in[0,2)$ with $\frac{d}{p}+\frac{2}{q}<2-\beta+\frac{d}{\gamma}$, by (\ref{019.01}), (\ref{019.03})  and  H\"older's inequality, we have
\begin{eqnarray}\label{021}
||u_n(t)||_{\mH^\beta_{\gamma}(D)}~&\leq&\int_t^T||\cT^n_{t,r}f_n(r,\cdot)||_{\mH^\beta_{\gamma}(D)}dr +\int_t^T||\cJ^n_{t,r}g_n(r,\cdot)||_{\mH^\beta_{\gamma}(D)}dr\nonumber\\
&\qquad&+ \int_t^T||\cJ^n_{t,r}u_{n,1}(r,\cdot)||_{\mH^\beta_{\gamma}(D)}dr\nonumber\\
&\leq& C\int_t^T(r-t)^{-\beta/2-d/(2p)+d/2\gamma}\Big(||f_n(r,\cdot)||_{L_p(D)}  \nonumber\\
&\qquad&+||g_n(r,\cdot)||_{L_p(D)} +||u_{n,1}(r,\cdot)||_{L_p(D)}\Big)dr \nonumber\\
&\leq& C\left(\int_t^T(r-t)^{-\beta q^*/2-dq^*/(2p)+dq^*/2\gamma}dr\right)^{1/q^*}\nonumber\\
&\qquad& \times\Big(||f_n||_{\mL^q_p([t,T]\times D)} +
||g_n||_{\mL^q_p([t,T]\times D)}+||u_{n,1}||_{\mL^q_p([t,T]\times D)}\Big) \nonumber\\
&\leq& C(T-t)^{(2-\beta)/2-d/(2p)-1/q+d/2\gamma}\nonumber\\
&\qquad& \times\Big(||f_n||_{\mL^q_p([t,T]\times D)} +
||g_n||_{\mL^q_p([t,T]\times D)}+||u_{n,1}||_{\mL^q_p([t,T]\times D)}\Big) \nonumber\\
&\leq& C(T-t)^{(2-\beta)/2-d/(2p)-1/q+d/2\gamma}\nonumber\\
&\qquad& \times\Big(2||f_n||_{\mL^q_p([t,T]\times D)} +
||g_n||_{\mL^q_p([t,T]\times D)}\Big) \nonumber\\
&\leq& C(T-t)^{(2-\beta)/2-d/(2p)-1/q+d/2\gamma}\nonumber\\
&\qquad& \times\Big(2||f||_{\mL^q_p([t,T]\times D)}+||g||_{\mL^q_p([t,T]\times D)}\Big),
\end{eqnarray}
where $q^*=\frac{q}{q-1}$ is the conjugate of $q$ and $C=C(d,\kappa,p,q,\alpha,\beta,\gamma) $
is a positive constant  and where in the last inequality we used the fact that
$$
\lim_{n\rightarrow\infty}\|f_n-f\|_{\mL^q_p([0,T]\times D)}=0.
$$
Letting $n\to \infty$ yields
$$
||u_0(t)||_{\mH^\beta_{\gamma}(D)} \leq C(T-t)^{(2-\beta)/2-d/(2p)-1/q+d/2\gamma}\left(||f||_{\mL^q_p([t,T]\times D)}+||g||_{\mL^q_p([t,T]\times D)}\right),
$$
where $C=C(d,\kappa,p,q,\alpha,\beta,\gamma)>0$. This shows the theorem.
\end{proof}

\subsection{General drift term and general  Dirichlet boundary condition}
Before stating  the following main result of this section,
Let us recall \cite[Theorem 13.7.2]{Krylov1}   that a function in  $\cap_{k=1}^\infty \mW^{k,q}_p([0,T]\times \partial D) $  can be continuously extended  to  $D$ as a $
\cap_{k=1}^\infty \mW^{k,q}_p([0,T]\times D)$ function.
Here is the main result in this section.

\begin{theorem}\label{th17} Let $\partial D\in C^2$.  Assume that $\sigma$ satisfies $({\bf H}^\sigma)$ and $b$ satisfies $ ({\bf H}^b)$ on the domain $D$.  Then for any $f\in \mL^q_p([0,T]\times D)$ and $g\in\mW^{2,q}_p([0,T]\times D)$, there exists a unique solution $u\in\mW^{2,q}_p([0,T]\times D)$ to  the equation
\begin{equation}\label{015}
\partial_tu+L^\sigma u+ b \cdot\nabla u =f ~~~in~~~ [0,T]\times D
\end{equation}
such that $u=g$ on $[0,T]\times\partial D$. Moreover, there exists a constant $C$ depending only on $d,p,\kappa, \diam(D)$,and $\rho_0$ such that
\begin{eqnarray}\label{a.16}
||u-g||_{\mW^{2,q}_p([0,T]\times D)}&\leq& C\left(\|b\|_{\mL^q_p([0,T]\times D)}\exp\{CT^{q\delta/3}\|b\|^q_{\mL^q_p([0,T]\times D)}\}+1\right)  \nonumber \\
&&\qquad \times \Big(\|f\|_{\mL^q_p([0,T]\times D)}+\|g\|_{\mL^q_p([0,T]\times D)}\Big)+C\|g\|_{\mW^{2,q}_p([0,T]\times D)}\,.\nonumber\\
\end{eqnarray}
Furthermore, if $p,q\in(1,\infty)$,  $f\in\mL^q_p([0,T]\times D)$, $g\in\mW^{2,q}_p([0,T]\times D)$,  then for any $\beta\in[0,2)$ and $\gamma>1$ with $\frac{d}{p}+\frac{2}{q}<2-\beta+\frac{d}{\gamma}$,
we have
\begin{eqnarray}\label{016}
||\nabla u(t)||_{\cC^{\delta/2}(D)}&\leq& C(T-t)^{\delta/3}\exp\{C_1(T-t)^{q\delta/3}\|b\|^q_{\mL^q_p([0,T]\times D)}\} \nonumber\\
&&\qquad \times \left(\|f\|^q_{\mL^q_p([0,T]\times D)}+\|g\|^q_{\mL^q_p([0,T]\times D)}\right)\,,
\end{eqnarray}
where $\delta>0, C=C(d,p,q,\beta,\gamma,\alpha)$ is a positive constant independent of $t$.

\end{theorem}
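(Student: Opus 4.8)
\emph{Proof proposal.} The plan is to regard the first--order term $b\cdot\nabla u$ as a perturbation of the zero--drift Cauchy--Dirichlet problem of Theorem \ref{th12.1}, and to produce $u$ by a Picard iteration whose convergence is driven by the smoothing estimate \eqref{b.14} together with the Sobolev embedding \eqref{sec2-eq1}. The first step is to fix the parameters. Since $\frac dp+\frac 2q<1$ one may choose a small $\delta>0$ and then $\gamma>1$, $\beta\in[0,2)$ with
\[
1+\tfrac{\delta}{2}+\tfrac d\gamma\ <\ \beta\ <\ 2-\tfrac dp-\tfrac 2q+\tfrac d\gamma .
\]
The right inequality is exactly what makes \eqref{b.14} applicable, with the strictly positive time--exponent $\theta_0:=\tfrac{2-\beta}{2}-\tfrac{d}{2p}-\tfrac1q+\tfrac{d}{2\gamma}$; optimizing $\beta,\gamma$ one arranges $\theta_0\ge\delta/3$. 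The left inequality gives, via \eqref{sec2-eq1}, the embedding $\mH^\beta_\gamma(D)\hookrightarrow\cC^{1+\delta/2}(D)$, hence $\|\nabla v(t)\|_{\cC^{\delta/2}(D)}\le C\|v(t)\|_{\mH^\beta_\gamma(D)}$. Note that the right inequality is equivalent to $\gamma_1 q^*<1$, where $q^*=q/(q-1)$ and $\gamma_1:=\tfrac\beta2+\tfrac d{2p}-\tfrac d{2\gamma}$ is the singular exponent occurring in the pointwise kernel bounds \eqref{019.01} and \eqref{019.03} inside the proof of Theorem \ref{th12.1}.

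Next I would set up the iteration. Let $\mathcal S(h,g)$ denote the bounded affine solution operator of $\partial_t v+L^\sigma v=h$, $v(T,\cdot)=0$, $v=g$ on $(0,T)\times\partial D$, provided by Theorem \ref{th12.1}; then $u$ solves \eqref{015} with $u=g$ on $(0,T)\times\partial D$ if and only if $u=\mathcal S(f-b\cdot\nabla u,g)$. I would put $u^{(0)}:=\mathcal S(f,g)$ and $u^{(n+1)}:=\mathcal S(f-b\cdot\nabla u^{(n)},g)$; by the previous paragraph each $\nabla u^{(n)}$ is bounded, so $b\cdot\nabla u^{(n)}\in\mL^q_p((0,T)\times D)$ and the scheme is well posed (to avoid any circularity one first replaces $b$ by a bounded mollification $b_\varepsilon$, runs everything with $\varepsilon$--uniform constants, and lets $\varepsilon\to0$ at the end). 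With $A_n(t):=\|\nabla u^{(n)}(t)\|_{\cC^{\delta/2}(D)}$, the Duhamel--type decomposition underlying Theorem \ref{th12.1} (the terms built from $\cT^n_{t,r}$ and $\cJ^n_{t,r}$), the kernel estimates \eqref{019.01} and \eqref{019.03}, the embedding above, and $\|b(r)\nabla u^{(n)}(r)\|_{L^p(D)}\le\|b(r)\|_{L^p(D)}\|\nabla u^{(n)}(r)\|_\infty$ give a singular--Gr\"onwall recursion
\[
A_{n+1}(t)\ \le\ A_0(t)\ +\ C\int_t^T(r-t)^{-\gamma_1}\,\|b(r)\|_{L^p(D)}\,A_n(r)\,dr,
\]
with $A_0(t)\le C(T-t)^{\theta_0}\big(\|f\|_{\mL^q_p((t,T)\times D)}+\|g\|_{\mL^q_p((t,T)\times D)}\big)$ from \eqref{b.14}. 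I would then iterate this for the successive differences $d_n:=A_n-A_{n-1}$ over the time simplex $\{t<r_1<\dots<r_n<T\}$ and apply H\"older's inequality with exponents $q,q^*$ to separate the Dirichlet integral $\int\prod_i(r_i-r_{i-1})^{-\gamma_1}$ (finite because $\gamma_1 q^*<1$) from $\prod_i\|b(r_i)\|_{L^p(D)}$ (which integrates to $\|b\|_{\mL^q_p((t,T)\times D)}^{\,n}/(n!)^{1/q}$); the outcome is a bound on $d_n(t)$ by $C^n$ times $\big((T-t)^{\theta_0}\|b\|_{\mL^q_p((t,T)\times D)}\big)^{n}$ over a product of $\Gamma$--factors that grows faster than any geometric sequence, so $\sum_n d_n(t)$ converges and $A(t):=\lim_n A_n(t)$ obeys
\[
A(t)\ \le\ C\,(T-t)^{\delta/3}\exp\!\big\{C(T-t)^{q\delta/3}\|b\|^q_{\mL^q_p((0,T)\times D)}\big\}\big(\|f\|_{\mL^q_p((0,T)\times D)}+\|g\|_{\mL^q_p((0,T)\times D)}\big),
\]
which is \eqref{016}. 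In particular $\nabla u^{(n)}\to\nabla u$ in $L^\infty((0,T)\times D)$, hence $b\cdot\nabla u^{(n)}\to b\cdot\nabla u$ in $\mL^q_p((0,T)\times D)$; by continuity of $\mathcal S$ the limit satisfies $u=\mathcal S(f-b\cdot\nabla u,g)$, i.e.\ $u\in\mW^{2,q}_p((0,T)\times D)$ solves \eqref{015} with $u=g$ on $(0,T)\times\partial D$. Estimate \eqref{a.16} then follows by feeding $\|b\cdot\nabla u\|_{\mL^q_p((0,T)\times D)}\le\|b\|_{\mL^q_p((0,T)\times D)}\,\|\nabla u\|_{L^\infty((0,T)\times D)}$, controlled by the displayed bound for $A$, into the a priori estimate \eqref{a.14} of Theorem \ref{th12.1} applied to $\mathcal S(f-b\cdot\nabla u,g)$.

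For uniqueness, if $u_1,u_2$ both solve \eqref{015} with the same $f,g$, then $w:=u_1-u_2\in{\mathring\mW}^{2,q}_p((0,T)\times D)$ solves $\partial_t w+L^\sigma w+b\cdot\nabla w=0$ with $w(T,\cdot)=0$; applying the estimate above on a terminal slab $(T-\tau,T)$ with $\tau$ small enough that $C\tau^{\theta_0}\|b\|_{\mL^q_p((0,T)\times D)}<1$ forces $\nabla w\equiv0$, hence $w\equiv0$ (it vanishes on $\partial D$), on $(T-\tau,T)$; in particular $w(T-\tau,\cdot)=0$, and one repeats the argument on $(T-2\tau,T-\tau)$, exhausting $(0,T)$ in finitely many steps. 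I expect the crux of the proof to be closing this singular--Gr\"onwall iteration with the sharp time--weights: one must choose $\delta,\beta,\gamma$ (hence $\gamma_1$) simultaneously so that \eqref{b.14} carries a strictly positive time exponent $\theta_0$, the embedding $\mH^\beta_\gamma(D)\hookrightarrow\cC^{1+\delta/2}(D)$ holds, and $\gamma_1 q^*<1$ --- jointly possible exactly because $\frac dp+\frac 2q<1$ --- and then keep sufficiently careful track of the $\Gamma$--function factors to convert the bare convergence of $\sum_n d_n$ into the explicit exponential dependence on $\|b\|^q_{\mL^q_p}$ claimed in \eqref{a.16} and \eqref{016}, rather than merely a local-in-time existence statement.
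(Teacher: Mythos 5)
Your proposal is essentially correct, but it takes a genuinely different route from the paper. The paper does not iterate: it first establishes the a priori estimates by applying \eqref{b.14} of Theorem \ref{th12.1} directly to the equation rewritten as $\partial_t u+L^\sigma u = f-b\cdot\nabla u$, uses the embedding \eqref{sec2-eq1} to get $\|\nabla u(t)\|_{\cC^{\delta/2}(D)}^q\le C(T-t)^{q\delta/3}\bigl(\int_t^T\|b(s)\|^q_{L^p(D)}\|\nabla u(s)\|^q_{\cC^{\delta/2}(D)}\,ds+\|f\|^q+\|g\|^q\bigr)$, and closes with an ordinary (non-singular) Gronwall inequality, since \eqref{b.14} already carries the time integration; existence then follows by the standard method of continuity from the $b=0$ case, and \eqref{a.16} by feeding the gradient bound into \eqref{a.14}, with uniqueness immediate from linearity and the a priori bound. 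Your Picard scheme replaces the continuity method by a constructive fixed-point argument, which is a legitimate trade: it yields existence and uniqueness in one stroke (the short terminal-slab contraction), but it is heavier in two respects. First, your singular recursion $A_{n+1}(t)\le A_0(t)+C\int_t^T(r-t)^{-\gamma_1}\|b(r)\|_{L^p(D)}A_n(r)\,dr$ does not follow from the statement of Theorem \ref{th12.1}; \eqref{b.14} only gives the already-integrated bound, so you must reopen its proof and rerun the Green-function estimates \eqref{019.01}--\eqref{019.03} (essentially the computation \eqref{021}) for each mollified iterate and pass to the limit — doable, but it should be stated as part of the argument rather than cited. Second, the simplex/Mittag--Leffler bookkeeping you sketch is exactly what the paper's choice of applying \eqref{b.14} as a black box avoids; if you instead applied \eqref{b.14} directly to $u^{(n+1)}=\mathcal S(f-b\cdot\nabla u^{(n)},g)$ you would get the paper's non-singular recursion and a plain Gronwall series, shortening your proof considerably. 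Aside from these points (and a harmless mismatch of powers of $\|f\|,\|g\|$ in \eqref{016}, which is already present in the paper's own statement), your parameter choices, the use of the embedding, and the derivation of \eqref{a.16} from \eqref{a.14} match the paper's ingredients and are sound.
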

\begin{proof} By  standard continuity argument, we only need to prove the a priori estimates (\ref{a.16}) and (\ref{016}).
Letting $\delta:=\frac{1}{2}-\frac{d}{2p}-\frac{1}{q}>0$, by (\ref{sec2-eq1}), (\ref{b.14}) with suitable choices of $\beta$ and $\gamma$ such that $\beta-\frac{\delta+1}{2}>\frac{d}{\gamma}$, we have
\ce
\|\nabla u\|^q_{\mL^\infty_\infty([0,T]\times D)}&\leq& \|\nabla u(t)\|^q_{\cC^{\delta/2}(D)}
\leq C\|u(t)\|^q_{\mH^{\beta}_{\gamma}(D)}\\
&\leq& C(T-t)^{q\delta/3}\cdot\int_t^T\|(b\cdot \nabla u)(s)+f(s)\|^q_{L_p(D)}+\|g(s)\|^q_{L_p(D)}ds\\
&\leq& C(T-t)^{q\delta/3}\int_t^T\|b(s)\|^q_{L_p(D)}\cdot\|\nabla u(s)\|^q_{L_{\infty}(D)}ds\\
&&\qquad+C(T-t)^{q\delta/3}\left(\|f\|^q_{\mL^q_p([0,T]\times D)}+\|g\|^q_{\mL^q_p([0,T]\times D)}\right)\\
&\leq& C(T-t)^{q\delta/3}\int_t^T\|b(s)\|^q_{L_p(D)}\cdot\|\nabla u(s)\|^q_{\cC^{\delta/2}(D)}ds \\
&&\qquad+C(T-t)^{q\delta/3}\left(\|f\|^q_{\mL^q_p([0,T]\times D)}+\|g\|^q_{\mL^q_p([0,T]\times D)}\right)\,.
\de
Now  Gronwall's inequality implies
\begin{eqnarray}\label{sec3-16}
\|\nabla u(t)\|_{\cC^{\delta/2}(D)}&\leq& C_1(T-t)^{\delta/3}\exp\{C_1(T-t)^{q\delta/3}\|b\|^q_{\mL^q_p([0,T]\times D)}\} \nonumber\\
&&\qquad \times \left(\|f\|^q_{\mL^q_p([0,T]\times D)}+\|g\|^q_{\mL^q_p([0,T]\times D)}\right).
\end{eqnarray}
This is \eqref{016}.  Since
\ce
\partial_tu+L^\sigma u=f- b\cdot\nabla u,
\de
we  have  by (\ref{a.14}) and (\ref{sec3-16})
\ce
&&\|u-g\|_{\mW^{2,q}_p([0,T]\times D)}\\
&&\le C\left(\|f-b\cdot\nabla u \|_{\mL^q_p([0,T]\times D)}+ \|g\|_{\mW^{2,q}_p([0,T]\times D)}\right) \\
&&\le C\left(\|f\|_{\mL^q_p([0,T]\times D)}+\| b\|_{\mL^q_p([0,T]\times D)}\cdot\|\nabla u\|_{\mL^\infty_\infty([0,T]\times D)}\right)\\
&&\qquad\qquad +C\|g\|_{\mW^{2,q}_p([0,T]\times D)}\\
&&\leq C\left(\|b\|_{\mL^q_p([0,T]\times D)}\exp\{CT^{q\delta/3}\|b\|^q_{\mL^q_p([0,T]\times D)}\}+1\right) \\
&&\qquad\qquad \times\Big(\|f\|_{\mL^q_p([0,T]\times D)}+\|g\|_{\mL^q_p([0,T]\times D)}\Big)
+C\|g\|_{\mW^{2,q}_p([0,T]\times D)}.
\de
This is (\ref{a.16}).
\end{proof}

\section{\bf Krylov-type estimates}\label{s.4}
Our first result in this section is a localized version of the  Krylov type estimate for the solution of a
stochastic differential equation on a domain $D$ (before its first exit time from this domain).
\begin{theorem}\label{th19}
Suppose that $\sigma, b$ satisfy $({\bf H}^{\sigma})$ and $({\bf H}^b)$ on the domain $D$, respectively. Suppose that   $(X_t, 0\le t\le \tau_D)$ is the weak solution to
the following  stochastic differential equation:
\begin{equation}\label{sec4-equ01}
dX_t=b(t,X_t)dt+\sigma(t,X_t)dB_t,\qquad X_0=x\in D\,,
\end{equation}
where  $\tau_D$ is   the  first  exit time     of $X_t$ from $D$. Assume  $p,q\in(1,\infty)$ with $\frac{d}{p}+\frac{2}{q}<2$.
For any $\delta\in(0,1-\frac{d}{2p}-\frac{1}{q})$, there exists a positive constant $C=C(\kappa,\alpha,p,q,d, \delta  )$, such that for any $f\in\mL^q_p([0,T]\times D)$, we have
\begin{equation}\label{sec4-equ1}
\mE_x\left(\int_{r\wedge\tau_D}^{s\wedge\tau_D}\left| f(\eta,X_\eta)\right| d\eta  \mid\cF_{r\wedge\tau_D}\right)
\leq C(s-r)^\delta||f||_{\mL^q_p([0,T]\times D)}\,,\quad \forall \ 0\le r\le s\le T, x\in D\,.
\end{equation}
\end{theorem}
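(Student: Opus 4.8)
The plan is to read the estimate off from the solvability of the (zero-drift) Cauchy--Dirichlet problem of Section \ref{s.3} via a generalized It\^o formula applied up to the exit time $\tau_D$, and then to absorb the contribution of the drift by a self-improving inequality in the length of the time window. As a preliminary reduction one may assume $f\ge 0$ (replace $f$ by $|f|$); and since the identities below require the relevant conditional expectations to be a priori finite, one first runs the whole argument with $b$ replaced by a bounded truncation $b_n$ — for which $X$ has bounded drift and the classical Krylov estimate for It\^o processes already guarantees $\mE_x\int_0^T|g|(\eta,X_\eta)\,d\eta\le C_n\|g\|_{\mL^q_p((0,T)\times D)}<\infty$ for every $g\in\mL^q_p((0,T)\times D)$ — and only at the end removes the truncation. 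By translating in time it suffices to treat a fixed window $(r,s)\subseteq(0,T)$, and the conditioning on $\cF_{r\wedge\tau_D}$ will fall out of the It\^o formula, so no Markov property is used.

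Fix $\delta\in(0,1-\tfrac{d}{2p}-\tfrac1q)$ and choose $\beta\in[0,2)$, $\gamma>1$ near the borderline so that $\beta>\tfrac d\gamma$ and $\tfrac{2-\beta}{2}-\tfrac{d}{2p}-\tfrac1q+\tfrac{d}{2\gamma}>\delta$. By Theorem \ref{th12.1} applied on $(r,s)\times D$ with the zero-drift operator and zero terminal and boundary data, there is a unique $u\in\mW^{2,q}_p((r,s)\times D)$ with $\partial_tu+L^\sigma u=-f$ in $(r,s)\times D$, $u(s,\cdot)=0$ and $u=0$ on $(r,s)\times\partial D$; moreover, by \eqref{b.14} together with the Sobolev embedding $\mH^\beta_\gamma(D)\hookrightarrow C(\overline{D})$ (cf.\ \eqref{sec2-eq1}) and by \eqref{a.14},
\begin{equation*}
\|u\|_{L^\infty((r,s)\times D)}\le C\,(s-r)^{\delta}\,\|f\|_{\mL^q_p((r,s)\times D)},\qquad \|u\|_{\mW^{2,q}_p((r,s)\times D)}\le C\,\|f\|_{\mL^q_p((r,s)\times D)}\,.
\end{equation*}
Applying It\^o's formula to $t\mapsto u(t\wedge\tau_D,X_{t\wedge\tau_D})$ on $[r\wedge\tau_D,s\wedge\tau_D]$ — classical if $f$ is H\"older continuous, since then $u\in C^{1,2}((r,s)\times D)\cap C([r,s]\times\overline{D})$ by parabolic Schauder theory, and valid for $u\in\mW^{2,q}_p$ with $\tfrac dp+\tfrac2q<2$ by the standard mollification argument — and using $\partial_tu+L^\sigma u=-f$ together with $u(s\wedge\tau_D,X_{s\wedge\tau_D})=0$ (the terminal datum on $\{\tau_D\ge s\}$, the boundary datum on $\{\tau_D<s\}$), we obtain, after conditioning on $\cF_{r\wedge\tau_D}$ and discarding the (local-)martingale term,
\begin{equation*}
\mE_x\Big(\int_{r\wedge\tau_D}^{s\wedge\tau_D}f(\eta,X_\eta)\,d\eta\,\Big|\,\cF_{r\wedge\tau_D}\Big)=u(r\wedge\tau_D,X_{r\wedge\tau_D})+\mE_x\Big(\int_{r\wedge\tau_D}^{s\wedge\tau_D}(b\cdot\nabla u)(\eta,X_\eta)\,d\eta\,\Big|\,\cF_{r\wedge\tau_D}\Big),
\end{equation*}
whose first term on the right is bounded by $\|u\|_{L^\infty((r,s)\times D)}\le C(s-r)^{\delta}\|f\|_{\mL^q_p((r,s)\times D)}$.

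Everything now hinges on the drift term, and this is the crux. Write $b\in\mL^{q_0}_{p_0}$ with $\tfrac d{p_0}+\tfrac2{q_0}<1$ (the exponents of $({\bf H}^b)$) and set $\varepsilon_1:=1-(\tfrac d{p_0}+\tfrac2{q_0})>0$. Using the parabolic Sobolev embedding for $\nabla u\in\mW^{1,q}_p$ and H\"older's inequality against $b$ (and, in the regime where $\nabla u$ turns out to be bounded, the cruder splitting $|b\cdot\nabla u|\le\|\nabla u\|_\infty|b|$ combined with \eqref{b.14}), the quantity $\mE_x\big(\int_{r\wedge\tau_D}^{s\wedge\tau_D}|b\cdot\nabla u|(\eta,X_\eta)\,d\eta\mid\cF_{r\wedge\tau_D}\big)$ can be bounded by $\mE_x\big(\int_{r\wedge\tau_D}^{s\wedge\tau_D}|h|(\eta,X_\eta)\,d\eta\mid\cF_{r\wedge\tau_D}\big)$ for a function $h$ lying in a mixed-norm space whose scaling index is smaller than $\tfrac dp+\tfrac2q$ by the fixed amount $\varepsilon_1$, with $\|h\|\le C\|b\|_{\mL^{q_0}_{p_0}((r,s)\times D)}\|f\|_{\mL^q_p((r,s)\times D)}$. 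Let $M(h_*)$ be the supremum, over windows of length $\le h_*$, over $x\in D$, over exponents $(p',q')$ with $\tfrac d{p'}+\tfrac2{q'}<2$ and over $g\ge0$ in $\mL^{q'}_{p'}$, of $\mE_x\big(\int_{r\wedge\tau_D}^{s\wedge\tau_D}g(\eta,X_\eta)\,d\eta\mid\cF_{r\wedge\tau_D}\big)\big/\big((s-r)^{\delta(p',q')}\|g\|_{\mL^{q'}_{p'}((r,s)\times D)}\big)$ with $\delta(p',q'):=\tfrac12(1-\tfrac{d}{2p'}-\tfrac1{q'})$; then the preceding bound together with the displayed identity (applied with $(p,q)$ replaced by $(p',q')$) yields a self-improving inequality
\begin{equation*}
M(h_*)\le C_1+C_2\,h_*^{\varepsilon_1/4}\,M(h_*)\,,
\end{equation*}
with $C_1,C_2$ depending only on $d,p,q,\alpha,\kappa,\delta$ and $\|b\|_{\mL^{q_0}_{p_0}}$. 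Since $M(h_*)<\infty$ when the drift is bounded, this forces $M(h_*)\le 2C_1$ once $C_2h_*^{\varepsilon_1/4}\le\tfrac12$.

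It remains to pass from short windows to arbitrary $[r,s]$: partitioning into at most $T/h_*+1$ pieces of length $\le h_*$, applying the short-window bound on each piece, taking iterated conditional expectations and summing, and using that $\delta<1-\tfrac1q$ permits a short H\"older estimate in the time variable, one gets $M(T)<\infty$ with a constant of the desired form, which is \eqref{sec4-equ1} for the truncated drift; finally, removing the truncation — using the stability of the associated martingale problem (as in Theorem \ref{th22}), under which the uniform bound is inherited by the given weak solution $X$ — yields the theorem. The main obstacle is exactly this absorption of $b\cdot\nabla u$: because $f$ is only assumed in $\mL^q_p$ with $\tfrac dp+\tfrac2q<2$, the drift-included Cauchy--Dirichlet problem cannot be solved with the sharp estimates of Section \ref{s.3}, so the drift must be carried through the self-improving inequality, and it is precisely the subcriticality $\tfrac d{p_0}+\tfrac2{q_0}<1$ of $b$ that supplies the positive exponent gain making the iteration close; a secondary technical point is the justification of the generalized It\^o formula for $\mW^{2,q}_p$-solutions along the (singular) diffusion $X$, which itself rests on the Krylov bound first established in the bounded-drift case.
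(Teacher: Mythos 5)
Your route is genuinely different from the paper's: you solve only the \emph{zero-drift} Cauchy--Dirichlet problem (Theorem \ref{th12.1}) and then try to absorb the leftover term $\mE_x\big(\int_{r\wedge\tau_D}^{s\wedge\tau_D}(b\cdot\nabla u)(\eta,X_\eta)d\eta\mid\cF_{r\wedge\tau_D}\big)$ by a truncation-plus-self-improvement scheme, whereas the paper applies Theorem \ref{th17} to the equation \emph{with} the drift, $\partial_tu+L^\sigma u+b\cdot\nabla u+f=0$, so that after (a mollified) It\^o formula there is no drift term left at all: the estimate then reads off directly from the sup-norm bound \eqref{sec4-eq004} coming from \eqref{sec2-eq1} and \eqref{016}, with the regularity issue handled by first taking $f\in\mL^{d+1}_{d+1}\cap\mL^q_p$ (density), mollifying $u$, and invoking the classical Krylov estimate at exponent $d+1$.

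The crux of your argument, however, has a genuine gap. You define $M(h_*)$ as a supremum over \emph{all} exponent pairs $(p',q')$ with $\tfrac{d}{p'}+\tfrac{2}{q'}<2$, assert that $M(h_*)<\infty$ for bounded drift, and claim the inequality $M(h_*)\le C_1+C_2h_*^{\varepsilon_1/4}M(h_*)$ with $C_1,C_2$ independent of the pair. Neither assertion is justified: the constants in \eqref{b.14} and in the embeddings degenerate as $\tfrac{d}{p'}+\tfrac{2}{q'}\uparrow 2$ (the exponent $(2-\beta)/2-d/(2p')-1/q'+d/(2\gamma)$ must stay positive), so even for bounded drift the Krylov constants blow up along the critical line and the supremum defining $M(h_*)$ is not known to be finite, while pair-uniform $C_1,C_2$ are exactly what the degenerating estimates fail to give. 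The absorption could be salvaged by replacing the full supremum with the finite chain of pairs generated by $(p,q)\mapsto(p_2,q_2)$ (index dropping by $\varepsilon_1$ each step until it falls below $1$, after which $\nabla u$ is bounded and the drift term lands at the pair of $({\bf H}^b)$), but you do not articulate this, and without it the self-improving inequality does not close. Secondary, but still real, gaps: the It\^o formula for $u\in\mW^{2,q}_p$ with possibly $p,q<d+1$ is not obtained by ``the standard mollification argument'' alone — passing to the limit needs a Krylov bound at those mixed exponents, which is circular unless you first do the paper's density reduction to $f\in\mL^{d+1}_{d+1}\cap\mL^q_p$; the mixed-norm parabolic embedding $\|\nabla u\|_{\mL^{q_1}_{p_1}}\lesssim\|u\|_{\mW^{2,q}_p}$ on $(r,s)\times D$ with window-independent constant is used but never established; and the final transfer of the estimate from the truncated-drift solutions $X^{b_n}$ to the given weak solution $X$ via Theorem \ref{th22} needs an actual limiting argument for $\int f(\eta,X^{b_n}_\eta)d\eta$ with $f$ merely in $\mL^q_p$ (plus control of the exit times), which is only gestured at. Finally, your $M$ tracks only the exponent $\tfrac12(1-\tfrac{d}{2p'}-\tfrac1{q'})$, so recovering every $\delta\in(0,1-\tfrac{d}{2p}-\tfrac1q)$ as in \eqref{sec4-equ1} requires a further pass through the identity that you do not carry out.
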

\begin{proof}
Let $p'=d+1$. Since $\mL^{p'}_{p'}([0,T]\times D)\cap\mL^q_p([0,T]\times D)$ is dense in $\mL^q_p([0,T]\times D)$, it suffices  to  show  (\ref{sec4-equ1}) for
\ce
f\in\mL^{p'}_{p'}([0,T]\times D)\cap\mL^q_p([0,T]\times D).
\de
Fix   $s\in[0,T]$.   By Theorem \ref{th17}, there exists a unique solution $u\in\mW^{2,p'}_{p'}([0,s]\times D)\cap\mW^{2,q}_p([0,s]\times D)$ to  the following   parabolic partial differential equation  on $[0,s]$
with terminal condition
\begin{equation}\label{sec4-eq02}
\left\{
                    \begin{array}{lll}
                     \partial_t u+L^\sigma u+b\cdot\nabla u+f=0,   ~~~in~~~[0,s]\times D,\\
                     u(t,x)=0, ~~~on~~~ [0,s]\times \partial D.\\
                    \end{array}
                    \right.
\end{equation}
Moreover, there  exists  some constant $C=C(K,\alpha,p,q,d)$  such that for all $r\in [0, s]$,
\begin{equation}\label{sec4-eq03}
||\partial_tu||_{\mL^{p'}_{p'}([r,s]\times D)}+||u||_{\mL^{p'}_{p'}([r,s]\times D)}+||\nabla^2u||_{\mL^{p'}_{p'}([r,s]\times D)}\leq C||f||_{\mL^{p'}_{p'}([r,s]\times D)}
\end{equation}
and
\ce
||\partial_tu||_{\mL^{q}_{p}([r,s]\times D)}+||u||_{\mL^{q}_{p}([r,s]\times D)}+||\nabla^2u||_{\mL^{q}_{p}([r,s]\times D)}\leq C||f||_{\mL^{q}_{p}([r,s]\times D)}.
\de
In particular, by (\ref{sec2-eq1}) and (\ref{016}) with vanishing boundary, for any $\delta\in(0,1-\frac{d}{2p}-\frac{1}{q})$, with suitable choices of $\beta$ and $\gamma$ such that $\beta-\frac{\delta+1}{2}>\frac{d}{\gamma}$  we  have
\begin{equation}\label{sec4-eq004}
\sup_{t\in[r,s]}||u(t,\cdot)||_{L_{\infty}(D)}\leq\sup_{t\in [r,s]}||u(t,\cdot)||_{\mH^\beta_p(D)}\leq C(s-r)^\delta||f||_{\mL^{q}_{p}([r,s]\times D)}\,. 
\end{equation}
Now we introduce a     space and time mollifier.
Let $\rho$ be a nonnegative smooth function in $\mR_+^{d+1}$ with support in $\{(t,x)\in\mR_+^{d+1}:(t,x)\in[0,T]\times D\}$ and $\int_{\mR_+^{d+1}}\rho(t,x)dtdx=1$. Set $\rho_n=n^{d+1}\rho(nt,nx)$ and define
\begin{equation}
u_n(t,x):=u\ast\rho_n(t,x) \label{e.def_un}
\end{equation}
and
\begin{equation}\label{sec4-a.4}
f_n(t,x):=-(\partial_tu_n(t,x)+L^\sigma u_n(t,x)+b\cdot\nabla u_n(t,x))\,.
\end{equation}
[We use the same notations as previous section without confusion.] \
Then by  the property of convolutions, we have
\ce
&&||f_n-f||_{\mL^{p'}_{p'}([0,T] \times D)}\\
&\leq&||\partial_t(u_n-u)||_{\mL^{p'}_{p'}([0,T] \times D)}+||L^\sigma u_n-L^\sigma u||_{\mL^{p'}_{p'}([0,T] \times D)}+||b\cdot\nabla(u_n-u)||_{\mL^{p'}_{p'}([0,T] \times D)}\\
&\leq&||\partial_t(u_n-u)||_{\mL^{p'}_{p'}([0,T] \times D)}+\kappa||\nabla^2(u_n-u)||_{\mL^{p'}_{p'}([0,T] \times D)}\\
&&\qquad+||b||_{\mL^{p'}_{p'}([0,T] \times D)}||\nabla(u_n-u)||_{\mL^{\infty}_{\infty}([0,T] \times D)}\\
&\leq&||\partial_tu\ast\rho_n-\partial_tu||_{\mL^{p'}_{p'}([0,T] \times D)}+\kappa||\nabla^2u\ast\rho_n-\nabla^2u||_{\mL^{p'}_{p'}([0,T] \times D)}\\
&&\qquad+||b||_{\mL^{p'}_{p'}([0,T] \times D)}||\nabla u\ast\rho_n-\nabla u||_{\mL^{\infty}_{\infty}([0,T] \times D)}\\
&\leq&||f\ast\rho_n-f||_{\mL^{p'}_{p'}([0,T] \times D)}+2\kappa||\nabla^2u\ast\rho_n-\nabla^2u||_{\mL^{p'}_{p'}([0,T] \times D)}\\
&&\qquad+2||b||_{\mL^{p'}_{p'}([0,T]  \times D)}||\nabla u\ast\rho_n-\nabla u||_{\mL^{\infty}_{\infty}([0,T] \times D)}\\
&\rightarrow&0 ~~~as ~~~n\rightarrow\infty.
\de
Now we use the following   classical Krylov's estimate \cite[Theorem 4, Page 54]{Krylov} which states
\ce
\mE\left( \int_0^{s\wedge\tau_D}f(\eta,X_{\eta})d\eta\right) \leq C||f||_{\mL^{p}_{p}([0,T] \times D)}
\de
for any  $p\ge d$,  where the constant $C=C(d, p,\kappa, \diam(D))$ depends on $d, p, \kappa, \diam(D)$ since we assume $f\in\mL^{p'}_{p'}([0,T] \times D)\cap\mL^q_p([0,T] \times D)$.

Thus
\begin{equation}\label{sec4-eq005}
\lim_{n\rightarrow\infty}\mE\left(\int_{r\wedge\tau_D}^{s\wedge\tau_D}|f_n(\eta,X_{\eta})-f(\eta,X_{\eta})|d\eta\right)\leq C\lim_{n\rightarrow\infty}||f_n-f||_{\mL^{p'}_{p'}([0,T] \times D)}=0.
\end{equation}
Now applying  It\^o's formula to $u_n(t,X_t)$, \, $0<t<T$, and by the definition (\ref{sec4-a.4}) of $f_n$ we have
\ce
u_n(t,X_t)=u_n(0,X_0)+\int_0^tf_n(s,X_s)ds+\sum^d_{k=1}\int_0^t\partial_iu_n(s,X_s)\sigma^{ik}(s,X_s)dB^k_s\,.
\de
In view of (\ref{sec4-eq004}),  we have
\ce
&&\sup_{\eta\in[r,s],x\in D}|\partial_iu_n(\eta,x)|\\
&\leq& n^{2d+1}\sup_{\theta\in(0,\eta-\frac{T}{n}),y\in \frac{D}{n}+x }|u(\eta,y)|\sup_{\eta\in[r,s],x\in D}\int_0^{\eta-\frac{T}{n}}\int_{\frac{D}{n}+x}(\partial_i\rho)(n(\eta-\theta),n(x-y))dyd\theta \\
&\leq& C_n
\de
for some constant $C_n$ which may depend on $n$. Thus,  we can apply  Doob's optional sampling theorem
to obtain
\ce
\mE\left(\int_{r\wedge\tau_D}^{s\wedge\tau_D}\partial_iu_n(\eta,X_{\eta})\sigma^{ik}(\eta,X_{\eta})dB^k_{\eta}\mid\cF_{r\wedge\tau_D}\right)=0.
\de
Hence,  we have
\ce
\left|\mE\left(\int_{r\wedge\tau_D}^{s\wedge\tau_D}f_n(\eta,X_{\eta})d\eta\mid\cF_{r\wedge\tau_D}\right)\right|&=&\left|\mE\left(u_n(s\wedge\tau_D,X_{s\wedge\tau_D})-u_n(r\wedge\tau_D,X_{r\wedge\tau_D})\mid\cF_{r\wedge\tau_D}\right)\right|\\
&\leq&2\sup_{\eta\in[r,s] ,x\in D}|u_n(\eta,x)|\leq2\sup_{\eta\in[r,s] ,x\in D}|u(\eta,x)|\\
&\leq& C(s-r)^\delta||f||_{\mL^{q}_{p}([r,s] \times D)}\,,
\de
where the  last inequality follows from   (\ref{sec4-eq004}).
Finally,   letting $n\rightarrow\infty$ and using (\ref{sec4-eq005})  we see
\ce
\left|\mE\left(\int_{r\wedge\tau_D}^{s\wedge\tau_D}f(\eta,X_{\eta})d\eta\mid\cF_{r\wedge\tau_D}\right)
\right|\leq C(s-r)^\delta||f||_{\mL^{q}_{p}([0,T] \times D)}.
\de
Since $f\in\mL^q_p([0,T] \times D)$ implies $|f|\in\mL^q_p([0,T] \times D)$ this proves the theorem.
\end{proof}

Next,  we  give a local stability result for  the solutions to equation
(\ref{sec4-equ01}).
Namely, we want to know how the solution depends
on the coefficients of the equation.
Consider two stochastic differential equations
\begin{equation}\label{two_equations}
\left\{\begin{split}
dX_t=&b(t,X_t)dt+\sigma(t,X_t)dB_t,\qquad X_0=x\,;\\
dX_t'=&b'(t,X_t')dt+\sigma'(t,X_t')dB_t,\qquad X_0'=x\, .
\end{split}\right.
\end{equation}
\begin{definition} Let $b,  b'$ satisfy  $({\bf H}^b)$  on $D$ and
$\si, \si'$ satisfy $({\bf H}^\sigma)$ on $D$.  We say that  the two equations have   tied weak solutions
if one can find a common probability space $(\Om, \cF, \PP)$  and a common Brownian
motion $(B_t, t\ge 0)$, and two stochastic processes $(X_t, 0\le t\le \tau_D)$ and
$(X_t', 0\le t\le  \tau_D')$  such that
\begin{equation}\label{two_weak}
\left\{\begin{split}
 X_{t\wedge \tau_D} =&x+\int_0^{t\wedge \tau_D}
 b(s,X_s)ds+\int_0^{t\wedge \tau_D} \sigma(s,X_s)dB_s \,;\\
 X_{t\wedge \tau_D'}' =&x'+\int_0^{t\wedge \tau_D'}
 b'(s,X_s')ds+\int_0^{t\wedge \tau_D } \sigma '(s,X_s')dB_s \, .
\end{split}\right.
\end{equation}
\end{definition}

First,  we will give a local stability result for  the solutions to
the equation  without drift coefficients.  Assume that $X^\sigma$ and $X^{\sigma'}$ are
two tied weak
solutions to \eqref{sec4-equ01} with drift term $b'=b=0$ and with two different coefficients $\sigma$ and $ \sigma'$.  We want to bound the difference of the solutions
$X^\sigma-X^{\sigma'}$  by the  Sobolev norm of $\sigma-\sigma'$. We cannot no longer use  Burkh\"older's  inequality  since   when $0<p_0<1$
we can only bound $\mE\left(\sup_{s\in[0,T ]}|X_s^{\sigma}(x)-X_s^{\sigma'}(x)|^{p_0}\right) $  by
$\mE\left(\int_0^t||\sigma(r,X^\sigma_r)-\sigma'(r,X^{\sigma'}_r)||^2dr\right)^{p_0/2} $
which cannot be bounded by $||\sigma-\sigma'||^{p_0}_{\mL^q_p([0,T] \times D)}  $.
Instead,   we   shall  use Lemma \ref{le6.2} in Appendix for  positive constant $p_0$ in place of  Burkh\"older's  inequality.

\begin{lemma}\label{le20} Assume that $\sigma, \sigma'$ satisfy $({\bf H}^\sigma)$ and let  $p,q\in (1,\infty)$ with $\frac{d}{p}+\frac{2}{q}<1$.
Let $X^{\sigma}$, $X^{\sigma'}$ be two tied  weak solutions to  equation (\ref{sec4-equ01}) with  $b'=b=0$, with the same initial condition,
 and with diffusion coefficients $\sigma$, $\sigma'$, respectively.
 Let
$$
 \tau_D=\inf \left\{ t\ge 0\,; \quad  X_t\not \in D\quad \hbox{or}\quad X_t'\not\in D\right\}\,.
$$
Then,    for any $p_0>0$,
there exists a constant $C=C(p_0, p,q,d,\kappa,\alpha)>0$  such that
\ce
\sup_{x\in D}\mE\left(\sup_{s\in[0,T\wedge\tau_D]}|X_s^{\sigma}(x)-X_s^{\sigma'}(x)|^{p_0}\right)\leq C||\sigma-\sigma'||^{p_0}_{\mL^q_p([0,T] \times D)}.
\de
\end{lemma}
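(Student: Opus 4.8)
The strategy is to combine an It\^o/maximal-function estimate for the difference of the two solutions with the stochastic-Gronwall/Khasminskii machinery, using the localized Krylov estimate of Theorem \ref{th19} as the workhorse. Set $Y_t:=X^\sigma_t(x)-X^{\sigma'}_t(x)$ and abbreviate $\tau:=\tau_D$. Since $b=b'=0$ and the two equations are driven by the same $B$, \eqref{two_weak} gives
$$Y_{t\wedge\tau}=\int_0^{t\wedge\tau}\Big(\sigma(s,X^\sigma_s)-\sigma(s,X^{\sigma'}_s)\Big)\,dB_s+\int_0^{t\wedge\tau}\big(\sigma-\sigma'\big)(s,X^{\sigma'}_s)\,dB_s .$$
I would treat the first (``regularity'') integral by the Sobolev maximal-function inequality in the form valid on the bounded domain $D$ recorded in the Appendix: for a.e.\ pair of points (the exceptional set being negligible in view of Theorem \ref{th19}),
$$\big\|\sigma(s,X^\sigma_s)-\sigma(s,X^{\sigma'}_s)\big\|_{HS}\le C\,|Y_s|\,\Big(\cM|\nabla\sigma|(s,X^\sigma_s)+\cM|\nabla\sigma|(s,X^{\sigma'}_s)\Big)=:C\,|Y_s|\,\Xi_s ,$$
after first extending $\sigma$ to $\mR^d$ via Proposition \ref{prop6.01} if one prefers to work on the whole space. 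The crucial feature is that $\Xi_s$ is evaluated only at the two genuine SDE solutions, so that Theorem \ref{th19} applies to it; note also $\cM|\nabla\sigma|\in\mL^q_p$ because $\nabla\sigma\in\mL^q_p$ and the maximal operator is bounded on $L^p$ for $p>1$.

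Next, applying It\^o's formula to $|Y_{t\wedge\tau}|^2$ and using $\|\sigma(s,X^\sigma_s)-\sigma'(s,X^{\sigma'}_s)\|_{HS}^2\le 2C^2\Xi_s^2|Y_s|^2+2\|(\sigma-\sigma')(s,X^{\sigma'}_s)\|_{HS}^2$, one gets
$$|Y_{t\wedge\tau}|^2\le \cM_t+C\int_0^{t\wedge\tau}\Xi_s^2|Y_s|^2\,ds+C\int_0^{t\wedge\tau}\big\|(\sigma-\sigma')(s,X^{\sigma'}_s)\big\|_{HS}^2\,ds ,$$
with $\cM$ a local martingale. I would then feed this into Lemma \ref{le6.2} (the substitute for Burkholder's inequality valid for every $p_0>0$), reducing the statement to two tasks: (a) bounding the exponential moment $\mE\exp\big(c\int_0^{T\wedge\tau}\Xi_s^2\,ds\big)$ produced by the $\Xi_s^2|Y_s|^2$ term, and (b) bounding $\mE\int_0^{T\wedge\tau}\|(\sigma-\sigma')(s,X^{\sigma'}_s)\|_{HS}^2\,ds$, the outcome being of the schematic shape $\mE\sup_{s\le T\wedge\tau}|Y_s|^{p_0}\le C\big(\mE e^{c\int\Xi^2}\big)^{1-\theta}\big(\mE\int\|\cdot\|_{HS}^2\big)^{\theta}$ with $\theta=\theta(p_0)\in(0,1]$ (the case $p_0\ge 2$ being easier, since the classical Burkholder inequality may then be used directly).

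For task (b), $|\sigma-\sigma'|^2\in\mL^{q/2}_{p/2}((0,T)\times D)$ and $\tfrac{d}{p/2}+\tfrac{2}{q/2}=2\big(\tfrac dp+\tfrac2q\big)<2$ --- this is precisely where the hypothesis $\tfrac dp+\tfrac2q<1$ enters --- so Theorem \ref{th19} yields $\mE\int_0^{T\wedge\tau}\|(\sigma-\sigma')(s,X^{\sigma'}_s)\|_{HS}^2\,ds\le C\|\sigma-\sigma'\|_{\mL^q_p((0,T)\times D)}^2$ uniformly in $x\in D$. For task (a), $\Xi_s^2\le C\big((\cM|\nabla\sigma|)^2(s,X^\sigma_s)+(\cM|\nabla\sigma|)^2(s,X^{\sigma'}_s)\big)$ with $(\cM|\nabla\sigma|)^2\in\mL^{q/2}_{p/2}$, so Theorem \ref{th19} controls the conditional integrals $\mE\big(\int_{r\wedge\tau}^{s\wedge\tau}\Xi_\eta^2\,d\eta\mid\cF_{r\wedge\tau}\big)\le C(s-r)^{\delta}\|(\cM|\nabla\sigma|)^2\|_{\mL^{q/2}_{p/2}}$, and a standard Khasminskii-type iteration upgrades this to a finite exponential moment $\mE\exp\big(c\int_0^{T\wedge\tau}\Xi_s^2\,ds\big)\le C$. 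Combining (a) and (b) gives $\sup_{x\in D}\mE\sup_{s\le T\wedge\tau}|Y_s|^{p_0}\le C\|\sigma-\sigma'\|_{\mL^q_p((0,T)\times D)}^{p_0}$. I expect the main obstacle to be the bookkeeping in this last step: one must run the stochastic-Gronwall and Khasminskii arguments hand in hand with the \emph{localized} Krylov estimate (whose conditional formulation in Theorem \ref{th19} is tailored for exactly this purpose), while checking throughout that the maximal-function inequality and the Krylov estimate are invoked in their versions valid on the bounded domain $D$ up to the exit time $\tau_D$, not their whole-space analogues.
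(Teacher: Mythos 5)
Your proposal follows essentially the same route as the paper's proof: the same splitting of $\sigma(X^\sigma)-\sigma'(X^{\sigma'})$ into a regularity part controlled by $|Y_s|\bigl(\cM_D|\nabla\sigma|(X^\sigma_s)+\cM_D|\nabla\sigma|(X^{\sigma'}_s)\bigr)$ and a forcing part $(\sigma-\sigma')(X^{\sigma'}_s)$, It\^o's formula for the squared difference, the localized Krylov estimate of Theorem \ref{th19} applied to $|\nabla\sigma|^2$ and to powers of $|\sigma-\sigma'|^2$, and the Appendix machinery (Lemma \ref{le6.2}, with the Khasminskii-type exponential moment of Lemma \ref{le6.1} behind it) in place of Burkholder for arbitrary $p_0>0$. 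The only differences are bookkeeping: the paper keeps the exact decomposition into $\xi,\eta,\beta,\alpha$ so that Lemma \ref{le6.2} applies verbatim and closes the argument by Cauchy--Schwarz plus an absorption of $\tfrac12\mE\sup|Z|^{2p_0}$, whereas you sketch this last interpolation schematically, which is routine to make precise.
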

\begin{proof}Set $Z_t=X_t^\sigma-X_t^{\sigma'}$. Then
\ce
Z_t=\int_0^t\left(\sigma(r,X^\sigma_r)-\sigma'(r,X^{\sigma'}_r)\right)dB_r.
\de
By It\^o's formula, we have
\ce
|Z_t|^2&=&\int_0^t||\sigma(r,X^\sigma_r)-\sigma'(r,X^{\sigma'}_r)||^2dr+2\int_0^t\left(\sigma(r,X^\sigma_r)-\sigma'(r,X^{\sigma'}_r)\right)^tZ_rdB_r\\
&=&\int_0^t\xi(r)dr+\int_0^t\eta(r)dB_r+\int_0^t|Z_r|^2\beta(r)dr+\int_0^t|Z_r|^2\alpha(r)dB_r,
\de
where
\begin{eqnarray}\label{0}
\xi(r)&:=&  ||\sigma(r,X^\sigma_r)-\sigma'(r,X^{\sigma'}_r)||^2- ||\sigma(r,X^\sigma_r)-\sigma(r,X^{\sigma'}_r)||^2,  \nonumber \\
\eta(r)&:=&2(\sigma(r,X^{\sigma'}_r)-\sigma'(r,X^{\sigma'}_r))^tZ_r,\nonumber \\
\beta(r)&:=& ||\sigma(r,X^\sigma_r)-\sigma(r,X^{\sigma'}_r)||^2/|Z_r|^2,\nonumber \\
\alpha(r)&:=&2(\sigma(r,X^\sigma_r)-\sigma(r,X^{\sigma'}_r))^tZ_r/|Z_r|^2.
\end{eqnarray}
Here, we have used the convention $\frac{0}{0}:=0$, that is, if   $\beta(r)=\alpha(r)=0$,
then $|Z_r|=0$.\\

By  the  inequality (\ref{sec4-equ1}) and by the fact that the local maximal operator $\cM_D$ is bounded on $\mL^p_q([0,T] \times D)$ for any $p\,, q>1$, we have that for any $0\leq s<t\leq T$,
\begin{eqnarray*}
&&\mE\left(\int_{s\wedge\tau_D}^{t\wedge\tau_D}(|\beta(r)|+|\alpha(r)|^2)dr\mid\cF_{s\wedge\tau_D}\right)  \\
&&\leq C\mE\left(\int_{s\wedge\tau_D}^{t\wedge\tau_D}\left(\cM_D|\nabla\sigma|^2(X_r^\sigma)+\cM_D|\nabla\sigma|^2(X_r^{\sigma'})\right)dr\mid\cF_{s\wedge\tau_D}\right)\\
&&\leq C(t-s)^\delta||\cM_D|\nabla\sigma|^2||_{\mL^{q/2}_{p/2}([0,T] \times D)}\leq C(t-s)^\delta|||\nabla\sigma|^2||_{\mL^{q/2}_{p/2}([0,T] \times D)}\\
&&\leq C(t-s)^\delta||\nabla\sigma||^2_{\mL^{q}_{p}([0,T] \times D)}\,,
\end{eqnarray*}
where $\cM_D$ is the local maximal operator (e.g.(\ref{sec6-eq01})). Now we want to bound $\xi^+$.
For any $\gamma\in(1\vee p_0,1/(2/q+d/p)\vee p_0)$, we have by   Theorem \ref{th19}
\begin{eqnarray*}
&&\mE\left(\int_{s\wedge\tau_D}^{t\wedge\tau_D}||\sigma(r,X^{\sigma'}_r)-\sigma'(r,X^{\sigma'}_r)||^2dr\right)^{\gamma}\nonumber \\
& &\qquad\le \mE\left(\int_{s\wedge\tau_D}^{t\wedge\tau_D}||\sigma(r,X^{\sigma'}_r)-\sigma'(r,X^{\sigma'}_r)||^{2\gamma}dr\right)\nonumber \\
& &\qquad\le C(t-s)^\delta||||\sigma-\sigma'||^{2\gamma}||_{\mL^{q/(2\gamma)}_{p/(2\gamma)}([0,T] \times D)}
\nonumber \\
& &\qquad= C(t-s)^\delta||\sigma-\sigma'||^{2\gamma}_{\mL^q_p([0,T] \times D)}\,.
\end{eqnarray*}
This implies
\begin{equation}
\EE \left[\int_0^{T\wedge \tau} \xi^+(r) dr\right]^\gamma  \le ||\sigma-\sigma'||^{2\gamma}_{\mL^q_p([0,T] \times D)}\,. \label{sect4-equ05}
\end{equation}
Using  inequality (\ref{eq6.2}) in Appendix (Lemma \ref{le6.2})  with $p_0>0, \gamma_2=\gamma/p_0$ and $\gamma_3=\frac{2\gamma_2}{\gamma_2+1}$  we obtain
\begin{eqnarray}\label{sect4-equ06}
&&\mE\left(\sup_{s\in[0,T\wedge\tau_D]}|Z_s|^{2p_0}\right)\nonumber \\
&\leq& C\left(\left |\left |\left (\int_0^{T\wedge\tau_D}\xi^{+}(r)dr\right )^{p_0}\right |\right |_{L_{\gamma_2}(\Omega)}+\left |\left |\left (\int_0^{T\wedge\tau_D}|\eta(r)|^2dr\right )^{p_0/2}\right |\right |_{L_{\gamma_3}(\Omega)}\right)\nonumber \\
&\leq& C\left|\left|\left(\int_0^{T\wedge\tau_D}|Z_r|^2||\sigma(r,X^{\sigma'}_r)-\sigma'(r,X^{\sigma'}_r)||^2dr\right)^{p_0/2 }\right|\right|_{L_{\gamma_3}(\Omega)} \nonumber \\
&\;&+ C ||\sigma-\sigma'||^{2p_0}_{\mL^q_p([0,T] \times D)}\,.
\end{eqnarray}
Now   using  H\"older's inequality,  we have
\begin{eqnarray}
&& \mE\left(\sup_{s\in[0,T\wedge\tau_D]}|Z_s|^{2p_0}\right)\nonumber \\
&\leq&C\left(\mE\sup_{s\in[0,T\wedge\tau_D]}|Z_s|^{2p_0}\right)^{1/2}\times\left|\left|\left(\int_0^{T\wedge\tau_D}||\sigma(r,X^{\sigma'}_r)-\sigma'(r,X^{\sigma'}_r)||^2dr\right)^{p_0/2}\right|\right|_{L_{\gamma_2}(\Omega)} \nonumber \\
&\;&+ C ||\sigma-\sigma'||^{2p_0}_{\mL^q_p([0,T] \times D)} \nonumber \\
&\leq&C\left(\mE\sup_{s\in[0,T\wedge\tau_D]}|Z_s|^{2p_0}\right)^{1/2}\times\left[ ||\sigma-\sigma'||^{2p_0}_{\mL^q_p([0,T] \times D)}\right]^{1/2} + C ||\sigma-\sigma'||^{2p_0}_{\mL^q_p([0,T] \times D)} \nonumber \\
&\leq&\frac{1}{2}\left(\mE\sup_{s\in[0,T\wedge\tau_D]}|Z_s|^{2p_0}\right) + C ||\sigma-\sigma'||^{2p_0}_{\mL^q_p([0,T] \times D)} \,. \nonumber\\
\end{eqnarray}
Thus, we obtain for any $p_0>0$,
\ce
\mE\left(\sup_{s\in[0,T\wedge\tau_D]}|Z_s|^{2p_0}\right)\leq 2C||\sigma-\sigma'||^{2p_0}_{\mL^q_p([0,T]\times D)}.
\de
This proves the lemma.
\end{proof}

The next theorem is about the stability on the drift coefficients of \eqref{sec4-equ01} on domain $D$.
\begin{theorem}\label{th21} Assume that $b, b'$ satisfy $({\bf H}^b)$  on the domain $D$  with   $p, q$ satisfying condition $\frac{d}{p}+\frac{2}{q}<1$, and assume that  $\sigma$ satisfies $({\bf H}^\sigma)$ on $D$. Let $X_t^{b,\sigma}$ and $X_t^{b',\sigma}$ be two tied weak  solutions to (\ref{sec4-equ01}) associated with coefficient pairs $(b,\sigma)$,  $(b',\sigma)$, respectively. Then for any $p_0>0$,
\begin{equation}\label{sect4-equ06}
\sup_{x\in D}\mE\left(\sup_{s\in[0,T\wedge\tau_D]}|X_s^{b', \sigma}(x)-X_s^{b, \sigma}(x)|^{p_0}\right)\leq C||b-b'||^{p_0}_{\mL^q_p([0,T]\times D)}.
\end{equation}
\end{theorem}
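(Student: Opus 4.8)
\emph{Plan of proof.} Since $b,b'$ are only locally integrable (and possibly unbounded), one cannot compare $X^{b,\sigma}$ and $X^{b',\sigma}$ directly by It\^o's formula and Gronwall's inequality the way Lemma~\ref{le20} compares two diffusions with zero drift. The plan is therefore to first remove the drift by a \emph{localized} Zvonkin transformation built from the Cauchy--Dirichlet problem of Section~\ref{s.3}; this turns the pair into two It\^o processes driven by the same Brownian motion, with the same H\"older, non-degenerate diffusion coefficient and a drift perturbation of order $b-b'$. One then runs the scheme of Lemma~\ref{le20}, with the localized Krylov estimate (Theorem~\ref{th19}) playing the role of Burkholder's inequality and Lemma~\ref{le6.2} replacing the Burkholder--Davis--Gundy bound at low moments.

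First I would build the change of variables. Fix $T_1<T$ and, applying Theorem~\ref{th17} componentwise on $(T_1,T)\times D$, let $u=(u^1,\dots,u^d)$ solve $\partial_tu+L^\sigma u+b\cdot\nabla u=-b$ with vanishing lateral and terminal data. By the gradient bound \eqref{016} one may take $T-T_1$ small --- with a lower bound on $T-T_1$ depending only on $d,p,q,\alpha,\kappa,\rho_0,\diam(D)$ and $\|b\|_{\mL^q_p((0,T)\times D)}$ --- so that $\sup_{t\in[T_1,T]}\|\nabla u(t,\cdot)\|_\infty\le\frac12$; then $\Phi(t,x):=x+u(t,x)$ is, for each $t\in[T_1,T]$, a $C^1$-diffeomorphism of $\overline D$ onto $\Phi(t,\overline D)$ taking $\partial D$ onto the corresponding boundary, with $\|\nabla\Phi\|_\infty+\|\nabla\Phi^{-1}\|_\infty\le C$. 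Applying It\^o's formula to a mollification of $u$ and passing to the limit exactly as in the proof of Theorem~\ref{th19}, and using the equation for $u$, the processes $Y_t:=\Phi(t,X^{b,\sigma}_t)$ and $Y_t':=\Phi(t,X^{b',\sigma}_t)$ satisfy on $[T_1,T\wedge\tau_D]$
\[
dY_t=\widetilde\sigma(t,Y_t)\,dB_t,\qquad dY_t'=h(t,Y_t')\,dt+\widetilde\sigma(t,Y_t')\,dB_t,
\]
where $\widetilde\sigma:=[(I+\nabla u)\sigma]\circ\Phi^{-1}$ still satisfies $(\mathbf{H}^\sigma)$ (in particular $\nabla\widetilde\sigma\in\mL^q_p$), and $h:=[(b'-b)(I+\nabla u)]\circ\Phi^{-1}$, so that $|h(t,Y_t')|\le C\,|(b'-b)(t,X^{b',\sigma}_t)|$. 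It is convenient to keep all such quantities expressed along the trajectory of $X^{b',\sigma}$, so that Theorem~\ref{th19} applies on $D$ itself and one need not re-examine its hypotheses on the merely $C^1$ domain $\Phi(t,D)$.

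Next I would set $Z_t:=Y_t-Y_t'$ and apply It\^o's formula to $|Z_t|^2$. With $\beta(r):=\|\widetilde\sigma(r,Y_r)-\widetilde\sigma(r,Y_r')\|^2/|Z_r|^2$, $\alpha(r):=2(\widetilde\sigma(r,Y_r)-\widetilde\sigma(r,Y_r'))^tZ_r/|Z_r|^2$ (with $0/0:=0$), and Young's inequality $|2Z_r\cdot h(r,Y_r')|\le|Z_r|^2+|h(r,Y_r')|^2$, one gets on $[T_1,T\wedge\tau_D]$
\[
|Z_t|^2\le|Z_{T_1}|^2+\int_{T_1}^{t}|h(r,Y_r')|^2\,dr+\int_{T_1}^{t}|Z_r|^2\big(1+\beta(r)\big)\,dr+\int_{T_1}^{t}|Z_r|^2\alpha(r)\,dB_r.
\]
The Morrey-type inequality $\|\widetilde\sigma(r,y)-\widetilde\sigma(r,y')\|\le C|y-y'|\big(\cM_D|\nabla\widetilde\sigma|(r,y)+\cM_D|\nabla\widetilde\sigma|(r,y')\big)$, boundedness of $\cM_D$ on $\mL^q_p$, and Theorem~\ref{th19} give $\mE\big(\int_s^t(\beta(r)+|\alpha(r)|^2)\,dr\mid\cF_s\big)\le C(t-s)^\delta\|\nabla\widetilde\sigma\|^2_{\mL^q_p}$, which is the integrability input required by Lemma~\ref{le6.2}; and, using $\frac{d}{p}+\frac{2}{q}<1$, one fixes $\gamma>1$ with $\gamma<(\frac{d}{p}+\frac{2}{q})^{-1}$ and invokes Theorem~\ref{th19} with exponents $(p/2\gamma,q/2\gamma)$ applied to $|b-b'|^{2\gamma}$ to obtain $\mE\big(\int_{T_1}^{T\wedge\tau_D}|h(r,Y_r')|^2\,dr\big)^{\gamma}\le C\,\mE\int_{T_1}^{T\wedge\tau_D}|(b-b')(r,X^{b',\sigma}_r)|^{2\gamma}\,dr\le C\|b-b'\|^{2\gamma}_{\mL^q_p}$. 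Feeding these into Lemma~\ref{le6.2} (with $\gamma_2=\gamma/p_0$, $\gamma_3=2\gamma_2/(\gamma_2+1)$ and no martingale source) yields, for every $p_0>0$, $\mE\big(\sup_{[T_1,T\wedge\tau_D]}|Z_t|^{2p_0}\big)\le C\,\mE|Z_{T_1}|^{2p_0}+C\|b-b'\|^{2p_0}_{\mL^q_p}$; passing back through $\Phi^{-1}$ (Lipschitz) and lowering the exponent give the same bound over $[T_1,T\wedge\tau_D]$ for $|X^{b,\sigma}_t-X^{b',\sigma}_t|^{p_0}$ in terms of its value at $T_1$. Since the lower bound on $T-T_1$ depends on $b$ only through $\|b\|_{\mL^q_p}$, one repeats the construction on $(T_2,T_1)\times D$, $(T_3,T_2)\times D,\dots$, partitioning $[0,T]$ into finitely many such intervals, carries the initial value along in the standard way, and chains the estimates starting from $X^{b,\sigma}_0-X^{b',\sigma}_0=0$ to reach the asserted bound.

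The main obstacle is the first step: setting up Zvonkin's change of variables on a bounded domain and only up to the exit time $\tau_D$, where the estimate \eqref{016} makes $\nabla u$ small only near the terminal time --- which forces the finite time-partition and the chaining --- and where one must also check that the pulled-back perturbation $h$ and the derivative $\nabla\widetilde\sigma$, evaluated along the trajectory, remain within the scope of the localized Krylov estimate; this last point is exactly where the borderline condition is strengthened from $\frac{d}{p}+\frac{2}{q}<2$ to $\frac{d}{p}+\frac{2}{q}<1$, just as in Lemma~\ref{le20}.
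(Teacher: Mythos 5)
Your proposal is correct in outline but follows a genuinely different route from the paper at the decisive step. The paper transforms \emph{each} equation with its own Zvonkin map: it solves the PDE of Theorem \ref{th17} twice (once with drift $b$, once with $b'$), sets $\Phi^b=x+{\bf u}^b$, $\Phi^{b'}=x+{\bf u}^{b'}$, shows via the equation for $w={\bf u}^b-{\bf u}^{b'}$ the PDE-stability bound \eqref{sec4-eq02}, deduces $\|\Theta^b-\Theta^{b'}\|_{\mL^q_p}\leq C\|b-b'\|_{\mL^q_p}$ for the transformed (drift-free) diffusion coefficients, and then invokes Lemma \ref{le20} as a black box. You instead use a \emph{single} transformation $\Phi=\Phi^b$ applied to both processes, so that $Y=\Phi(t,X^{b,\sigma})$ is drift-free while $Y'=\Phi(t,X^{b',\sigma})$ keeps a drift $h\sim(b'-b)(I+\nabla u)$ of size $|b-b'|$ along $X^{b',\sigma}$, and you then rerun the Lemma \ref{le20}-type argument directly (It\^o on $|Z|^2$, the localized Krylov estimate of Theorem \ref{th19} with exponents $(p/2\gamma,q/2\gamma)$, Lemma \ref{le6.2}). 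What each buys: the paper's route needs the extra PDE comparison of ${\bf u}^b$ and ${\bf u}^{b'}$ but reuses the probabilistic stability lemma verbatim; yours solves only one PDE and avoids \eqref{sec4-eq02} entirely, at the cost of redoing the stochastic Gronwall step with a drift perturbation. Your interval-by-interval chaining, carrying the value at $T_1$, is also a more explicit treatment of the fact that the Zvonkin map is only defined piecewise in time (the paper glues at the level of $\|\Theta^b-\Theta^{b'}\|_{\mL^q_p}$ and applies Lemma \ref{le20} once on $[0,T]$).

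Two points need the same care the paper itself exercises, though neither is fatal. First, Lemma \ref{le6.2} is stated for an exact identity with zero initial value; your Young-inequality step ($\beta\mapsto 1+\beta$) leaves an inequality, and your chaining introduces the $\cF_{T_1}$-measurable term $|Z_{T_1}|^2$. The clean fix is to keep the exact It\^o identity with $\zeta(r)=-2Z_r\cdot h(r,Y_r')$, bound $\int\zeta^+\leq 2\sup|Z|\int|h|\,dr$, and absorb via the same H\"older/absorption trick used in Lemma \ref{le20} (choosing the exponent $\gamma_3=2\gamma_2/(\gamma_2+1)$ there), together with a routine extension of Lemma \ref{le6.2} allowing an initial value; higher moments of $\int|b-b'|(r,X^{b'}_r)\,dr$ needed for this come from iterating the conditional Krylov bound (as in Lemma \ref{le6.1}). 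Second, your bound on $\beta,\alpha$ uses $\cM|\nabla\tilde\sigma|$ evaluated at $Y_r,Y_r'$, which live in $\Phi(t,D)$; either pull these back through $\Phi$ by the change of variables (bounded Jacobian, as the paper does when estimating $I_1$), or use Lemma \ref{le6.02} to identify the image with $D$, before applying Theorem \ref{th19}. With these adjustments your argument goes through and delivers the same estimate.
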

\begin{proof} Since $b(t,x), b'(t,x)\in\mL^q_p([0,T]\times D)$,   Theorem \ref{th17} implies that for any $f,f'\in\mL^q_p([0,T]\times D)$ and $g, g'\in\mW^{2,q}_p([0,T]\times \partial D)$,  there exist unique solutions $u^b, u^{b'}\in\mW^{2,q}_p([0,T]\times D)$ to the equations
\ce
\partial_tu^{b}+L^{\sigma}u^{b}+b\cdot\nabla u^{b}+f=0, ~~~u^{b}(T,x)=0,
\de
and
\ce
\partial_tu^{b'}+L^{\sigma}u^{b'}+b'\cdot\nabla u^{b'}+f'=0, ~~~u^{b'}(T,x)=0,
\de
such that $u^{b}-g\in{\mathring\mW}^{2,q}_p([0,T]\times D)$, $u^{b'}-g'\in{\mathring\mW}^{2,q}_p([0,T]\times D)$, respectively.

In the above equations we take  $f=b^{\ell}, f'=b'^{\ell}$ for $\ell=1,2,\ldots,d$. The corresponding solutions are denoted by  ${\bf u}^b(t,x):=(u^{b,1}(t,x),\ldots,u^{b,d}(t,x))$, ${\bf u}^{b'}(t,x):=(u^{b',1}(t,x),\ldots,u^{b',d}(t,x))$.
Set
\ce
\Phi^b(t,x):=x+{\bf u}^b(t,x),~~~~\Phi^{b'}(t,x)
:=x+{\bf u}^{b'}(t,x), ~~~~~in~~~~~[0,T]\times D.
\de
Let $\delta:=\frac{1}{2}-\frac{d}{2p}-\frac{1}{q}>0$. By 
(\ref{016}), there is a $C=C(\kappa,\alpha,p,q,d)>0$ such that for all $t\in[s_0,t_0]\subseteq[0,T]$,
\ce
 ||\nabla{\bf u}^b(t,\cdot)||_{\cC^\delta(D)}
 &\leq&  C(t_0-s_0)^{\delta/3}\exp\{C_1(T-t)^{q\delta/3}\|b\|^q_{\mL^q_p([0,T]\times D)}\} \nonumber\\
&&\qquad   \times \left(\|f\|^q_{\mL^q_p([0,T]\times D)}+\|g\|^q_{\mL^q_p([0,T]\times D)}\right).
\de
For given positive constant $M$, let us choose $\varepsilon=\varepsilon(\delta,p,q,C,M)>0$ small enough so that for all $t_0-s_0\leq\varepsilon$ and $||b||_{\mL^q_p([s_0,t_0]\times D)}\leq M$,
\ce
\sup_{s_0\leq t\leq t_0}||\nabla{\bf u}^b(t,\cdot)||_{\cC^\delta(D)}\leq\frac{1}{2}.
\de
In particular, we have
\ce
|{\bf u}^b(t,x)-{\bf u}^b(t,y)|\leq\frac{|x-y|}{2},~~~~\quad s_0\leq t\leq t_0.
\de
Thus by definition of $\Phi^b(t,x)=x+{\bf u}^b(t,x)$
we have
\begin{equation}\label{e.4.1}
\frac{1}{2}|x-y|\leq|\Phi^b(t,x)-\Phi^b(t,y)|\leq\frac{3}{2}|x-y|
\end{equation}
for all $t_0-s_0\leq\varepsilon$, any $x,y\in D$ and $||b||_{\mL^q_p([s_0,t_0]\times D)}\leq M$.
In the same way, for all $t_0-s_0\leq\varepsilon$, any $x,y\in D$ and $||b'||_{\mL^q_p([s_0,t_0]\times D)}\leq M$, we also have
\ce
\frac{1}{2}|x-y|\leq|\Phi^{b'}(t,x)-\Phi^{b'}(t,y)|\leq\frac{3}{2}|x-y|.
\de
Next,  we shall  verify the following two estimates:
\begin{equation}\label{sec4-eq01}
\nabla(\nabla\Phi^b\cdot\sigma)(t,x)\in\mL^q_p([s_0,t_0]\times D)
\end{equation}
and
\begin{equation}\label{sec4-eq02}
||\Phi^{b'}-\Phi^b||_{\mL^\infty_\infty([s_0,t_0]\times D)}+||\nabla\Phi^b-\nabla\Phi^{b'}||_{\mL^q_p([s_0,t_0]\times D)}\leq C||b'-b||_{\mL^q_p([s_0,t_0]\times D)}.
\end{equation}
Indeed,
\ce
&&||\nabla(\nabla\Phi^b\cdot\sigma)||_{\mL^q_p([s_0,t_0]\times D)}\\
&&\qquad \quad \leq \kappa ||\nabla^2\Phi^b||_{\mL^q_p([s_0,t_0]\times D)}+||\nabla\Phi^b||_{\mL^\infty_\infty([s_0,t_0]\times D)}\cdot||\nabla\sigma||_{\mL^q_p([s_0,t_0]\times D)}\\
&&\qquad \quad\leq \kappa||\nabla^2{\bf u}^b||_{\mL^q_p([s_0,t_0]\times D)}+||\nabla\Phi^b||_{\mL^\infty_\infty([s_0,t_0]\times D)}\cdot||\nabla\sigma||_{\mL^q_p([s_0,t_0]\times D)}\\
&&\qquad \quad< \infty.
\de
Thus \eqref{sec4-eq01} holds true.
On the other hand, if we  denote  $w:=w^{b,b'}:={\bf u}^b-{\bf u}^{b'}$, then $w$ satisfies the following equation
\begin{equation}\label{sec4-2}
\left\{
                  \begin{array}{lll}
                  \partial_tw+L^\sigma w+b\cdot\nabla w=f-f'+(b'-b)\cdot\nabla{\bf u}^{b'} ~~~in ~~                               [0,T]\times D, \\
                  w(T,x)=0, x\in D,\\
                  w(t,x)=0,~~~t\in(0,T),~~~x\in \partial D.
                  \end{array}
                    \right.
\end{equation}
As above, using  the   definition of $\cC^\delta$
(\eqref{e.def_holder}),   and (\ref{sec2-eq1}),   (\ref{016}), choosing  suitable $\delta,\beta$  such that  $\beta-(\delta+1)>\frac{d}{p}$,  we have
\begin{eqnarray}\label{4.15}
||\nabla{\bf u}^{b'}||_{\mL^\infty_\infty([s_0,t_0]\times D)}&=&\sup_{t\in[s_0,t_0]}||\nabla{\bf u}^{b'}(t)||_{L_{\infty}(D)}  \leq\sup_{t\in[s_0,t_0]}||{\bf u}^{b'}(t)||_{\cC^{\delta+1}(D)}  \nonumber\\
&\leq&\sup_{t\in[s_0,t_0]}||{\bf u}^{b'}(t)||_{\mH^\beta_p(D)} \nonumber\\
&\leq&C(t_0-s_0)^{\delta}\Big(||f'||_{\mL^q_p([s_0,t_0]\times D)}+||g'||_{\mL^q_p([s_0,t_0]\times D)}\Big) \nonumber\\
&<&\infty.
\end{eqnarray}
Therefore, by inequalities (\ref{a.16}) in Theorem  \ref{th17} and (\ref{4.15}), we get
\begin{eqnarray}\label{4.015}
||\nabla\Phi^b-\nabla\Phi^{b'}||_{\mL^q_p([s_0,t_0]\times D)}
&=&||\nabla w||_{\mL^q_p([s_0,t_0]\times D)}\leq||w||_{\mW^{2,q}_p([s_0,t_0]\times D)} \nonumber \\
&\leq &   C||f-f'||_{\mL^q_p([s_0,t_0]\times D) } \nonumber\\
&+&  C||\nabla{\bf u}^{b'}||_{\mL^\infty_\infty([s_0,t_0]\times D)}
\times||b-b'||_{\mL^q_p([s_0,t_0]\times D)} \nonumber\\
&\leq & C||b-b'||_{\mL^q_p([s_0,t_0]\times D)}\,.
\end{eqnarray}
On the other hand, from  the  inequalities (\ref{sec2-eq1}), (\ref{016}) in Theorem  \ref{th17} and (\ref{4.15})    it follows
\ce
||\Phi^{b'}-\Phi^b||_{\mL^\infty_\infty([s_0,t_0]\times D)}&=&||w||_{\mL^\infty_\infty([s_0,t_0]\times D)}=\sup_{t\in[s_0,t_0]}||w(t)||_{L_{\infty}(D)}\\
&\leq&\sup_{t\in[s_0,t_0]}||w(t)||_{  \cC^\delta(D)}
\leq C\sup_{t\in[s_0,t_0]}||w(t)||_{\mH^\beta_p(D)}\\
&\leq &C||f-f'+(b'-b)\cdot\nabla{\bf u}^{b'}||_{\mL^q_p([s_0,t_0]\times D)}\\
&\leq &C||b-b'||_{\mL^q_p([s_0,t_0]\times D)}\times(1+||\nabla{\bf u}^{b'}||_{\mL^\infty_\infty([s_0,t_0]\times D)})\\
&\leq&C||b-b'||_{\mL^q_p([s_0,t_0]\times D)}.
\de
Combining this estimate with (\ref{4.015}) yields  (\ref{sec4-eq02}).

By the generalized It\^o  formula (e.g. \cite[Lemma 4.3]{Zhang2}), $X^{b,\sigma}_t(x)$ solves SDE (\ref{sec4-equ01}) on $[0,T\wedge\tau_D]$ with initial value $x\in D$ if and only if $Y^b_t(y):=\Phi^b(t,X^{b,\sigma}_t(x))$ solves the following SDE on $[0,T\wedge\tau_D]$ with initial value in $D$, where $D$ is also the image  of the function $\Phi^b(t,x)$ by Lemma \ref{le6.02},
\begin{equation}\label{sec4-eq03}
dY^b_t=(\nabla\Phi^b\cdot\sigma)\circ(t,\Phi^{b,-1}(t,Y^b_t))dB_t:=\Theta^b(t,Y_t)dB_t\,,
\end{equation}
where
\begin{equation}\label{e.1}
\Theta^b(t,y):= (\nabla\Phi^b\cdot\sigma)\circ(t,  \Phi^{b,-1}(t,y) )
\end{equation}
and   $\Phi^{b,-1}(t,y)$ is the inverse of $\Phi^{b}(t,\cdot ):D\to D$ with respect to spatial  variable.
Let $\Theta^{b'}$ be defined as above (\ref{e.1}) with $b$ replaced by $b'$, and $\Phi^{b',-1}(t,y)$ is the inverse of $\Phi^{b'}(t,y)$ with respect to spatial  variable.   To study the stability of our original equation we now reduce it to the stability of the equation of \eqref{sec4-eq03} and to study the later equation, we need to know how $\Theta$ depends on $b$.  First, we have
\ce
\Theta^b(t,y)-\Theta^{b'}(t,y)&=&(\nabla\Phi^b\cdot\sigma)\circ(t,\Phi^{b,-1}(t,y))-(\nabla\Phi^b\cdot\sigma)\circ(t,\Phi^{b',-1}(t,y))\\
&\;&+((\nabla\Phi^b-\nabla\Phi^{b'})\cdot\sigma)\circ(t,\Phi^{b',-1}(t,y)):=I_1(t,y)+I_2(t,y).
\de
For the above first term $I_1(t,y)$, by   \cite[the inequality (2.2)]{Zhang} we have
\ce
|I_1(t,y)|&\leq &C|\Phi^{b,-1}(t,y)-\Phi^{b',-1}(t,y)|\\
&&\qquad \times \left(\cM_D|\nabla(\nabla\Phi^b\cdot\sigma)|(t,\Phi^{b,-1}(t,y))+\cM_D|\nabla(\nabla\Phi^b\cdot\sigma)|(t,\Phi^{b',-1}(t,y))\right)\,,
\de
where $\cM_D$ is the Hardy-Littlewood maximal operator which is bounded operator in any $\mL_p^q([0,T]\times D)$ space. Noticing that
\ce
\sup_{y\in D}|\Phi^{b,-1}(t,y)-\Phi^{b',-1}(t,y)|&=&\sup_{x\in D}|x-\Phi^{b',-1}\circ(t,\Phi^b(t,x))|\\
&=&\sup_{x\in D}|\Phi^{b',-1}\circ(t,\Phi^{b'}(t,x))-\Phi^{b',-1}\circ(t,\Phi^b(t,x))|\\
&\leq&  ||\nabla\Phi^{b',-1} (t,\cdot)||_{L_{\infty}(D)}\times||\Phi^{b'}(t,\cdot)-\Phi^b(t,\cdot)||_{L_{\infty}(D)}
\de
and by the change of variables,  \cite[(2.3)]{Zhang}, (\ref{sec4-eq01}) and (\ref{sec4-eq02})  we have
\begin{eqnarray}
||I_1||_{\mL^q_p([s_0,t_0]\times D)}&\leq&C||\cM_D|\nabla(\nabla\Phi^b\cdot\sigma)|(\cdot,\Phi^{b,-1})+\cM_D|\nabla(\nabla\Phi^b\cdot\sigma)|(\cdot,\Phi^{b',-1})||_{\mL^q_p([s_0,t_0]\times D)}\nonumber \\
&&\times||\Phi^{b,-1}-\Phi^{b',-1}||_{\mL^\infty_\infty([s_0,t_0]\times D)}\nonumber \\
&\leq&C||\nabla\Phi^{b',-1}||_{\mL^\infty_\infty([s_0,t_0]\times D)}\times||\Phi^{b'}-\Phi^b||_{\mL^\infty_\infty([s_0,t_0]\times D)}\nonumber \\
&&\times||\cM_D|\nabla(\nabla\Phi^b\cdot\sigma)|||_{\mL^q_p([s_0,t_0]\times D)}\nonumber \\
&\leq&C||\nabla(\nabla\Phi^b\cdot\sigma)||_{\mL^q_p([s_0,t_0]\times D)}\times||\Phi^{b'}-\Phi^b||_{\mL^\infty_\infty([s_0,t_0]\times D)}\nonumber \\
&\leq&C||b'-b||_{\mL^q_p([s_0,t_0]\times D)}.\label{e.est_i1}
\end{eqnarray}
For the second
term $I_2(t,y)$, by the change of variables, boundedness  of $\sigma$ and (\ref{sec4-eq02}), we have
\begin{eqnarray}
||I_2||_{\mL^q_p([s_0,t_0]\times D)}
&=&||(\nabla\Phi^b-\nabla\Phi^{b'})\cdot\sigma||_{\mL^q_p([s_0,t_0]\times D)}\nonumber \\
&\leq& \kappa||\nabla\Phi^b-\nabla\Phi^{b'}||_{\mL^q_p([s_0,t_0]\times D)} \nonumber \\
&\leq& C||b'-b||_{\mL^q_p([s_0,t_0]\times D)}.\label{e.est_i2}
\end{eqnarray}
Combining \eqref{e.est_i1} and \eqref{e.est_i2} yields
\begin{equation}\label{sec4-eq04}
||\Theta^b-\Theta^{b'}||_{\mL^q_p([s_0,t_0]\times D)}\leq C||b-b'||_{\mL^q_p([s_0,t_0]\times D)}.
\end{equation}
Dividing  $[0,T]$ into subintervals,  applying the above estimates on each interval and piecing  them together,
we then obtain
\begin{equation}\label{sec4-equ04}
||\Theta^b-\Theta^{b'}||_{\mL^q_p([0,T]\times D)}\leq C||b-b'||_{\mL^q_p([0,T]\times D)}.
\end{equation}
Finally, combining the above estimate \eqref{sec4-equ04}  with  Lemma \ref{le20} gives
\ce
\sup_{x\in D}\mE\left(\sup_{t\in[0,T\wedge\tau_D]}|X_t^{b,\sigma}(x)-X_t^{b'\sigma}(x)|^{p_0}\right)
&\leq&\sup_{y\in D}\mE\left(\sup_{t\in[0,T\wedge\tau_D]}|Y_t^b(y)-Y_t^{b'}(y)|^{p_0}\right)\\
&\leq&C||\Theta^b-\Theta^{b'}||^{p_0}_{\mL^q_p([0,T]\times D)}\\
&\leq& C||b-b'||^{p_0}_{\mL^q_p([0,T]\times D)}.
\de
This completes the proof of the theorem.
\end{proof}

Finally, we concluded this subsection with the following theorem:
\begin{theorem}\label{th22} Assume that on the domain $D$, the coefficients   $b, b'$ satisfy $({\bf H}^b)$ and that coefficients  $\sigma, \sigma'$ satisfy $({\bf H}^\sigma)$. Let $X_t^{b,\sigma}$ and $X_t^{b',\sigma'}$ be the tied weak solutions to (\ref{sec4-equ01}) associated with coefficients pairs $(b, \sigma)$ and $(b', \sigma')$, respectively. Then for all $p_0>0$, there is a constant $C=C_{p_0, p, q, \kappa, \alpha}$ depending on $p_0, p, q,  \kappa, \alpha $, 
and the constants appeared in
 assumptions $({\bf H}^b)$ and $({\bf H}^\sigma)$, such that
\begin{equation}\label{sect4-equ07}
\sup_{x\in D}\mE\left(\sup_{t\in[0,T\wedge\tau_D]}|X_t^{b,\sigma}(x)-X_t^{b',\sigma'}(x)|^{p_0}\right)\leq C\left(||b-b'||^{p_0}_{\mL^q_p([0,T]\times D)}+||\sigma-\sigma'||^{p_0}_{\mL^q_p([0,T]\times D)}\right).
\end{equation}
\end{theorem}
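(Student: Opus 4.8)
The plan is to compare $X^{b,\sigma}$ and $X^{b',\sigma'}$ by pushing both processes forward through a \emph{single} Zvonkin diffeomorphism, and then to run the energy estimate of Lemma~\ref{le20} on the transformed pair, treating the mismatch of the coefficients as an exogenous drift that is controlled along the trajectory by the localized Krylov estimate of Theorem~\ref{th19}. First I would fix the $C^1$-diffeomorphism $\Phi=\Phi^{b,\sigma}(t,\cdot)$ attached to the pair $(b,\sigma)$, exactly as in the proof of Theorem~\ref{th21}: writing $\Phi=x+{\bf u}^{b,\sigma}$, where the $\ell$-th component of ${\bf u}^{b,\sigma}$ solves $\partial_tu+L^\sigma u+b\cdot\nabla u+b^\ell=0$ with zero terminal and lateral data, estimate (\ref{016}) lets us split $[0,T]$ into finitely many windows on which $\sup_t\|\nabla{\bf u}^{b,\sigma}(t)\|_{\cC^{\delta/2}(D)}\le\frac12$, so that $\Phi(t,\cdot)$ and its spatial inverse are uniformly Lipschitz on $D$ (whose image is again $D$ by Lemma~\ref{le6.02}); it suffices to prove (\ref{sect4-equ07}) on one such window and patch. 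On a window the generalized It\^o formula (\cite[Lemma 4.3]{Zhang2}, applicable by Theorem~\ref{th19}) gives, for $Y_t:=\Phi(t,X^{b,\sigma}_t)$ and $Y_t':=\Phi(t,X^{b',\sigma'}_t)$,
\[
dY_t=\Theta(t,Y_t)\,dB_t,\qquad dY_t'=g(t,X^{b',\sigma'}_t)\,dt+(\nabla\Phi\cdot\sigma')(t,X^{b',\sigma'}_t)\,dB_t ,
\]
where $\Theta=(\nabla\Phi\cdot\sigma)\circ(t,\Phi^{-1})$ as in (\ref{e.1}) and, since $\partial_t\Phi+b\cdot\nabla\Phi+\tfrac12 a^{ij}\partial_{ij}\Phi=0$ componentwise by construction, the residual drift is (componentwise)
\[
g=(b'-b)+(b'-b)\cdot\nabla{\bf u}^{b,\sigma}+\tfrac12(a'-a):\nabla^2{\bf u}^{b,\sigma},\qquad a'=\sigma'(\sigma')^t .
\]
Because $\Phi^{-1}(t,\cdot)$ is Lipschitz, $|X^{b,\sigma}_t-X^{b',\sigma'}_t|\le C|Z_t|$ with $Z:=Y-Y'$, and it is enough to bound $\mE\sup_{[0,T\wedge\tau_D]}|Z_t|^{p_0}$.

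Next I would apply It\^o's formula to $|Z_t|^2$ and cast the outcome in the form used in the proof of Lemma~\ref{le20}. The diffusion difference $\Theta(t,Y_t)-(\nabla\Phi\cdot\sigma')(t,X^{b',\sigma'}_t)$ equals $\big[(\nabla\Phi\cdot\sigma)(t,X^{b,\sigma}_t)-(\nabla\Phi\cdot\sigma)(t,X^{b',\sigma'}_t)\big]+(\nabla\Phi\cdot(\sigma-\sigma'))(t,X^{b',\sigma'}_t)$; by \cite[(2.2) and (2.3)]{Zhang} and $|X^{b,\sigma}_t-X^{b',\sigma'}_t|\le C|Z_t|$, the bracket is $\le C|Z_t|\big(\cM_D|\nabla(\nabla\Phi\cdot\sigma)|(X^{b,\sigma}_t)+\cM_D|\nabla(\nabla\Phi\cdot\sigma)|(X^{b',\sigma'}_t)\big)$, and since $\nabla(\nabla\Phi\cdot\sigma)=\nabla^2{\bf u}^{b,\sigma}\cdot\sigma+\nabla\Phi\cdot\nabla\sigma\in\mL^q_p((0,T)\times D)$ (using $\nabla\sigma\in\mL^q_p$ from $({\bf H}^\sigma)$) it produces exactly the "$\beta,\alpha$"-type terms of (\ref{0}), whose Khasminskii bound follows from Theorem~\ref{th19} at the exponents $(p/2,q/2)$ (admissible since $\tfrac{d}{p}+\tfrac{2}{q}<1$). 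The second summand, of size $\lesssim|\sigma-\sigma'|(X^{b',\sigma'})$, supplies the "$\xi,\eta$"-type terms exactly as in Lemma~\ref{le20}. The new feature is the drift contribution $-2\langle Z_t,g(t,X^{b',\sigma'}_t)\rangle$: bounding $|Z_t|^2\le 2\big(\sup_{s\le T\wedge\tau_D}|Z_s|\big)\int_0^{T\wedge\tau_D}|g(s,X^{b',\sigma'}_s)|\,ds+(\text{drift-free part})$ and solving the resulting quadratic inequality, it suffices to control $\mE\big(\int_0^{T\wedge\tau_D}|g(s,X^{b',\sigma'}_s)|\,ds\big)^{2p_0}$. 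Here $|g|\le C|b-b'|+C|\sigma-\sigma'|\,|\nabla^2{\bf u}^{b,\sigma}|$; the first term is in $\mL^q_p$, and Cauchy--Schwarz gives $\big\||\sigma-\sigma'|\,|\nabla^2{\bf u}^{b,\sigma}|\big\|_{\mL^{q/2}_{p/2}}\le\|\sigma-\sigma'\|_{\mL^q_p}\|\nabla^2{\bf u}^{b,\sigma}\|_{\mL^q_p}$, the last factor being a constant of the data by Theorem~\ref{th17}. Applying Theorem~\ref{th19} (with exponents $(p,q)$ and $(p/2,q/2)$ respectively, both admissible because $\tfrac{d}{p}+\tfrac{2}{q}<1<2$) together with Khasminskii's lemma upgrades these conditional bounds to $\mE\big(\int_0^{T\wedge\tau_D}|g(s,X^{b',\sigma'}_s)|\,ds\big)^m\le C_m\big(\|b-b'\|_{\mL^q_p}+\|\sigma-\sigma'\|_{\mL^q_p}\big)^m$ for every $m>0$.

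Feeding these ingredients into Lemma~\ref{le6.2} (exactly as in the proof of Lemma~\ref{le20}, with the extra drift absorbed as above) yields, on the window,
\[
\mE\Big(\sup_{t\in[0,T\wedge\tau_D]}|Z_t|^{2p_0}\Big)\le C\Big(\|b-b'\|^{2p_0}_{\mL^q_p((0,T)\times D)}+\|\sigma-\sigma'\|^{2p_0}_{\mL^q_p((0,T)\times D)}\Big);
\]
patching over the finitely many windows (their number depending only on the data) and undoing the bi-Lipschitz change $X^{b,\sigma}_t-X^{b',\sigma'}_t\leftrightarrow Z_t$ gives (\ref{sect4-equ07}) (replace $p_0$ by $p_0/2$ for the stated exponent $p_0$). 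The main obstacle is precisely the residual term $\tfrac12(a'-a):\nabla^2{\bf u}^{b,\sigma}$: its first factor is small only in $\mL^q_p$ (never in $L^\infty$) while $\nabla^2{\bf u}^{b,\sigma}$ lies merely in $\mL^q_p$, so the product cannot be used as an $\mL^q_p$ source of any comparison PDE — this is why one transforms \emph{both} processes by the same map $\Phi^{b,\sigma}$ and estimates $g$ only along the path of $X^{b',\sigma'}$, where Cauchy--Schwarz together with the localized Krylov estimate at the halved exponents $(p/2,q/2)$, available thanks to $\tfrac{d}{p}+\tfrac{2}{q}<1$, closes the gap. (When $d\ge2$ these halved exponents automatically exceed $1$; in the one-dimensional borderline case one lowers them slightly, which is harmless on the bounded set $(0,T)\times D$.)
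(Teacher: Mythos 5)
Your proposal is correct in outline but takes a genuinely different route from the paper. The paper proves Theorem \ref{th22} by the triangle inequality through the intermediate process $X^{b',\sigma}$: the leg $X^{b,\sigma}\to X^{b',\sigma}$ is exactly Theorem \ref{th21}, and for the leg $X^{b',\sigma}\to X^{b',\sigma'}$ it builds \emph{two} Zvonkin maps $\Phi^{\sigma}$, $\Phi^{\sigma'}$ (same drift, different operators $L^{\sigma}$, $L^{\sigma'}$), so that both transformed SDEs are driftless; it then shows $\|\Theta^{\sigma}-\Theta^{\sigma'}\|_{\mL^q_p((0,T)\times D)}\le C\|\sigma-\sigma'\|_{\mL^q_p((0,T)\times D)}$ via the PDE satisfied by $v={\bf u}^{\sigma}-{\bf u}^{\sigma'}$ with source $(L^{\sigma'}-L^{\sigma}){\bf u}^{\sigma'}$ (Theorem \ref{th17} plus the $L^\infty$ bound on $\nabla^2{\bf u}^{\sigma'}$), and quotes Lemma \ref{le20} verbatim. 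You instead push both processes through the single map $\Phi^{b,\sigma}$, accept the residual drift $g=(b'-b)+(b'-b)\cdot\nabla{\bf u}+\frac12(a'-a):\nabla^2{\bf u}$ along $X^{b',\sigma'}$ (your computation of $g$ is correct), control it pathwise by Theorem \ref{th19} at the halved exponents together with iterated conditioning, and absorb it into the Lemma \ref{le6.2} computation. What each buys: the paper never leaves the driftless framework, so nothing beyond Lemma \ref{le20} is needed stochastically, at the price of two transformations and of re-deriving a comparison PDE; your route treats the $b$- and $\sigma$-perturbations in one energy estimate and does not use Theorem \ref{th21} as a black box, at the price of the extra moment bound $\mE\bigl(\int_0^{T\wedge\tau_D}|g(s,X^{b',\sigma'}_s)|\,ds\bigr)^{m}\lesssim\bigl(\|b-b'\|_{\mL^q_p((0,T)\times D)}+\|\sigma-\sigma'\|_{\mL^q_p((0,T)\times D)}\bigr)^{m}$ and a drifted variant of the Lemma \ref{le20} argument.

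Two points need explicit care in a final write-up. First, absorb the drift term inside Lemma \ref{le6.2} through the $\zeta$-slot with the same H\"older splitting the paper uses for the $\eta$-term in Lemma \ref{le20} (exponents $2$ and $2\gamma_2$ inside $L^{\gamma_4}$ with $1/\gamma_4=1/2+1/(2\gamma_2)$), so the factor carrying $Z$ is exactly $(\mE\sup|Z|^{2p_0})^{1/2}$ and can be absorbed (a priori finite since $D$ is bounded); bounding $|Z|$ crudely by $\diam(D)$ would only produce the norm difference to the power $p_0$ rather than $2p_0$ and would lose a square root in the final rate, so your sup-and-absorb formulation is the right one and should be carried out at this level of precision. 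Second, your patching over time windows is done at the level of the stochastic estimate, so at the start of each later window the transformed processes no longer coincide; you need the window estimate with a nonzero $\cF_{t_k}$-measurable initial offset (a routine extension of Lemma \ref{le6.2}, whose representation $\xi(t)=M(t)(\xi(0)+\cdots)$ accommodates it) and then an iteration over the finitely many windows. The paper's own ``divide and piece together'' step has the same wrinkle, but there it concerns only the deterministic $\mL^q_p$ bound on $\Theta^{\sigma}-\Theta^{\sigma'}$, so in your version this should be stated rather than left implicit.
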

\begin{proof}
Since
\ce
&&\mE\left(\sup_{t\in[0,T\wedge\tau_D]}|X_t^{b,\sigma}(x)-X_t^{b',\sigma'}(x)|^{p_0} \right)\\
&\leq&2^{p_0-1}\mE\left(\sup_{t\in[0,T\wedge\tau_D]}|X_t^{b,\sigma}(x)-X_t^{b',\sigma}(x)|^{p_0}\right)\\
& &\qquad +2^{p_0-1}\mE\left(\sup_{t\in[0,T\wedge\tau_D]}|X_t^{b',\sigma}(x)-X_t^{b',\sigma'}(x)| ^{p_0}\right)\\
&=:& J_1(x)+J_2(x).
\de
By Theorem \ref{th21}, we have already that
\ce
\sup_{x\in D}J_1(x) \leq C||b-b'||^{p_0}_{\mL^q_p([0,T]\times D)}.
\de
Next,  we devote our effort to deal with $J_2(x)$.
We shall use the  Zvonkin transformation as in the proof of Theorem  \ref{th21}.
 For any $f,f'\in\mL^q_p([0,T]\times D)$ and $g, g'\in\mW^{2,q}_p([0,T]\times \partial D)$,  Theorem \ref{th17} implies that there exist unique solutions $u^\sigma, u^{\sigma'}\in\mW^{2,q}_p([0,T]\times D)$ to
 the following two equations:
\ce
\partial_tu^{\sigma}+L^{\sigma}u^{\sigma}+b\cdot\nabla u^{\sigma}+f=0, ~~~u^{\sigma}(T,x)=0,
\de
and
\ce
\partial_tu^{\sigma'}+L^{\sigma'}u^{\sigma'}+b\cdot\nabla u^{\sigma'}+f'=0, ~~~u^{\sigma'}(T,x)=0,
\de
such that $u^{\sigma}-g\in{\mathring\mW}^{2,q}_p([0,T]\times D)$, $u^{\sigma'}-g'\in{\mathring\mW}^{2,q}_p([0,T]\times D)$, respectively.

Taking  $f=f'=b^{\ell}$ for $\ell=1,2,\ldots,d$  and letting
\begin{equation*}
{\bf u}^\sigma(t,x):=(u^{\sigma,1}(t,x),\ldots,u^{\sigma,d}(t,x))\,,\quad {\bf u}^{\sigma'}(t,x):=(u^{\sigma',1}(t,x),\ldots,u^{\sigma',d}(t,x))
\end{equation*}
  and
\ce
\Phi^\sigma(t,x):=x+{\bf u}^\sigma(t,x),~~~~\Phi^{\sigma'}(t,x):=x+{\bf u}^{\sigma'}(t,x), ~~~~~in~~~~~[0,T]\times D.
\de
Applying the same procedure  as that in the proof of  Theorem \ref{th21}    we have
for all $t_0-s_0\leq\varepsilon$, any $x,y\in D$ and $||\sigma||_{\mL^q_p([s_0,t_0]\times D)},   ||\sigma'||_{\mL^q_p([s_0,t_0]\times D)}\leq M$,
\ce
\frac{1}{2}|x-y|\leq|\Phi^\sigma(t,x)-\Phi^\sigma(t,y)|\leq\frac{3}{2}|x-y|
\de
and
\ce
\frac{1}{2}|x-y|\leq|\Phi^{\sigma'}(t,x)-\Phi^{\sigma'}(t,y)|\leq\frac{3}{2}|x-y|.
\de
As in the proof of Theorem \ref{th21} we need to  verify
\begin{equation}\label{sec4-eq05}
\nabla(\nabla\Phi^\sigma\cdot\sigma)(t,x)\in\mL^q_p([0,T]\times D)
\end{equation}
and
\begin{equation}\label{sec4-eq06}
||\Phi^{\sigma'}-\Phi^\sigma||_{\mL^\infty_\infty([s_0,t_0]\times D)}+||\nabla\Phi^\sigma-\nabla\Phi^{\sigma'}||_{\mL^q_p([s_0,t_0]\times D)}\leq C||\sigma-\sigma'||_{\mL^q_p([s_0,t_0]\times D)}.
\end{equation}
Indeed,
\ce
||\nabla(\nabla\Phi^\sigma\cdot\sigma)||_{\mL^q_p([s_0,t_0]\times D)}&\leq&\kappa||\nabla^2\Phi^\sigma||_{\mL^q_p([s_0,t_0]\times D)}\\
&\qquad& +||\nabla\Phi^\sigma||_{\mL^\infty_\infty([s_0,t_0]\times D)}\cdot||\nabla\sigma||_{\mL^q_p([s_0,t_0]\times D)}\\
&\leq&\kappa||\nabla^2{\bf u}^\sigma||_{\mL^q_p([s_0,t_0]\times D)}\\
&\qquad& + ||\nabla\Phi^\sigma||_{\mL^\infty_\infty([s_0,t_0]\times D)}\cdot||\nabla\sigma||_{\mL^q_p([s_0,t_0]\times D)}\\
&<&\infty.
\de
On the other hand, if we  denote  $v:=v^{\sigma,\sigma'}:={\bf u}^{\sigma}-{\bf u}^{\sigma'}$, then $v$ satisfies following equation
\begin{equation}\label{e.4.1}
\left\{
                  \begin{array}{lll}
                  \partial_tv+L^\sigma v +b\cdot\nabla v=(L^{\sigma'}-L^\sigma)\cdot{\bf u}^{\sigma'}  ~~~in ~~                               [0,T]\times D, x\in D,\\
                  v(T,x)=0,\\
                  v(t,x)=0,~~~t\in(0,T),~~~x\in \partial D.
                  \end{array}
                    \right.
\end{equation}
As above,  by the   definition of $\cC^\delta$
(\eqref{e.def_holder}),    and (\ref{sec2-eq1}),  (\ref{016}), choosing  suitable $\delta,\beta$  such that  $\beta-(\delta+2)>\frac{d}{\gamma}$, we have
\begin{eqnarray}\label{e.4.2}
||\nabla^2{\bf u}^{\sigma'}||_{\mL^\infty_\infty([s_0,t_0]\times D)}&=&\sup_{t\in[s_0,t_0]}||\nabla^2{\bf u}^{\sigma'}(t)||_{L_{\infty}(D)}\leq\sup_{t\in[s_0,t_0]}||{\bf u}^{\sigma'}(t)||_{\cC^{\delta+2}(D)} \nonumber\\
&\leq&\sup_{t\in(s_0,t_0)}||{\bf u}^{\sigma'}(t)||_{\mH^\beta_{\gamma}(D)} \nonumber\\
&\leq&C(t_0-s_0)^{\delta}\Big(||f'||_{\mL^q_p([s_0,t_0]\times D)}+||g'||_{\mL^q_p([s_0,t_0]\times D)}\Big) \nonumber\\
&<&\infty.
\end{eqnarray}
Therefore, by inequalities (\ref{a.16}) in Theorem  \ref{th17} and (\ref{e.4.2}), we get
\begin{eqnarray}\label{e.4.3}
||\nabla\Phi^{\sigma}-\nabla\Phi^{\sigma'}||_{\mL^q_p([s_0,t_0]\times D)}
&=&||\nabla v||_{\mL^q_p([s_0,t_0]\times D)}\leq||v||_{\mW^{2,q}_p([s_0,t_0]\times D)} \nonumber \\
&\leq & C||(L^{\sigma'}-L^\sigma)\cdot{\bf u}^{\sigma'}||_{\mL^q_p([s_0,t_0]\times D) } \nonumber\\
&\leq & C||(\sigma'-\sigma)\cdot\nabla^2{\bf u}^{\sigma'}||_{\mL^q_p([s_0,t_0]\times D) } \nonumber\\
&\leq & C||\sigma-\sigma'||_{\mL^q_p([s_0,t_0]\times D)}\cdot||\nabla^2{\bf u}^{\sigma'}||_{\mL^\infty_\infty([s_0,t_0]\times D)} \nonumber\\
&\leq & C||\sigma-\sigma'||_{\mL^q_p([s_0,t_0]\times D)}\,.
\end{eqnarray}
On the other hand,  by the  inequalities (\ref{sec2-eq1}), (\ref{016}) in Theorem \ref{th17} and (\ref{e.4.2}), we obtain
\ce
||\Phi^{\sigma'}-\Phi^\sigma||_{\mL^\infty_\infty([s_0,t_0]\times D)}&=&||v||_{\mL^\infty_\infty([s_0,t_0]\times D)}=\sup_{t\in[s_0,t_0]}||v(t)||_{L_{\infty}(D)}\\
&\leq&\sup_{t\in[s_0,t_0]}||v(t)||_{\cC^\delta(D)}
\leq C\sup_{t\in[s_0,t_0]}||v(t)||_{\mH^\beta_p(D)}\\
&\leq &C||(L^{\sigma'}-L^\sigma)\cdot{\bf u}^{\sigma'}||_{\mL^q_p([s_0,t_0]\times D)}\\
&\leq&C||\sigma-\sigma'||_{\mL^q_p([s_0,t_0]\times D)}.
\de
Combining this estimate with (\ref{e.4.3}), we get (\ref{sec4-eq06}).

Then dividing  $[0,T]$ into subintervals and  applying the last estimates  on each interval and piecing  them together, we see  that \eqref{sec4-eq05} is verified.

By generalized It\^o's formula   (e.g. \cite[Lemma 4.3]{Zhang2})
and by Lemma \ref{le6.02},
  $X^{b,\sigma}_t(x)$ satisfies  SDE (\ref{sec4-equ01}) on $[0,T\wedge\tau_D]$ with initial value $x\in D$ if and only if $Y^\sigma_t(y):=\Phi^\sigma(t,X^{b,\sigma}_t(x))$ satisfies  the following SDE on $[0,T\wedge\tau_D]$ with initial value in $D$, where $D$ is also   the image domain of function $\Phi^\sigma(t,y)$: 
    \begin{equation}\label{sec4-eq032}
dY^\sigma_t=(\nabla\Phi^\sigma\cdot\sigma)\circ(t,\Phi^{\sigma,-1}(t,Y^\sigma_t))dB_t:=\Theta^\sigma(t,Y^\sigma_t)dB_t,
\end{equation}
where
\begin{equation}\label{4.25}
\Theta^\sigma(t,y)=(\nabla\Phi^\sigma\cdot\sigma)\circ(t,\Phi^{\sigma,-1}(t,y)).
\end{equation}
Let $\Theta^{\sigma'}$ be defined as in (\ref{4.25}) via  $\sigma$ replaced by $\sigma'$. We have
\ce
\Theta^\sigma(t,y)-\Theta^{\sigma'}(t,y)&=&(\nabla\Phi^\sigma\cdot\sigma)\circ(t,\Phi^{\sigma,-1}(t,y))-(\nabla\Phi^\sigma\cdot\sigma)\circ(t,\Phi^{\sigma',-1}(t,y))\\
&\;&+(\nabla\Phi^\sigma\cdot\sigma-\nabla\Phi^{\sigma'}\cdot\sigma')\circ(t,\Phi^{\sigma',-1}(t,y))\\
&:=&J_{21}(t,y)+J_{22}(t,y).
\de
We are going to bound
$J_{21}(t,y)$ and $J_{22}(t,y)$ separately. First, we bound  $J_{21}(t,y)$.
Since  $\cM_D$ is bounded operator in any integrable functions, we have
\ce
|J_{21}|
&\leq& C|\Phi^{\sigma,-1}(t,y)-\Phi^{\sigma',-1}(t,y)|\\
&&\times\left(\cM_D|\nabla(\nabla\Phi^\sigma\cdot\sigma)|(t,\Phi^{\sigma,-1}(t,y))+\cM_D|\nabla(\nabla\Phi^\sigma\cdot\sigma)|(t,\Phi^{\sigma',-1}(t,y))\right).
\de
Noticing that
\ce
\sup_{y\in D}|\Phi^{\sigma,-1}(t,y)-\Phi^{\sigma',-1}(t,y)|&=&\sup_{x\in D}|x-\Phi^{\sigma',-1}\circ(t,\Phi^\sigma(t,x))|\\
&\leq&||\nabla\Phi^{\sigma',-1}\circ(t,\cdot)||_{L_{\infty}(D)}\times||\Phi^{\sigma'}(t,\cdot)-\Phi^\sigma(t,\cdot)||_{L_{\infty}(D)}
\de
and by the change of variables,  \cite[(2.3)]{Zhang}, (\ref{sec4-eq05}),  and (\ref{sec4-eq06}) we see
\ce
||J_{21}||_{\mL^q_p([s_0,t_0]\times D)}&\leq&C ||\cM_D|\nabla(\nabla\Phi^\sigma\cdot\sigma)|(\cdot,\Phi^{\sigma,-1})+\cM_D|\nabla(\nabla\Phi^\sigma\cdot\sigma)|(\cdot,\Phi^{\sigma',-1})||_{\mL^q_p([s_0,t_0]\times D)}\\
&&\times||\Phi^{\sigma,-1}-\Phi^{\sigma',-1}||_{\mL^\infty_\infty([s_0,t_0]\times D)} \\
&\leq&C||\nabla\Phi^{\sigma',-1}||_{\mL^\infty_\infty([s_0,t_0]\times D)}\times||\Phi^{\sigma'}-\Phi^\sigma||_{\mL^\infty_\infty([s_0,t_0]\times D)}\\
&&\times||\cM_D|\nabla(\nabla\Phi^\sigma\cdot\sigma)|||_{\mL^q_p([s_0,t_0]\times D)}\\
&\leq&C||\nabla(\nabla\Phi^\sigma\cdot\sigma)||_{\mL^q_p([s_0,t_0]\times D)}\times||\Phi^{\sigma'}-\Phi^\sigma||_{\mL^\infty_\infty([s_0,t_0]\times D)}\\
&\leq&C||\sigma'-\sigma||_{\mL^q_p([s_0,t_0]\times D)}.
\de
For $J_{22}(t,y)$, by the change of variables, boundedness  of $\sigma$ and (\ref{sec4-eq06}), we have
\ce
||J_{22}||_{\mL^q_p([s_0,t_0]\times D)}&=&||\nabla\Phi^\sigma\cdot\sigma-\nabla\Phi^{\sigma'}\cdot\sigma'||_{\mL^q_p([s_0,t_0]\times D)}\\
&\leq&||(\nabla\Phi^\sigma-\nabla\Phi^{\sigma'})\cdot\sigma||_{\mL^q_p([s_0,t_0]\times D)}+||\nabla\Phi^{\sigma'}(\sigma-\sigma')||_{\mL^q_p([s_0,t_0]\times D)}\\
&\leq&\kappa||\nabla\Phi^\sigma-\nabla\Phi^{\sigma'}||_{\mL^q_p([s_0,t_0]\times D)}\\
& &\qquad  +||\nabla\Phi^{\sigma'}||_{\mL^\infty_\infty([s_0,t_0]\times D)}\times||\sigma-\sigma'||_{\mL^q_p([s_0,t_0]\times D)}\\
&\leq&C||\sigma-\sigma'||_{\mL^q_p([s_0,t_0]\times D)}.
\de
Thus, we arrive at
\begin{equation}\label{sec4-eq040}
||\Theta^\sigma-\Theta^{\sigma'}||_{\mL^q_p([s_0,t_0]\times D)}\leq C||\sigma-\sigma'||_{\mL^q_p([s_0,t_0]\times D)}.
\end{equation}
Dividing  $[0,T]$ into subintervals and  applying the estimate (\ref{sec4-eq040}) on each interval and piecing them together, we obtain
\begin{equation}\label{sec4-eq041}
||\Theta^\sigma-\Theta^{\sigma'}||_{\mL^q_p([0,T]\times D)}\leq C||\sigma-\sigma'||_{\mL^q_p([0,T]\times D)}.
\end{equation}
Finally,  combining the above inequality  (\ref{sec4-eq041}) and Lemma \ref{le20}, we get
\ce
\sup_{x\in D}\mE\left(\sup_{t\in[0,T\wedge\tau_D]}|X_t^{b',\sigma}(x)-X_t^{b'\sigma'}(x)|^{p_0}\right)
&\leq&\sup_{y\in D}\mE\left(\sup_{t\in[0,T\wedge\tau_D]}|Y_t^\sigma(y)-Y_t^{\sigma'}(y)|^{p_0}\right)\\
&\leq&C||\Theta^\sigma-\Theta^{\sigma'}||^{p_0}_{\mL^q_p([0,T]\times D)}\\
&\leq& C||\sigma-\sigma'||^{p_0}_{\mL^q_p([0,T]\times D)}.
\de
This combined with Theorem \ref{th21}  completes the  proof of (\ref{sect4-equ07}).
\end{proof}

\section{\bf Existence and uniqueness of strong solution}\label{s.5}

In this  section, we shall study the strong solution to
 a  stochastic differential equation whose coefficients
 satisfy the conditions $({\bf H}^b$),  $ ({\bf H}^{\sigma})$
 and $({\bf H}^L)$.

Before doing this we need to study the solution of  a stochastic differential equation on
bounded domain $D$, which is of very interesting on its own.  Assume that   $b:[0,\infty)\times D\rightarrow\mR^d, \sigma:[0,\infty)\times D \rightarrow\mR^d\otimes\mR^d$ are measurable  and satisfy the conditions $({\bf H}^b$),  $ ({\bf H}^{\sigma})$
on the domain $D$. We first study the strong solution of  the following stochastic differential equation on $D$:
\begin{equation}\label{sec5-equ1}
  X_t=x+\int_0^tb(s,X_s)ds+\int_0^t\sigma(s,X_s)dB_s, ~~~t\in[0,T],~~~x\in D\,,
\end{equation}

\begin{theorem}\label{t00} Let $D$ be any  nonempty bounded (connected)
domain of $\mR^d$ such that  $\partial D\in C^2$.   Assume that $b,\sigma  $  satisfy  $({\bf H}^b)$ and$({\bf H}^{\sigma})$ on the domain $D$.
Then, the equation \eqref{sec5-equ1} has a unique solution $X_t$ up to positive stopping time
$\tau_D$, which is the first exit time of the process $X_t$ from the domain $D$. Namely, there are a stopping time $\tau_D$ and
unique stochastic process $(X_t\,, 0\le t\le \tau_D)$ such that
\begin{equation}\label{sec5-equ2}
  X_{t\wedge \tau_D} =x+\int_0^{t\wedge \tau_D} b(s,X_s)ds+\int_0^{t\wedge \tau_D}\sigma(s,X_s)dB_s, ~~~
  \forall \ t\in[0,\infty)\,.
\end{equation}
\end{theorem}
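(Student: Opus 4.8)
The plan is to carry out a localized Zvonkin transformation. Using Theorem~\ref{th17} I would build, on a short time interval, a spatial $C^1$-diffeomorphism $\Phi^b$ of $D$ that removes the drift, turning \eqref{sec5-equ1} into an equation with zero drift and a uniformly non-degenerate diffusion coefficient $\Theta^b$ with $\nabla\Theta^b\in\mL^q_p$; this transformed equation is then solved uniquely up to the exit time of $D$ by extending $\Theta^b$ to $\RR^d$ and invoking the classical strong well-posedness theory, after which one pulls back and concatenates over the finitely many short intervals exhausting $[0,T]$. Observe first that $\tau_D>0$ almost surely, because $x$ lies in the open set $D$ and the paths are continuous, so the statement is meaningful.

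Fix $\delta:=\frac12-\frac{d}{2p}-\frac1q>0$ and $M:=\|b\|_{\mL^q_p((0,T)\times D)}<\infty$. Exactly as in the proof of Theorem~\ref{th21}, on a subinterval $(s_0,t_0)\subseteq(0,T)$ and for $\ell=1,\dots,d$, I would apply Theorem~\ref{th17} with $f=b^\ell$ and zero terminal and boundary data to get $u^{b,\ell}\in\mW^{2,q}_p((s_0,t_0)\times D)$ with $\partial_t u^{b,\ell}+L^\sigma u^{b,\ell}+b\cdot\nabla u^{b,\ell}+b^\ell=0$, and set ${\bf u}^b=(u^{b,1},\dots,u^{b,d})$, $\Phi^b(t,x)=x+{\bf u}^b(t,x)$. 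The a priori bound \eqref{016} of Theorem~\ref{th17} provides $\varepsilon=\varepsilon(\delta,p,q,\kappa,\alpha,\diam(D),M)>0$ such that $t_0-s_0\le\varepsilon$ forces $\sup_{s_0\le t\le t_0}\|\nabla{\bf u}^b(t,\cdot)\|_{L^\infty(D)}\le\frac12$, so that $\Phi^b(t,\cdot):D\to D$ is a bi-Lipschitz $C^1$-diffeomorphism onto $D$ (Lemma~\ref{le6.02}) which, since ${\bf u}^b$ vanishes on $\partial D$, is the identity on $\partial D$. Then on $[s_0,t_0\wedge\tau_D]$ the generalized It\^o formula \cite[Lemma~4.3]{Zhang2} yields the equivalence: a process $X$ solves \eqref{sec5-equ1} if and only if $Y_t:=\Phi^b(t,X_t)$ solves the driftless equation $dY_t=\Theta^b(t,Y_t)\,dB_t$, where $\Theta^b(t,y)=(\nabla\Phi^b\cdot\sigma)\circ(t,\Phi^{b,-1}(t,y))$, because $\partial_t\Phi^b+L^\sigma\Phi^b+b\cdot\nabla\Phi^b=0$ by the choice of ${\bf u}^b$. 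Here $\Theta^b$ is H\"older continuous, uniformly non-degenerate on $D$ (as $\nabla\Phi^b=I+\nabla{\bf u}^b$ is invertible with bounded inverse and $\sigma$ satisfies $({\bf H}^\sigma)$), and $\nabla_y\Theta^b\in\mL^q_p((s_0,t_0)\times D)$ by the change of variables through the bi-Lipschitz $\Phi^{b,-1}$ together with $\nabla(\nabla\Phi^b\cdot\sigma)\in\mL^q_p$, i.e. \eqref{sec4-eq01}; thus $\Theta^b$ satisfies $({\bf H}^\sigma)$ on $D$.

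It remains to solve $dY=\Theta^b(t,Y)\,dB$ on $D$ up to its exit time. By Proposition~\ref{prop6.01} I would extend $\Theta^b$ to $\widetilde\Theta^b$ satisfying $({\bf H}^\sigma)$ on all of $\RR^d$; the equation $dY=\widetilde\Theta^b(t,Y)\,dB$ with $Y_{s_0}=\Phi^b(s_0,X_{s_0})$ has a unique strong solution on $\RR^d$ by the classical strong well-posedness for driftless SDEs with uniformly non-degenerate, H\"older diffusion whose spatial gradient lies in $\mL^q_p$ with $\frac dp+\frac2q<1$ (cf. \cite{vere,Zhang,ZY}). Stopping this solution at the exit time of $D$, which equals $\tau_D$ because $\Phi^b(t,\cdot)$ maps $D$ onto $D$ and fixes $\partial D$, and setting $X_t:=\Phi^{b,-1}(t,Y_t)$, produces a strong solution of \eqref{sec5-equ1} on $[s_0,t_0\wedge\tau_D]$; pathwise uniqueness on that interval follows from pathwise uniqueness of the $Y$-equation and the bijectivity of $\Phi^b(t,\cdot)$. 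Finally, pick $0=\tau_0<\tau_1<\dots<\tau_N=T$ with $\tau_{i+1}-\tau_i\le\varepsilon$ and $N=\lceil T/\varepsilon\rceil<\infty$, solve on $[0,\tau_1\wedge\tau_D]$, restart from $X_{\tau_1}\in D$ on $[\tau_1,\tau_2\wedge\tau_D]$, and so on; uniqueness on each piece allows gluing the pieces into the unique $(X_t,0\le t\le T\wedge\tau_D)$ satisfying \eqref{sec5-equ2}.

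The step I expect to be the main obstacle is checking that the transformed coefficient $\Theta^b$, and hence its extension $\widetilde\Theta^b$, genuinely meets the hypotheses of an available strong-uniqueness theorem: the uniform ellipticity of $\Theta^b$ and the membership $\nabla\Theta^b\in\mL^q_p$ rely on the delicate a priori estimate \eqref{016} and a careful change of variables through $\Phi^{b,-1}$, and one must be scrupulous about passing from an SDE on all of $\RR^d$ to one that lives on $D$ only until the exit time, so that the values of the extension outside $D$ play no role. A subsidiary technical point is the justification of the generalized It\^o formula \cite[Lemma~4.3]{Zhang2} for the $\mW^{2,q}_p$ function $\Phi^b$ composed with $X$, which requires the localized Krylov estimate (Theorem~\ref{th19}) to control integrals such as $\int_0^{\cdot\wedge\tau_D}|L^\sigma\Phi^b(s,X_s)|\,ds$ and $\int_0^{\cdot\wedge\tau_D}|b\cdot\nabla\Phi^b(s,X_s)|\,ds$.
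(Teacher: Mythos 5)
Your proposal is essentially correct, but it takes a genuinely different route from the paper. The paper's proof of Theorem \ref{t00} is much shorter: it extends the original coefficients themselves to the whole space --- $b$ extends trivially to a compactly supported element of $\mL^q_p((0,T)\times\RR^d)$ because $D$ is bounded, and $\sigma$ extends via Proposition \ref{prop6.01} so that $({\bf H}^\sigma)$ holds on $\RR^d$ --- and then invokes the whole-space strong well-posedness theorem of \cite[Theorem 5.1]{Zhang} (which already handles an $\mL^q_p$ drift) to get a global strong solution $\tilde X$; stopping $\tilde X$ at the first exit from $D$ gives existence, and uniqueness is obtained in one line from the localized stability result, Theorem \ref{th22}, applied with identical coefficient pairs (any solution of \eqref{sec5-equ2} is a tied weak solution). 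You instead re-run the localized Zvonkin machinery inside the proof: build $\Phi^b$ from Theorem \ref{th17} on short time intervals, verify via \eqref{016} and \eqref{sec4-eq01} that the transformed coefficient $\Theta^b$ satisfies $({\bf H}^\sigma)$ on $D$, extend $\Theta^b$ by Proposition \ref{prop6.01}, solve the driftless equation on $\RR^d$, pull back through $\Phi^{b,-1}$ and glue over finitely many subintervals. What your route buys is that the only external whole-space input is the driftless theory (Veretennikov/Zhang with $b=0$), and the mechanism is fully explicit; what it costs is that you essentially reprove the transformation steps the paper has already carried out in Theorems \ref{th21} and \ref{th22}, and you still depend on the same extension device (Proposition \ref{prop6.01}), now applied to $\Theta^b$ rather than to $\sigma$. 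Two points you gloss over but which are standard and repairable: (i) deducing pathwise uniqueness of the stopped $Y$-equation on $D$ from pathwise uniqueness on $\RR^d$ requires the usual localization argument (continue each stopped solution past the exit time by the whole-space flow, then compare); (ii) the ``if and only if'' use of the generalized It\^o formula of \cite[Lemma 4.3]{Zhang2}, in both directions ($\Phi^b$ applied to $X$ and $\Phi^{b,-1}$ applied to $Y$), needs the Krylov-type estimates you mention (Theorem \ref{th19} for $X$, the classical driftless Krylov estimate for $Y$) together with the $\mW^{2,q}_p$ regularity of the inverse map; this is exactly the machinery the paper leans on inside Theorem \ref{th21}, so your reliance on it is on par with the paper's, just relocated into the proof of Theorem \ref{t00}.
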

\begin{proof}
It is obvious    that   $b$ can be extended from $D$ to $\RR^d$ so that it satisfies the condition
$({\bf H}^b)$ on the whole space $\RR^d$ (in fact, we can require that the extension has compact
support). From  Proposition \ref{prop6.01}   it follows that $\si$ can also be extended
from $D$ to $\RR^d$ so that it satisfies the condition
$({\bf H}^\si)$ on the whole space $\RR^d$ (in fact, we can also require that the extension has compact
support). We   denote these two extensions  by $\tilde b$ and $\tilde \si$. Thus,  by \cite[Theorem 5.1]{Zhang}
the equation \eqref{sec5-equ1} on the whole space $\RR^d$
\begin{equation*}
    \tilde X_t =x+\int_0^t \tilde b(s, \tilde X_s)ds+\int_0^t\tilde \sigma(s,  \tilde X_s)dB_s, ~~~
  \forall \ t\in[0,\infty)\,.
\end{equation*}
 has a unique strong solution
$(\tilde X_t, 0\le t<\infty)$. Define $\tau_D=\inf \left\{ t>0 \,; \tilde X_t\not\in D\right\}$.
Then the process
$$
X_t=\tilde X_t \,, \quad 0\le t\le  \tau_D\
$$
satisfies \eqref{sec5-equ2}.

Since a strong solution is always a weak solution the uniqueness of the solution to the  equation (\ref{sec5-equ1}) follows from Theorem \ref{th22}.
\end{proof}
It is obvious that $\tau_D$ appeared in the above context is also the life time of the process $X_t$ in the domain $D$
$$
\tau_{D}=\inf \left\{ t>0\,; \ X_t\not\in D\right\}
$$
 and $X_{\tau_D}\in \partial D$ of $\tau_D$ is finite due the continuity of the process $X_t$.

\begin{theorem}\label{th11}  Assume that $\sigma$ and $b$ satisfy conditions $({\bf H}^\sigma)$, $({\bf H}^b)$, on any bounded domain $D$ and  assume $({\bf H}^{L})$ holds  true. Then the SDE (\ref{sec1-eq1}) has a unique strong solution.  Namely, there is a unique process $(X(t), 0\le t<\infty)$ such that
\begin{equation}\label{sec5-eq2}
X_t=x+\int_0^tb(s,X_s)ds+\int_0^t\sigma(s,X_s)dB_s\,, \quad \forall \ t\in [0, \infty)\,.
\end{equation}
\end{theorem}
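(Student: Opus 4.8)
The idea is to glue the local solutions furnished by Theorem~\ref{t00} along an exhaustion of $\RR^d$ by bounded $C^2$ domains, and then use $({\bf H}^L)$ to rule out explosion. Fix $x\in\RR^d$ and choose open balls $D_n:=B_{R_n}(0)$ with $R_n\uparrow\infty$ and $x\in D_1$; each $D_n$ is a bounded connected domain with $\partial D_n\in C^2$, on which $\sigma,b$ satisfy $({\bf H}^\sigma)$ and $({\bf H}^b)$. By Theorem~\ref{t00} there is, on the given filtered probability space driven by $B$, a unique strong solution $(X^n_t,\,0\le t\le\tau_n)$ of \eqref{sec5-equ1} with $\tau_n$ the first exit time of $X^n$ from $D_n$. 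For $m<n$ we have $D_m\subset D_n$, so the first exit time of $X^n$ from $D_m$ is $\le\tau_n$ and the process $X^n$ stopped at that time is a weak solution of the SDE on $D_m$ up to its exit; by the uniqueness in Theorem~\ref{t00} (equivalently Theorem~\ref{th22} with $b=b'$, $\sigma=\sigma'$) it coincides with $X^m$ there, so $\tau_m\le\tau_n$ a.s.\ and the $X^n$ form a consistent family. Setting $\tau_\infty:=\lim_n\tau_n$ and $X_t:=X^n_t$ for $t<\tau_n$ gives a well-defined continuous adapted process on $[0,\tau_\infty)$ satisfying \eqref{sec5-eq2} for $t<\tau_\infty$.

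\textbf{Non-explosion.} It remains to prove $\tau_\infty=\infty$ a.s. Fix $T>0$ and $n$. On $[0,\tau_n]$ the process stays in $\overline{D_n}$, where $\nabla V$ and $\nabla^2V$ are bounded, and the localized Krylov estimate (Theorem~\ref{th19}) on $D_n$ gives $\int_0^{t\wedge\tau_n}|\nabla V(s,X_s)|\,|b(s,X_s)|\,ds<\infty$ a.s., so It\^o's formula is legitimately applicable to $e^{-Ct}V(t,X_t)$ on $[0,\tau_n]$. Its drift is $e^{-Ct}\big(L_tV-CV\big)(t,X_t)$, which is $\le 0$ by \eqref{e.1.3}, while $V\ge 0$; hence
\[
M^n_t:=e^{-C(t\wedge\tau_n)}V\big(t\wedge\tau_n,X_{t\wedge\tau_n}\big),\qquad 0\le t\le T,
\]
is a non-negative local supermartingale, therefore a true supermartingale, and $\mE[M^n_t]\le M^n_0=V(0,x)$. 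On $\{\tau_n\le t\}$ continuity forces $X_{\tau_n}\in\partial D_n$, i.e.\ $|X_{\tau_n}|=R_n$, so $M^n_t=e^{-C\tau_n}V(\tau_n,X_{\tau_n})\ge e^{-CT}\varrho_n$ with $\varrho_n:=\inf_{0\le s\le T,\,|y|\ge R_n}V(s,y)$. Thus $\mP(\tau_n\le t)\le e^{CT}V(0,x)/\varrho_n$, and since $\varrho_n\to\infty$ by the coercivity in $({\bf H}^L)$, letting $n\to\infty$ (the $\tau_n$ are increasing) yields $\mP(\tau_\infty\le t)=0$. As $T$ and $t$ were arbitrary, $\tau_\infty=\infty$ a.s., so $X$ solves \eqref{sec5-eq2} on $[0,\infty)$.

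\textbf{Global uniqueness and the main obstacle.} If $X,X'$ both satisfy \eqref{sec5-eq2}, then for each $n$ their restrictions stopped at the exit times from $D_n$ are weak solutions of the SDE on $D_n$, hence agree on $[0,\tau_n]$ by uniqueness in Theorem~\ref{t00}; letting $n\to\infty$ and invoking non-explosion for both gives $X=X'$ on $[0,\infty)$. I expect the delicate step to be the non-explosion argument: one must be certain It\^o's formula applies to $V(t,X_t)$ despite $b$ being only locally integrable --- this is exactly where Theorem~\ref{th19} is needed, to guarantee that the ``$\nabla V\cdot b$'' contribution is an honest absolutely continuous process --- and then control the local-martingale part so that the supermartingale/optional-sampling estimate holds uniformly in $n$; the bookkeeping of the exhausting sequence and the consistency of the patched solutions is routine by comparison.
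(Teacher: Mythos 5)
Your proposal is correct and follows essentially the same route as the paper: local strong solutions on balls via Theorem \ref{t00}, patched by uniqueness along an exhaustion, with non-explosion obtained by applying It\^o's formula to $e^{-Ct}V(t,X_t)$, using $L_tV\le CV$ and the coercivity of $V$ to bound $\mP(\tau_n\le t)$. The only (harmless) difference is bookkeeping: you pass through the nonnegative-supermartingale/Fatou argument, while the paper truncates with $\tau_{R}\wedge N$ and uses optional sampling before letting $R\to\infty$.
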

\begin{proof}
Let
$$
D_R:=\left\{ x\in \RR^d\,; \ |x|\le R\right\}
$$
be the centered ball of radius $R$ in $\RR^d$. Then by Theorem \ref{t00}
equation \eqref{sec5-eq2} has a unique solution $X_t^R$ on $D_R$ with the life time $\tau_R$.

It is obvious that the stopping time $\tau_R$ is an (almost surely) increasing function of $R$ and then it converges to a stopping time $ \tau:=\lim_{R\to\infty}\tau_R $ as $R\rightarrow\infty$. By the uniqueness of the above theorem (Theorem \ref{t00})  we have that when $R\le R'$, then $X^R_t=X^{R'}_t$ for all $t\leq\tau_R$. We can define the process $(X_t, 0\le t\le \tau)$ so that $X_t=X^R_t$ if $t\leq\tau_R$  without ambiguity. Then $X$ satisfies equation (\ref{sec5-eq2}) when $t\leq\tau$:
\begin{equation*}
X_{t\wedge \tau}=x+\int_0^{t\wedge \tau}
b(s,X_s)ds+\int_0^{t\wedge \tau}\sigma(s,X_s)dB_s\,, \quad \forall \ t\ge 0\,.
\end{equation*}
Such solution  is clearly also unique.  Thus,
the proof of the theorem is completed if we can prove $\tau =\infty$ a.s.
and this is the objective of the following.

Applying  It\^o's formula to $e^{-Ct}V(t,x)$
we obtain
\ce
de^{-Ct}V(t,X_t)
&=&(e^{-Ct}\partial_tV(t,X_t)-Ce^{-Ct}V(t,X_t))dt+e^{-Ct}\partial_xV(t,X_t)dX_t\\
&&\qquad+e^{-Ct}\frac{1}{2}\nabla_x^2V(t,X_t))d\langle X,X \rangle_t\\
& =&e^{-Ct}(L_tV(t,X_t)-CV(t,X_t)))dt+e^{-Ct}\langle \nabla V(t,X_t),\sigma(t,X_t)\rangle dB_t.
\de
Thus we have by  the assumption  $({\bf H}^{L})$,
\ce
&&e^{-C\tau_R}V(\tau_R,X_{\tau_R})\leq V(0,x)+\int_0^{\tau_R}e^{-Cs}\langle \nabla V(s,X_s),\sigma(s,X_s)\rangle dB_s\,.
\de
Replacing  the above $\tau_R$ by $\tau_{R, N}=\tau_R\wedge N$  for any positive integer $N$ yields
 \ce
&&e^{-C\tau_{R, N} }V(\tau_{R, N},X_{\tau_{R, N}}  )\leq V(0,x)+\int_0^{\tau_{R, N}}e^{-Cs}\langle \nabla V(s,X_s),\sigma(s,X_s)\rangle dB_s\,.
\de
Since $\sigma$ is uniformly bounded and $\nabla V(s,X_s)I_{\{0\leq s\leq \tau_R\}}\leq C$, taking the expectation yields
\begin{equation}\label{sec5-eq04}
\mE e^{-C\tau_{R, N}}V(\tau_{R, N},X_{\tau_{R, N}})\leq  V(0,x)<\infty.
\end{equation}
Denote  $A_{N,R}:=\{\tau_R\leq N\}$.   The above
inequality (\ref{sec5-eq04})
yields
\ce
e^{-CN}\mE [V(\tau_R,X_{\tau_R})I_{A_{N,R}}]\leq\mE e^{-C\tau_{R, N}}V(\tau_{R, N},X_{\tau_{R, N}})\leq  V(0,x)<\infty.
\de
Thus,
\ce
\mP (A_{N,R})\leq\frac{e^{ CN}   V(0,x_0)}{ \inf _{  0\le t<\infty , |x|=R}V(t, x)}\,.
\de
Letting $R\rightarrow\infty$,  the condition $\lim_{|x|\rightarrow\infty}\inf _{  0\le t<\infty , |x|=R} V(t,x)=\infty$  gives
$$
\lim_{R\rightarrow\infty}
\mP(A_{N,R})=0\,,
$$
 implying in particular  $\mP(\tau\leq N)=0$ for any integer $N$. Thus $\lim_{R\uparrow\infty}\tau_R=\infty$ \   a.s.   \ This completes the proof of the theorem.
\end{proof}

\section{\bf  Appendix}\label{s.a}
Let $D$ be a  domain in $\mathbb{R}^d$ and $D^c=\mathbb{R}^d \backslash D$.
Let $f$ be a measurable function  from the   domain $D$ to $\mathbb{R}^d$. The local Hardy-Littlewood maximal operator $\mathcal{M}_D$ is defined by
\begin{equation}\label{sec6-eq01}
\mathcal{M}_D f(x):=\sup_{0<r<dist(x,D^c)}\frac{1}{|A(x,r)|}\int_{A(x,r)}|f(y)|dy,
\end{equation}
where $A(x, r)$ is a ball in $\mathbb{R}^d$ centered at $x$ with radius $r$ and $|A(x,r)|$ denotes the volume
of the ball $A(x, r)$. When $D=\mathbb{R}^d$, the operator $\mathcal{M}_D$ coincides with the classical centered Hardy-Littlewood maximal operator $\mathcal{M}$ (see  (2.4)).  It is  well known that $\mathcal{M}$ is $L^p(\mathbb{R}^d)$ bounded for $1<p\leq\infty$   (\cite[Theorem1.1]{Stein}). A simple observation   yields  that { $\mathcal{M}_Df(x)\leq \mathcal{M}(fI_{D})(x)$ for all $x\in D$ with $I_{\cdot}$ denoting  the indicator   function. Therefore, $\mathcal{M}_D$ is also bounded on $L^p(D)$ for $1<p\leq\infty$.

\begin{proposition}\label{prop6.01}
Assume $D$ is bounded  and  $\partial D$ is Lipschitz, $1\leq p<\infty$. Let $\bar D=D\cup \partial D$ be continuously embedded in another   open set $ V$. Then there exists a bounded linear operator $Q$,
\ce
Q: W^1_p(D)\rightarrow W^1_p(\mR^d)
\de
such that
\ce
Qf=f ~~~on~~~ D\,,\quad \forall \ f\in W^1_p(D)
\de
and the support of $Qf\subseteq V$.
Furthermore, if $f$ is   H\"older continuous on $D$, then $Qf$  is also
 H\"older continuous on $\mR^d$ of the same H\"older exponent.
\end{proposition}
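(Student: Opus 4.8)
The plan is to prove this extension result by the classical flattening--reflection--partition-of-unity construction underlying the Sobolev extension theorem (compare \cite{Stein}), while carrying along the two extra features we need: that the extension can be forced to have support inside $V$, and that it maps H\"older functions to H\"older functions of the same exponent. Since $\partial D$ is compact and Lipschitz, I would first cover $\partial D$ by finitely many open balls $U_1,\dots,U_N$, each contained in $V$, such that after a rotation of coordinates in $U_i$ one has $D\cap U_i=\{x=(x_1,x')\in U_i:\ x_1>\gamma_i(x')\}$ for a Lipschitz function $\gamma_i$. Adjoining an open set $U_0$ with $\overline{U_0}\subseteq D$ so that $\overline D\subseteq\bigcup_{i=0}^{N}U_i$, and choosing a smooth partition of unity $\{\varphi_i\}_{i=0}^{N}$ with $\varphi_i\in C_c^\infty(U_i)$ and $\sum_{i}\varphi_i\equiv1$ near $\overline D$, it suffices to construct for each $i\ge1$ a bounded linear operator $Q_i:W^1_p(D\cap U_i)\to W^1_p(U_i)$ that restricts to the identity on $D\cap U_i$ and maps $\cC^\delta(D\cap U_i)$ boundedly into $\cC^\delta(U_i)$ for all $0<\delta<1$. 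I would then set
\begin{equation*}
Qf=\chi\Big(\varphi_0 f+\sum_{i=1}^{N}\varphi_i\,Q_i(f|_{D\cap U_i})\Big),
\end{equation*}
with $\chi\in C_c^\infty(V)$ equal to $1$ near $\overline D$; each term $\varphi_i\,Q_i(f|_{D\cap U_i})$ is supported in $U_i$ hence extends by zero to $\Rd$, on $D$ the bracket collapses to $(\sum_i\varphi_i)f=f$, and $\mathrm{supp}\,Qf\subseteq\mathrm{supp}\,\chi\subseteq V$.

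\textbf{The local extension.} For fixed $i$, the map $\Psi(x)=(x_1-\gamma_i(x'),\,x')$ is bi-Lipschitz, flattens $\partial D\cap U_i$ into the hyperplane $\{y_1=0\}$ and carries $D\cap U_i$ into the half-space $\Rd_+=\{y_1>0\}$, with $\Psi$ and $\Psi^{-1}$ having Lipschitz constants controlled by $\mathrm{Lip}(\gamma_i)$. I would use that precomposition with a bi-Lipschitz map is a bounded operator on $W^1_p$ (the a.e.\ chain rule, via Rademacher's theorem) and maps $\cC^\delta$ into $\cC^\delta$ with the same exponent, since $|g\circ\Psi(x)-g\circ\Psi(y)|\le[g]_{\cC^\delta}(\mathrm{Lip}\,\Psi)^{\delta}|x-y|^{\delta}$. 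On the half-space I would use the even reflection $E_0 h(y_1,y')=h(|y_1|,y')$, which is bounded $W^1_p(\Rd_+)\to W^1_p(\Rd)$ --- a single reflection suffices because the Sobolev order here is $1$ --- and plainly preserves $\cC^\delta$ (for two points on opposite sides of $\{y_1=0\}$ compare through a point of the hyperplane). Taking $Q_i g$ to be the pullback by $\Psi$ of $E_0$ applied to $g\circ\Psi^{-1}$ produces a bounded operator with the required properties.

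\textbf{Assembly, H\"older bookkeeping, and the main difficulty.} Boundedness of $Q$ on $W^1_p$ then follows by composing the finitely many bounded maps above, together with the fact that multiplication by the smooth compactly supported functions $\varphi_i$ and $\chi$ is bounded on $W^1_p(\Rd)$. For the H\"older assertion: each $Q_i$ preserves the H\"older exponent; multiplication of a bounded $\cC^\delta$ function by a smooth compactly supported function remains in $\cC^\delta$, because on the (bounded) support the extra term $\mathrm{Lip}(\varphi_i)\,|x-y|$ is dominated by a constant multiple of $|x-y|^\delta$; and a finite sum of $\cC^\delta$ functions lies in $\cC^\delta$. Since $D$ is bounded, a H\"older $f$ on $D$ is automatically bounded, so $Qf\in\cC^\delta(\Rd)$ with exponent $\delta$ and a controlled seminorm. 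The step I expect to require genuine care --- as opposed to routine bookkeeping --- is the Lipschitz (rather than $C^1$ or $C^2$) regularity of $\partial D$: one must verify that pre- and post-composition with merely bi-Lipschitz charts is simultaneously bounded on $W^1_p$ and H\"older-exponent preserving (this is exactly where Rademacher's theorem and the a.e.\ chain rule are used), and check that every constant depends only on $p$, $d$, the $\mathrm{Lip}(\gamma_i)$, the number $N$ of charts, and the fixed cutoffs, not on $f$.
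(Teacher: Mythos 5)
Your construction is essentially the paper's own: the paper follows Evans--Gariepy's extension by reflection across the Lipschitz graph, $f^-(y)=f(y',2\gamma(y')-y_d)$, combined with a finite boundary cover and a partition of unity, and your flatten-then-even-reflect local operator is exactly that reflection rewritten in flattened coordinates, with the same H\"older bookkeeping. The only cosmetic differences are that the paper cites Evans--Gariepy for the $W^1_p$ boundedness and proves only the H\"older preservation (by direct case analysis on the reflected formula), and it applies the cutoff functions before extending rather than after.
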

\begin{proof}
The first part of the theorem  stems from \cite[Theorem 4.4.1]{Evans}. So, we    only need to prove that if $f$ is  H\"older continuous on $D$, then $Qf$ is  H\"older continuous on $\mR^d$. We will use the same
extension operator $Q$ introduced in the proof of \cite[Theorem 4.4.1]{Evans}, which we recall briefly
now.

We follow the same notations of  \cite[Theorem 4.4.1]{Evans}.
For any point   $x=(x_1,\cdots,x_d)\in \mR^d$, let us write $x=(x',x_d)$,  where  $x'=(x_1,\cdots,x_{d-1})\in \mR^{d-1}$, $x_d\in \mR$.

Given $x\in \mR^d$, and $r, h>0$, denote  the open cylinder
\ce
C(x,r,h)\equiv \{y\in \mR^d\,;  \quad |x'-y'|<r, |y_d-x_d|<h\}.
\de
Since $\partial U$ is Lipschitzian,  for each $x\in \partial D$,  there exist (upon rotating and
relabeling the coordinate axes if necessary ) two numbers $r, h>0$ and a Lipschitz function $\gamma: \mR^{d-1}\rightarrow \mR$
such that
\ce
D\cap C(x,r,h)=\{y\in \mR^d\,;\quad |x'-y'|<r, \quad \gamma(y')<y_d<x_d+h\}.
\de
Fix $x\in\partial D$.   With $r, h, \gamma$ as above, write
\ce
C:= C(x,r,h),~~C':= C(x,r/2,h/2),~~~D^+:= C'\cap D,  ~~~D^-:= C'\backslash \overline{D}.
\de
Set
\ce
\begin{cases}
f^+(y)=f(y)  &\qquad\hbox{if $y\in\overline{D}^+ $}\,,\\
f^-(y)=f(y',2\gamma(y')-y_d), &\qquad\hbox{if $y\in\overline{D}^-$}\,. \\
\end{cases}
\de
Note $f^+=f^-=f$ on $\partial D\cap C'$.

First, we assume that $\supp(f)\subseteq  D\cap C'(x,r,h)$. The extension of $f$
  is defined as follows
(see the proof of \cite[Theorem 4.4.1]{Evans}):
\begin{equation}\label{e.6.01}
Qf=\begin{cases}
f^+ &\qquad  ~~~on ~~~\overline{D}^+, \\
f^-&\qquad ~~~on ~~~\overline{D}^- ,\\
0    &\qquad ~~~ on ~~~\mR^d\backslash (\overline{D}^+\cup \overline{D}^-).
\end{cases}
\end{equation}
It is proved in \cite[Theorem 4.4.1]{Evans} that if $f\in W^1_p(D)$,  then
$Qf\in  W^1_p(\mR^d)$.

We want to show that if $f$ is H\"older continuous,
$$
|f(y)-f(z)|\le C|y-z|^\alpha\quad \forall \  y, z\in D,
$$
for some constant $\al\in (0, 1)$ and
positive constant $C$,  then $Qf$ is also H\"older continuous with the same exponent $\al$:
$$
|Qf(y)-Qf(z)|\le C| y-z|^\alpha\quad \forall \   y,z\in \mR^d.
$$

We shall prove the above by dividing our discussion into three cases.

Case I: When  $y,z\in \overline{D}^+$, or $y,z\in \overline{D}^-$, or $y,z\in \mR^d \backslash  (\overline{D}^+\cup   \overline{D}^-)$, it is obvious that $Qf$ is H\"older continuous.

Case II:  When  $y\in \overline{D}^+$ and $z\in \overline{D}^-$, by the definitions of $\overline{D}^+,\overline{D}^-$,  it holds  $y_d>\gamma(y')$ and $z_d<\gamma(z')$.  By the H\"older continuity of $f$, we have
\ce
|Qf(y)-Qf(z)|&=&|f(y',y_d)-f(z',2\gamma(z')-z_d)|\leq C\Big(|y'-z'|^{\alpha}+|y_d+z_d-2\gamma(z')|^{\alpha}\Big)\\
&&\leq C\Big(|y'-z'|^{\alpha}+|y_d-\gamma(y')+z_d-\gamma(z')|^{\alpha}+|\gamma(y')-\gamma(z')|^{\alpha}\Big)\\
&&\leq C \Big( |y'-z'|^{\alpha}+|y_d-\gamma(y')+z_d-\gamma(z')|^{\alpha}\Big)\,.
\de
If  $y_d-\gamma(y')>|z_d-\gamma(z')|$,  then
\ce
|y_d-\gamma(y')+z_d-\gamma(z')|&=&y_d-\gamma(y')+z_d-\gamma(z')\leq |y_d-\gamma(y')-z_d+\gamma(z')|\\
&&\leq|y_d-z_d|+|\gamma(y')-\gamma(z')|\leq|y_d-z_d|+C|y'-z'|\,.
\de
If  $y_d-\gamma(y')\leq |z_d-\gamma(z')|$,   then
\ce
|y_d-\gamma(y')+z_d-\gamma(z')|&=&-y_d+\gamma(y')-z_d+\gamma(z')\leq|y_d-\gamma(y')-z_d+\gamma(z')|\\
&&\leq|y_d-z_d|+C |y'-z'|.
\de
Thus,  we obtain
\ce
|Qf(y)-Qf(z)|\leq C \left(|y'-z'|^{\alpha}+|y_d-z_d |^{\alpha}\right)\le C|y-z|^\al .
\de
Case III: When  $y\in \overline{D}^+$, and $z\in \mR^d\backslash(\overline{D}^+\cup \overline{D}^-)$, let $u\in \partial \overline{D}^+$ be the intersection of  boundary $\partial \overline{D}^+$  and the line
connecting the two points $y,z$.  By the continuity of $f$ we see  $ f(u)=0$. Then we have
\ce
|Qf(y)-Qf(z)|=|f(y)|=|f(y)-f(u)|\leq C|y-u|^{\alpha}\leq C|y-z|^{\alpha}.
\de
Similarly, when  $y\in \overline{D}^-$, and $z\in \mR^d\backslash(\overline{D}^+\cup \overline{D}^-)$, it also holds
\ce
|Qf(y)-Qf(z)|\leq C|y-z|^{\alpha}.
\de
Thus, this proposition is proved in case $f$ with support in $C'\cap \overline{D}$.

Now we remove  the restriction $\supp(f)\subseteq C'\cap \overline{D}$. Since $\partial D$ is compact, we can cover $\partial D$ with finitely many cylinders $C'_k=C(x_k,r_k/2,h_k/2)$ with each $x_k\in \partial D$, $k=1,2,\cdots,N$. Let $\{\eta_k\}^N_{k=0}$ be a sequence of smooth functions (see \cite[Theorem 2.13]{rudin}
for a construction) such that
$$
\begin{cases}
0\leq \eta_k\leq 1\quad\hbox{and}\quad \supp(\eta_k)\subseteq C'_k, ~~~(k=1,2,\cdots,N),\\
0\leq \eta_0\leq 1\quad\hbox{and}\quad\supp(\eta_0)\subseteq D,\\
\sum^N_{k=0}\eta_k=1,  ~~~ on ~~~D.
\end{cases}
$$
Define $Q(\eta_kf)$ ($k=1,2,\cdots,N$) as above (\ref{e.6.01}) and let $Qf:=\sum^N_{k=1}Q(\eta_kf)+\eta_0f$. For any $y,z\in \mR^d$, we have  by the above argument
\ce
|Qf(y)-Qf(z)|&\leq& \sum^N_{k=1}|Q(\eta_kf)(y)-Q(\eta_kf)(z)|+|\eta_0(y)f(y)-\eta_0(z)f(z)|\\
&\leq& CN|y-z|^{\alpha}+|\eta_0(y)f(y)-\eta_0(z)f(z)|.
\de
To show the H\"older continuity of $\eta_0  f$, we also divide
our discussion into three cases: Case I: both $y$ and $z$ are in $D$, Case II: both $y$ and $z$ are not in $D$ and
Case III: $y\in D$ and $z\not\in D$.  Case II is easy.

Case I:  $y,z\in D$.  In this case we have
\ce
|\eta_0(y)f(y)-\eta_0(z)f(z)|& \leq& |\eta_0(y)(f(y)-f(z))|+|(\eta_0(y)-\eta_0(z))f(z)|\\
& \leq & |f(y)-f(z)|+|\eta_0(y)-\eta_0(z)|\leq C|y-z|^{\alpha}\,.
\de

Case III: $y\in D$ and $z\not\in D$.  In this case we notice that
$\eta_0(z)=0$ when $z\not\in D$ and we   have   then
\ce
|\eta_0(y)f(y)-\eta_0(z)f(z)|& =& |\eta_0(y)f(y) |=|\eta_0(y)f(y)-\eta_0( z)f(y )|\\
&\le&  |f(y)| |\eta_0(y)-\eta_0( z)| \leq C|y-z|^{\alpha}\,.
\de
Then, for any $y,z\in \mR^d$, we have
\ce
|Qf(y)-Qf(z)|\leq C|y-z|^{\alpha}.
\de
Finally, the proof of this proposition is completed.
\end{proof}

\begin{lemma}\label{le6.02} Assume $D$ is a bounded, connected with $C^2$ boundary domain in $\mR^d$. Assume that $\sigma$ and $b$ satisfy conditions $({\bf H}^\sigma)$, $({\bf H}^b)$, respectively. Let  $\Phi(t,x):=x+{\bf u}(t,x)$ with ${\bf u}(t,x)=\left(u^1(t,x),u^2(t,x),\ldots,u^d(t,x)\right)$ satisfying following  PDE:
\begin{equation}
\partial_tu^{\ell}+L^{\sigma}u^{\ell}+b\cdot\nabla u^{\ell}+b^{\ell}=0, ~~~\ell=1,2,\cdots,d
\end{equation}
and  ${\bf u}(t,x)=0$ for $x\in\partial D$, then the image of $\Phi(t,x)$ is $D$.
\end{lemma}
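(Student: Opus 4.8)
The plan is to show that $\Phi(t,\cdot):D\to\RR^d$ maps $D$ bijectively onto $D$ for each fixed $t\in[0,T]$, by combining three ingredients: the boundary behaviour of $\Phi$, the local bi-Lipschitz/injectivity estimate already used in the proof of Theorem \ref{th21}, and a topological degree (or connectedness) argument. First I would record that since ${\bf u}(t,x)=0$ on $\partial D$, we have $\Phi(t,x)=x$ for every $x\in\partial D$; hence $\Phi(t,\cdot)$ fixes the boundary pointwise, and in particular $\Phi(t,\partial D)=\partial D$. Next, I would invoke the a priori H\"older estimate \eqref{016} of Theorem \ref{th17}: taking $f=b^\ell$, $g=0$, and noting $\delta:=\tfrac12-\tfrac{d}{2p}-\tfrac1q>0$, we get $\|\nabla{\bf u}(t,\cdot)\|_{\cC^{\delta/2}(D)}\le C(T-t)^{\delta/3}\exp\{\cdots\}\|b\|^q_{\mL^q_p((0,T)\times D)}$. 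Unlike in Theorem \ref{th21}, here $t$ ranges over all of $[0,T]$, so this bound need not be $\le \tfrac12$; therefore I cannot directly conclude global injectivity of $\Phi(t,\cdot)$ from the contraction estimate \eqref{e.4.1} on the whole of $[0,T]$. To fix this I would partition $[0,T]$ into finitely many subintervals $[s_0,t_0]$ of length $\le\varepsilon$ (the same $\varepsilon$ as in Theorem \ref{th21}), on each of which $\|\nabla{\bf u}(t,\cdot)\|_{\cC^{\delta/2}(D)}\le\tfrac12$, giving $\tfrac12|x-y|\le|\Phi(t,x)-\Phi(t,y)|\le\tfrac32|x-y|$ for $t$ in that subinterval; in particular $\Phi(t,\cdot)$ is injective on $\overline D$ for every such $t$, hence for every $t\in[0,T]$.

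With injectivity and the boundary identity $\Phi(t,\cdot)|_{\partial D}=\mathrm{id}$ in hand, the image claim follows from an invariance-of-domain / degree argument. I would argue as follows: $\Phi(t,\cdot)$ is continuous and injective on the open set $D$, so by invariance of domain $\Phi(t,D)$ is open in $\RR^d$ and $\Phi(t,\cdot):D\to\Phi(t,D)$ is a homeomorphism. Moreover $\Phi(t,\cdot)$ extends continuously to $\overline D$ and maps $\partial D$ into $\partial D$. For a continuous injective map of $\overline D$ that is the identity on $\partial D$, the Brouwer degree on $D$ at any interior point $w\notin\Phi(t,\partial D)=\partial D$ equals the degree of the identity, which is $1$ for $w\in D$ and $0$ for $w\notin\overline D$; since degree is a homotopy invariant and $\Phi(t,\cdot)$ is homotopic rel $\partial D$ to the identity through the maps $x\mapsto x+s\,{\bf u}(t,x)$, $s\in[0,1]$ (each of which fixes $\partial D$ because ${\bf u}(t,\cdot)|_{\partial D}=0$), the degree is identically that of the identity. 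Hence $\Phi(t,D)\supseteq D$. Combined with $\Phi(t,\overline D)\subseteq\overline D$ (which I would deduce from openness of $\Phi(t,D)$, continuity up to the boundary, and $\Phi(t,\partial D)=\partial D$: $\Phi(t,D)$ is an open subset of $\overline D$ disjoint from $\partial D$, hence $\subseteq D$, and its closure meets $\partial D$ only, forcing $\Phi(t,D)=D$), we conclude $\Phi(t,D)=D$ exactly.

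An alternative route avoiding degree theory, which I might present instead for self-containedness: $\Phi(t,D)$ is open (invariance of domain); it is also relatively closed in $D$ in the following sense — if $x_n\in D$ with $\Phi(t,x_n)\to w\in D$, then by the lower bound $\tfrac12|x_n-x_m|\le|\Phi(t,x_n)-\Phi(t,x_m)|$ on each subinterval the sequence $x_n$ is Cauchy, so $x_n\to x\in\overline D$; if $x\in\partial D$ then $w=\Phi(t,x)=x\in\partial D$, contradicting $w\in D$, so $x\in D$ and $w=\Phi(t,x)\in\Phi(t,D)$. Thus $\Phi(t,D)$ is a nonempty (it contains $\Phi(t,x_0)$ for any $x_0$, and one checks it is nonempty e.g. via the boundary values) open and closed subset of the connected set $D$, hence $\Phi(t,D)=D$.

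The main obstacle, as flagged above, is that \eqref{016} does not give the contraction bound $\|\nabla{\bf u}(t,\cdot)\|_{\cC^{\delta/2}}\le\tfrac12$ uniformly on $[0,T]$ — only on short subintervals — so global injectivity of $\Phi(t,\cdot)$ for a \emph{fixed} $t$ must be obtained by the subinterval decomposition (this works because the estimate holds for every $t$ lying in \emph{some} short subinterval, and injectivity at time $t$ only needs the bound at that single $t$). The remaining work — invariance of domain plus the connectedness/degree argument to pass from ``$\Phi(t,\cdot)$ injective and fixing $\partial D$'' to ``$\Phi(t,D)=D$'' — is standard point-set topology once those facts are secured.
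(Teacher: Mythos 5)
Your topological half of the argument is essentially an expanded version of what the paper does: the paper's proof simply observes that $\Phi(t,\cdot)$ fixes $\partial D$ (since ${\bf u}=0$ there) and cites the Jordan--Brouwer separation theorem, while you make the same idea precise via Brouwer degree / invariance of domain plus a connectedness argument. Granting injectivity of $\Phi(t,\cdot)$ on $\overline D$ together with $\Phi(t,\cdot)|_{\partial D}=\mathrm{id}$, your degree-plus-connectedness reasoning does yield $\Phi(t,D)=D$, and it is in fact more careful than the paper's one-line proof, which never mentions injectivity even though some such ingredient is needed (a non-injective continuous map fixing $\partial D$ can send interior points outside $\overline D$, so ``the image has the same boundary as $D$'' is not automatic).

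The genuine gap is in how you obtain injectivity. Estimate \eqref{016} bounds $\|\nabla{\bf u}(t,\cdot)\|_{\cC^{\delta/2}(D)}$ by a constant times $(T-t)^{\delta/3}$, where $T$ is the terminal time of the PDE that ${\bf u}$ solves; this is small only for $t$ near that terminal time. Partitioning $[0,T]$ into short subintervals does not change ${\bf u}$, so for a solution posed on all of $(0,T)$ you cannot conclude $\|\nabla{\bf u}(t,\cdot)\|_{\cC^{\delta/2}(D)}\le\frac{1}{2}$ for every $t$ merely ``because $t$ lies in some short subinterval'': for $t$ far from $T$ the factor is of order $T^{\delta/3}$, not $(t_0-s_0)^{\delta/3}$. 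In the proof of Theorem \ref{th21} the smallness on $[s_0,t_0]$ comes from re-solving the PDE on $(s_0,t_0)$ with zero terminal data at $t_0$, i.e.\ a different ${\bf u}$ (hence a different $\Phi$) is used on each time piece, and the lemma is invoked for these subinterval transforms. So either the lemma should be read as applying to such short-time solutions (which is how it is used in Theorems \ref{th21} and \ref{th22}), or injectivity of $\Phi(t,\cdot)$ needs a separate argument; as written, your partition step does not deliver the bi-Lipschitz bound, and with it falls the injectivity on which both your degree argument and your open-and-closed alternative rely. (Your alternative route has the further soft spot you yourself flag: nonemptiness of $\Phi(t,D)\cap D$ is not justified without the degree step.)
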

\begin{proof}Since the image of $\Phi(t,x)$ has the same boundary as $D$, and $D$ is a bounded, connected with $C^2$ boundary domain,  therefore by the Jordan-Brouwer Separation Theorem (see \cite{Guillemin}),we have that the image of $\Phi(t,x)$ is $D$.
\end{proof}

\begin{lemma}\label{le6.1}
Assume that $\{\beta(t) , t\in [0, T]\}$ is a nonnegative measurable process adapted to a flow $\{\mathcal{F}_t , t\in [0, T]\}$ of $\sigma$-algebras on some probability space $(\Omega, \mathcal{F}, P)$ (this means that $\mathcal{F}_s\subseteq\mathcal{F}_t\subseteq\mathcal{F}$ for $0<s<t<T$), and the random variable $\beta(t)$ is $\mathcal{F}_t$ measurable for each $t\in[0, T)$. Further, assume that for $0<s<t<T$,  and any stopping time $\tau$,
$$
\mathbb{E}\left \{\int_{s\wedge\tau}^{t\wedge\tau}\beta(r)dr|\mathcal{F}_{s\wedge\tau}\right \}\leq \rho(s,t),
$$
where $\rho(s,t)$ is a nonrandom interval function satisfying the following conditions:\\
(i)\quad$\rho(t_1,t_2)\leq \rho(t_3,t_4)$ if $(t_1,t_2)\subseteq(t_3,t_4)$.\\
(ii)\quad$\lim_{h\rightarrow0}\sup_{0<s\leq t\leq T,t-s\leq h}\rho(s,t)=\kappa_0, \kappa_0\geq0$.\\
Then for an arbitrary real $\lambda<\kappa_0^{-1}$,
$$
\mathbb{E}\exp\left \{\lambda\int_0^{T\wedge\tau}\beta(r)dr\right \}<\infty.
$$
\end{lemma}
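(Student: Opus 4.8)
The plan is to prove this Khas'minskii-type exponential integrability lemma by the standard iteration argument. First I would fix $\lambda$ with $0 \le \lambda < \kappa_0^{-1}$ (the case $\lambda \le 0$ being trivial since the exponent is then bounded by $1$), and choose $h>0$ small enough that
\[
\sup_{0<s\le t\le T,\ t-s\le h} \lambda\,\rho(s,t) =: \theta < 1,
\]
which is possible by hypothesis (ii). Partition $[0,T]$ into finitely many subintervals $0=t_0<t_1<\cdots<t_m=T$ each of length at most $h$; by submultiplicativity of the exponential it suffices to bound $\mathbb{E}\exp\{\lambda\int_{t_{k-1}}^{t_k}\beta(r)\,dr\}$ on each piece, or more precisely to control the conditional expectations and chain them together. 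So I would reduce to showing that for an interval $(a,b)\subseteq[0,T]$ with $b-a\le h$,
\[
\mathbb{E}\!\left(\exp\Big\{\lambda \int_{a\wedge\tau}^{b\wedge\tau}\beta(r)\,dr\Big\}\ \Big|\ \mathcal{F}_{a\wedge\tau}\right) \le \frac{1}{1-\theta}.
\]

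The heart of the argument is a Taylor-expansion/iteration identity for $A := \lambda\int_{a\wedge\tau}^{b\wedge\tau}\beta(r)\,dr$. Writing $e^A = \sum_{n\ge 0} A^n/n!$ and using the elementary identity
\[
A^n = n!\int_{a\wedge\tau < r_1 < \cdots < r_n < b\wedge\tau} \lambda^n \beta(r_1)\cdots\beta(r_n)\,dr_1\cdots dr_n,
\]
I would estimate $\mathbb{E}(A^n/n! \mid \mathcal{F}_{a\wedge\tau})$ by successively conditioning from the innermost integration variable $r_n$ outward. The key point: conditioning on $\mathcal{F}_{r_{n-1}\wedge\tau}$ and applying the hypothesis with the stopping time $\tau$ (and the adaptedness of $\beta$, so that $\beta(r_1)\cdots\beta(r_{n-1})$ is $\mathcal F_{r_{n-1}\wedge\tau}$-measurable) gives
\[
\mathbb{E}\!\left(\lambda\!\int_{r_{n-1}\wedge\tau}^{b\wedge\tau}\!\beta(r_n)\,dr_n \ \Big|\ \mathcal{F}_{r_{n-1}\wedge\tau}\right) \le \lambda\,\rho(r_{n-1},b) \le \lambda\,\rho(a,b) \le \theta,
\]
using monotonicity (i) for the middle inequality. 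Peeling off one layer at a time by the tower property yields $\mathbb{E}(A^n/n!\mid\mathcal F_{a\wedge\tau}) \le \theta^n$, hence $\mathbb{E}(e^A\mid\mathcal F_{a\wedge\tau}) \le \sum_{n\ge0}\theta^n = (1-\theta)^{-1}$.

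Finally I would stitch the subintervals together: starting from the rightmost piece and moving left, apply the tower property repeatedly, so that
\[
\mathbb{E}\exp\Big\{\lambda\int_0^{T\wedge\tau}\beta(r)\,dr\Big\}
= \mathbb{E}\!\left(\prod_{k=1}^m \exp\Big\{\lambda\int_{t_{k-1}\wedge\tau}^{t_k\wedge\tau}\beta(r)\,dr\Big\}\right)
\le (1-\theta)^{-m} < \infty.
\]
I expect the main obstacle to be the careful bookkeeping in the iterated-conditioning step: one must verify that the stopped hypothesis applies at each level with the \emph{same} stopping time $\tau$, that the lower limit $r_{n-1}\wedge\tau$ in the inner integral is itself a stopping time so the hypothesis is legitimately invoked, and that $\beta(r_1)\cdots\beta(r_{n-1})$ can be pulled out of the innermost conditional expectation — this requires the adaptedness assumption and a measurability check on $\{r_{n-1}<\tau\}$. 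Once those measurability points are pinned down, the combinatorial identity for $A^n$ and the geometric summation are routine. I would also remark that $m$ depends only on $T$ and $h$ (hence on $T$, $\lambda$ and the function $\rho$), so the final bound is finite.
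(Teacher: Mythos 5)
Your proposal is correct and follows essentially the same route as the paper: a Khas'minskii-type iteration giving the bound $(\lambda\rho(s,t))^k$ for the $k$-th iterated term on subintervals where $\lambda\rho<1$, geometric summation of the conditional expectation, and then chaining over a finite partition of $[0,T]$ by the tower property. The only difference is technical rather than conceptual — the paper first truncates $\beta$ to $\beta_N$, iterates a Duhamel-type identity for the exponential, and passes to the limit $N\to\infty$, whereas you expand $e^A$ directly into ordered iterated integrals and use nonnegativity (Tonelli/conditional monotone convergence), which legitimately avoids the truncation, and the measurability points you flag (that $\{r<\tau\}\in\mathcal F_{r\wedge\tau}$ and that $\beta(r_i)1_{\{r_i<\tau\}}$ is $\mathcal F_{r_i\wedge\tau}$-measurable) do check out.
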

\begin{proof} First we introduce the truncated process
\begin{align*}
\begin{split}
\beta_N(t)=\left\{
              \begin{array}{ll}
                \beta(t), ~~~&if~~~ \beta(t)\leq N,\\
                0, ~~~&if~~~  \beta(t)> N,
                \end{array}
                 \right.
\end{split}
\end{align*}
where $t\in[0, T \wedge \tau]$ and $N$ is a positive number ($N \rightarrow \infty$ later). Since $\beta_N(t)$ is a bounded process, for all real $\lambda$,
$$
\mathbb{E}\exp\left \{\lambda\int_0^{T\wedge\tau}\beta_N(r)dr\right \}\leq e^{\lambda N T}<\infty.
$$
For $0<s<t<T$ and for real $\lambda$ let
$$
\varphi_{\lambda}(s,t,N):=\mathbb{E}\left [\exp\left \{\lambda\int_{s\wedge \tau}^{t\wedge\tau}\beta_N(r)dr\right \}|\mathcal{F}_{s\wedge \tau}\right ].
$$
Using  the identity
$$
\exp\left \{\lambda\int_s^t\beta_N(r)dr\right \}=1+\lambda\int_s^t\beta_N(r)\exp\left \{\lambda\int_r^t\beta_N(\theta)d\theta\right\}dr 
$$
and taking conditional expectation on both sides, we get an equation for the function $\varphi_{\lambda}(s,t,N)$
\begin{align}
\varphi_{\lambda}(s,t,N)&=1+\lambda\mathbb{E}\left [\int_{s\wedge \tau}^{t\wedge \tau}\beta_N(r)\exp\left \{\lambda\int_{r\wedge \tau}^{t\wedge \tau}\beta_N(\theta)d\theta\right\}dr|\mathcal{F}_{s\wedge \tau}\right ]\\ \nonumber
&=1+\lambda\mathbb{E}\left [\mathbb{E}\left [\int_{s\wedge \tau}^{t}\beta_N(r)1_{r\leq\tau}\exp\left \{\lambda\int_{r\wedge \tau}^{t\wedge \tau}\beta_N(\theta)d\theta\right\}|\mathcal{F}_{r\wedge \tau}dr\right ]|\mathcal{F}_{s\wedge \tau}\right ]\\\nonumber
&=1+\lambda\mathbb{E}\left [\left [\int_{s\wedge \tau}^{t}\mathbb{E}\beta_N(r\wedge\tau)1_{r\leq\tau}\exp\left \{\lambda\int_{r\wedge \tau}^{t\wedge \tau}\beta_N(\theta)d\theta\right\}|\mathcal{F}_{r\wedge \tau}dr\right ]|\mathcal{F}_{s\wedge \tau}\right ]\\\nonumber
&=1+\lambda\mathbb{E}\left [\left [\int_{s\wedge \tau}^{t\wedge \tau}\beta_N(r\wedge\tau)\mathbb{E}\exp\left \{\lambda\int_{r\wedge \tau}^{t\wedge \tau}\beta_N(\theta)d\theta\right\}|\mathcal{F}_{r\wedge \tau}dr\right ]|\mathcal{F}_{s\wedge \tau}\right ]\\\nonumber
&=1+\lambda\mathbb{E}\left [\int_{s\wedge \tau}^{t\wedge \tau}\beta_N(r)\varphi_{\lambda}(r,t,N)dr|\mathcal{F}_{s\wedge \tau}\right].
\end{align}
Thus, by induction,
$$
\varphi_{\lambda}(s,t,N)=\sum_{k=0}^\infty\varphi^{(k)}_{\lambda}(s,t,N),
$$
where $\varphi^{(0)}_{\lambda}(s,t,N)\equiv 1$,  and
$$
\varphi^{(k+1)}_{\lambda}(s,t,N):=\lambda\mathbb{E}\left [\int_{s\wedge \tau}^{t\wedge \tau}\beta_N(r)\varphi^{(k)}_{\lambda}(r,t,N)dr|\mathcal{F}_{s\wedge \tau}\right].
$$
To estimate $\varphi^{(k)}_{\lambda}(s,t,N)$ uniformly with respect to $N$, we observe that
$$
\mathbb{E}\left \{\int_{s\wedge\tau}^{t\wedge\tau}\beta_N(r)dr|\mathcal{F}_{s\wedge\tau}\right \}\leq
\mathbb{E}\left \{\int_{s\wedge\tau}^{t\wedge\tau}\beta(r)dr|\mathcal{F}_{s\wedge\tau}\right \} \leq \rho(s,t).
$$
By induction on $k$, we now easily arrive at the inequalities by using condition (i)
$$
\varphi^{(k)}_{\lambda}(s,t,N)\leq (\lambda\rho(s,t))^k, ~~k=0,1,\ldots ,
$$
It then follows from (ii) that for a fixed $\lambda<\kappa_0^{-1}$ ( We use the  convention $0^{-1}=\infty$) there is an $h_0>0$ such that $\lambda \rho(s,t)<1$ if $t-s<h_0$. Therefore, for the given $\lambda<\kappa_0^{-1}$, we obtain
\begin{equation}\label{eq1.01}
\varphi_{\lambda}(s,t,N)=\sum_{k=0}^\infty\varphi^{(k)}_{\lambda}(s,t,N)\leq \sum_{k=0}^\infty (\lambda\rho(s,t))^k \leq (1-\lambda\rho(s,t))^{-1}< \infty
\end{equation}
for all $0<s<t<T$ with $t-s<h_0$. Now partition $[0, T\wedge\tau]$ by points
$0= t_0\wedge\tau <t_1\wedge\tau <\ldots < t_n\wedge\tau= T\wedge\tau$, so that $\max_{1\leq k\leq n}(t_k-t_{k-1})< h_0$.
An easy   computation deduces that $t_k\wedge\tau-t_{k-1}\wedge\tau\leq t_k - t_{k-1}$. Then by (\ref{eq1.01})
\begin{align}
&\mathbb{E}\exp\left \{\lambda\int_0^{T\wedge\tau}\beta_N(r)dr\right \}\\ \nonumber
&=\mathbb{E}\prod_{i=0}^{n-1}\exp\left \{\lambda\int_{t_i\wedge\tau}^{t_{i+1}\wedge\tau}\beta_N(r)dr\right \}\\\nonumber
&=\mathbb{E}\left [\mathbb{E}\left [\prod_{i=0}^{n-1}\exp\left \{\lambda\int_{t_i\wedge\tau}^{t_{i+1}\wedge\tau}\beta_N(r)dr\right \}|\mathcal{F}_{t_{n-1}\wedge\tau}\right ]\right ]\\\nonumber
&=\mathbb{E}\left [\prod_{i=0}^{n-2}\exp\left \{\lambda\int_{t_i\wedge\tau}^{t_{i+1}\wedge\tau}\beta_N(r)dr\right \}\mathbb{E}\left [\exp\left \{\lambda\int_{t_{n-1}\wedge\tau}^{t_{n}\wedge\tau}\beta_N(r)dr\right \} |\mathcal{F}_{t_{n-1}\wedge\tau}\right ]\right ]\\\nonumber
&=\mathbb{E}\left [\prod_{i=0}^{n-2}\exp\left \{\lambda\int_{t_i\wedge\tau}^{t_{i+1}\wedge\tau}\beta_N(r)dr\right \}\varphi_{\lambda}(t_{n-1},t_n,N)\right ]\\\nonumber
&\leq (1-\lambda\rho(t_{n-1},t_n))^{-1}\mathbb{E}\prod_{i=0}^{n-2}\exp\left \{\lambda\int_{t_i\wedge\tau}^{t_{i+1}\wedge\tau}\beta_N(r)dr\right \} \\\nonumber
&\leq \ldots \leq \prod_{i=1}^{n}(1-\lambda\rho(t_{i-1},t_i))^{-1}.
\end{align}
Finally,  by dominated  convergence theorem, we get
\begin{eqnarray*}
\mathbb{E}\exp\left \{\lambda\int_0^{T\wedge\tau}\beta(r)dr\right \}&=&\lim_{N\rightarrow \infty} \mathbb{E}\exp\left \{\lambda\int_0^{T\wedge\tau}\beta_N(r)dr\right \} \\ \nonumber
&\leq & \prod_{i=1}^{n}(1-\lambda\rho(t_{i-1},t_i))^{-1}<\infty\,.
\end{eqnarray*}
The proof of the lemma is completed.
\end{proof}

\begin{lemma}\label{le6.2}
Let $(\xi(t))_{t\in[0,T]}$, $(\zeta(t))_{t\in[0,T]}$ and $(\beta(t))_{t\in[0,T]}$ be three real-valued measurable $\mathcal{F}_t$-adapted processes.  Let $(\eta(t))_{t\in [0,T]}$ and $(\alpha(t))_{t\in [0,T]}$ be two $\mathbb{R}^d$-valued measurable $\mathcal{F}_t$-adapted processes. Suppose there exist $c>0$ and $\delta\in(0,1)$ such that for any $0<s\leq t\leq T$ and any stopping time $\tau$,
\begin{equation}\label{eq6.1}
\mathbb{E}\left \{\int_{s\wedge\tau}^{t\wedge\tau}|\beta(r)|+|\alpha(r)|^2dr|\mathcal{F}_{s\wedge\tau}\right \}\leq c(t-s)^{\delta}
\end{equation}
and suppose that
$$
\xi(t)=\int_0^t\zeta(r)dr+\int_0^t\eta(r)dB_r+\int_0^t\xi(r)\beta(r)dr+\int_0^t\xi(r)\alpha(r)dB_r.
$$
Then for any $p>0$ and $\gamma_2, \gamma_4>1$, we have
\begin{equation}\label{eq6.2}
\mathbb{E}\left (\sup_{0\leq t\leq T\wedge\tau}\xi^p(t)\right )\leq C\left(\left |\left |\left (\int_0^{T\wedge\tau}\zeta^{+}(r)dr\right )^p\right |\right |_{L_{\gamma_4}(\Omega)}+\left |\left |\left (\int_0^{T\wedge\tau}|\eta(r)|^2dr\right )^{p/2}\right |\right |_{L_{\gamma_2}(\Omega)}\right)\,,
\\
\end{equation}
where $C=C(c,\delta,p,\gamma_2,\gamma_4)>0$.
\end{lemma}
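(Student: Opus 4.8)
The plan is to solve the linear equation for $\xi$ explicitly by an integrating factor and then estimate the resulting formula, the decisive input being Lemma \ref{le6.1}, which lets us control that factor in every $L^{s}(\Omega)$. First I would rewrite the given identity in differential form,
\begin{equation*}
d\xi_t=\big(\zeta_t+\xi_t\beta_t\big)\,dt+\big(\eta_t+\xi_t\alpha_t\big)\cdot dB_t,\qquad \xi_0=0,
\end{equation*}
and apply Lemma \ref{le6.1} with the interval function $\rho(s,t)=c(t-s)^{\delta}$, which satisfies (i) and (ii) of that lemma with $\kappa_0=0$ since $\delta>0$; this yields $\mathbb{E}\exp\{\lambda\int_0^{T\wedge\tau}(|\beta_r|+|\alpha_r|^2)\,dr\}<\infty$ for every $\lambda\in\mathbb{R}$.

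Next, introduce the strictly positive continuous integrating factor $Z_t:=\exp\{\int_0^t(\beta_r-\tfrac12|\alpha_r|^2)\,dr+\int_0^t\alpha_r\cdot dB_r\}$. For any $s>0$ one writes $Z_t^{\,s}$, and likewise $Z_t^{-s}$, as the product of the Dol\'eans--Dade exponential martingale associated with $s\int\alpha\,dB$ and a factor of the form $\exp\{(\mathrm{const})\int_0^{T\wedge\tau}(|\beta_r|+|\alpha_r|^2)\,dr\}$; estimating the supremum over $t\le T\wedge\tau$ of the martingale factor by the Dambis--Dubins--Schwarz time change together with the reflection principle reduces $\mathbb{E}(\sup_{t\le T\wedge\tau}Z_t)^{s}<\infty$ and $\mathbb{E}(\sup_{t\le T\wedge\tau}Z_t^{-1})^{s}<\infty$ to the exponential integrability of $\int_0^{T\wedge\tau}(|\beta_r|+|\alpha_r|^2)\,dr$ just obtained. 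Hence $\sup_{t\le T\wedge\tau}Z_t$ and $\sup_{t\le T\wedge\tau}Z_t^{-1}$ lie in $L^{s}(\Omega)$ for all $s>0$.

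Applying It\^o's product rule to $\xi_tZ_t^{-1}$ and cancelling terms gives $d(\xi_tZ_t^{-1})=Z_t^{-1}(\zeta_t-\alpha_t\cdot\eta_t)\,dt+Z_t^{-1}\eta_t\cdot dB_t$, hence, since $\xi_0Z_0^{-1}=0$,
\begin{equation*}
\xi_t=Z_t\Big(\int_0^tZ_r^{-1}\big(\zeta_r-\alpha_r\cdot\eta_r\big)\,dr+N_t\Big),\qquad N_t:=\int_0^tZ_r^{-1}\eta_r\cdot dB_r.
\end{equation*}
Since $Z_t>0$ we have $\xi_t^{+}=Z_t(\int_0^tZ_r^{-1}(\zeta_r-\alpha_r\cdot\eta_r)\,dr+N_t)^{+}$, and bounding $\zeta_r\le\zeta_r^{+}$, $|\alpha_r\cdot\eta_r|\le|\alpha_r|\,|\eta_r|$ and using Cauchy--Schwarz in $dr$ gives the pathwise bound
\begin{equation*}
\sup_{t\le T\wedge\tau}\xi_t^{+}\le\big(\sup_{t}Z_t\big)\big(\sup_{r}Z_r^{-1}\big)\Big[\int_0^{T\wedge\tau}\!\zeta_r^{+}\,dr+\Big(\int_0^{T\wedge\tau}\!|\alpha_r|^2dr\Big)^{1/2}\Big(\int_0^{T\wedge\tau}\!|\eta_r|^2dr\Big)^{1/2}\Big]+\big(\sup_{t}Z_t\big)\sup_{t}|N_t|.
\end{equation*}
(When $\xi\ge0$, which is the case in all our applications since there $\xi$ is a square, $\sup_t\xi_t^{+}=\sup_t\xi_t$, so this controls the quantity in \eqref{eq6.2}.)

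Finally I would raise this to the power $p$ and take expectations. Fix an auxiliary exponent $\gamma$ with $1<\gamma<\min(\gamma_2,\gamma_4)$; H\"older with conjugate exponents $(\gamma',\gamma)$ peels off the factor involving $\sup_tZ_t$ and $\sup_rZ_r^{-1}$, whose $L^{\gamma'}$-norm is finite by the second step, leaving an $L^{\gamma}$-norm of the bracket plus of $\sup_t|N_t|$. In the bracket, a further H\"older puts $(\int\zeta^{+})^{p}$ into exactly $L^{\gamma_4}(\Omega)$; a triple H\"older separates $(\int|\alpha|^2)^{p/2}$ (finite moments of all orders, by the exponential integrability) and puts $(\int|\eta|^2)^{p/2}$ into exactly $L^{\gamma_2}(\Omega)$; and for the martingale term Burkholder--Davis--Gundy gives $\mathbb{E}\sup_t|N_t|^{p\gamma}\le C\,\mathbb{E}\big(\int_0^{T\wedge\tau}Z_r^{-2}|\eta_r|^2dr\big)^{p\gamma/2}\le C\,\mathbb{E}\big[(\sup_rZ_r^{-1})^{p\gamma}(\int|\eta|^2)^{p\gamma/2}\big]$, which after one more H\"older again yields the $L^{\gamma_2}$-norm of $(\int|\eta|^2)^{p/2}$. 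Collecting these contributions gives \eqref{eq6.2} with $C=C(c,\delta,p,\gamma_2,\gamma_4)$. The step I expect to be the main obstacle is the second one, namely proving that $\sup_{t\le T\wedge\tau}Z_t$ and $\sup_{t\le T\wedge\tau}Z_t^{-1}$ have finite moments of \emph{all} orders: this is where Lemma \ref{le6.1} is indispensable, and the delicate point is that one needs the supremum over $t$ of a stochastic exponential, not merely its terminal value, which forces the use of Dambis--Dubins--Schwarz and the reflection principle to reduce to an exponential moment of $\int_0^{T\wedge\tau}|\alpha_r|^2\,dr$. By contrast the chain of H\"older inequalities in the last step is pure bookkeeping, and it is precisely that bookkeeping that explains why the hypotheses require nothing beyond $\gamma_2,\gamma_4>1$.
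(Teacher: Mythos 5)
Your proposal is correct and follows essentially the paper's own route: the same integrating-factor representation $\xi_t=M_t\bigl(\int_0^t M_r^{-1}\eta_r\,dB_r+\int_0^t M_r^{-1}(\zeta_r-\langle\alpha_r,\eta_r\rangle)\,dr\bigr)$ obtained via It\^o's formula, the same use of Lemma \ref{le6.1} (with $\rho(s,t)=c(t-s)^{\delta}$, $\kappa_0=0$) to get exponential moments of $\int_0^{T\wedge\tau}(|\beta|+|\alpha|^2)\,dr$, and then H\"older plus Burkholder--Davis--Gundy bookkeeping to isolate the $L^{\gamma_4}$- and $L^{\gamma_2}$-norms. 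The only deviations are technical: the paper bounds $\mathbb{E}\sup_t M_t^{p}$ by Doob's maximal inequality applied to the exponential martingale rather than your Dambis--Dubins--Schwarz/reflection argument (both work, Doob being shorter), and your version tracks the $M^{-1}$ weights and the cross term $\langle\alpha,\eta\rangle$ explicitly via Cauchy--Schwarz and the all-order moments of $\int|\alpha|^2$, details which the paper's final chain of inequalities passes over silently.
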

\begin{proof}Write
$$
M(t):=\exp\left \{\int_0^t\alpha(r)dB_r-\frac{1}{2}\int_0^t|\alpha(r)|^2dr+\int_0^t\beta(r)dr\right \}.
$$
Applying It\^o’s formula to $\xi(t)M^{-1}(t)$, we can see that
\begin{align}
&d(\xi(t)M^{-1}(t)) \\ \nonumber
&=M^{-1}(t)d\xi(t)-\xi(t)M^{-1}(t)(\alpha(t)dB_t-|\alpha(t)|^2dt+\beta(t)dt)
+\langle d\xi, dM^{-1} \rangle_t \\ \nonumber
&=M^{-1}(t)\eta(t)dB_t+M^{-1}(t)(\zeta(t)-\langle \alpha(t),\eta(t)\rangle )dt
\end{align}
and then
\begin{equation}\label{eq1.02}
\xi(t)=M(t)\left (\int_0^tM^{-1}(r)\eta(r)dB_r+\int_0^tM^{-1}(r)(\zeta(r)-\langle \alpha(r),\eta(r)\rangle )dr\right ).
\end{equation}
By (\ref{eq6.1}) and Lemma \ref{le6.1}, we have for any $p\in \mathbb{R}$,
\begin{equation}\label{eq1.2}
\mathbb{E}\exp\left \{p\int_0^{T\wedge\tau}|\alpha(r)|^2dr+p\int_0^{T\wedge\tau}|\beta(r)|dr\right \}\leq C<\infty,
\end{equation}
where the positive constant $C$ depends only  on $c,p,\delta$. Furthermore, for any $p\in \mathbb{R}$, by (\ref{eq1.2}),
$$
\exp\left \{p\int_0^{t\wedge\tau}\alpha(r)dB_r-\frac{p^2}{2}\int_0^{t\wedge\tau}|\alpha(r)|^2dr\right \}
$$
is an uniformly integrable exponential martingale. Now we introduce some notations for the sake of simplification below,
$$
I_1(t):=\exp\left \{p\int_0^t\alpha(r)dB_r-\frac{p^2p_1}{2}\int_0^t|\alpha(r)|^2dr\right \},
$$
$$
I_2(t):=\exp\left \{\frac{p^2p_1-p}{2}\int_0^t|\alpha(r) ^2dr+p\int_0^t\beta(r)dr\right \}
$$
with $p_1>1$,  $p\in \RR$,
$$
I_3(t):=\int_0^tM^{-1}(r)\eta(r)dB_r,
$$
and
$$
I_4(t):=\int_0^tM^{-1}(r)(\zeta(r)-\langle \alpha(r),\eta(r)\rangle )dr\,.
$$
Thus, by H\"older's inequality, Doob's maximal inequality \cite[Theorem B]{Wang} and Lemma \ref{le6.1}, we have for any $p\in \mathbb{R}$,
\begin{align}\label{eq1.3}
&\mathbb{E}(\sup_{0\leq t\leq T\wedge\tau}|M(t)|^p)\\ \nonumber
&=\mathbb{E}\sup_{0\leq t\leq T\wedge\tau}\exp\left \{p\int_0^t\alpha(r)dB_r-\frac{p}{2}\int_0^t|\alpha(r)|^2dr+p\int_0^t\beta(r)dr\right \}\\ \nonumber
&=\mathbb{E}(\sup_{0\leq t\leq T\wedge\tau}I_1(t)I_2(t))\\ \nonumber
&\leq  \left (\mathbb{E}(\sup_{0\leq t\leq T\wedge\tau}I_1)^{p_1}\right )^{1/p_1}\times \left (\mathbb{E}(\sup_{0\leq t\leq T\wedge\tau}I_2)^{p_2}\right )^{1/p_2}\\\nonumber
&\leq \left (\frac{p_1}{p_1-1}\right )\left (\mathbb{E}\exp\left \{pp_1\int_0^{T\wedge\tau}\alpha(r)dB_r-\frac{p^2p^2_1}{2}\int_0^{T\wedge\tau}|\alpha(r)|^2dr\right \}\right )^{1/p_1}\\\nonumber
& \times \left (\mathbb{E}(\sup_{0\leq t\leq T\wedge\tau}I_2)^{p_2}\right )^{1/p_2}\\\nonumber
&\leq \left (\frac{p_1}{p_1-1}\right ) \left (\mathbb{E}(\sup_{0\leq t\leq T\wedge\tau}I_2)^{p_2}\right )^{1/p_2} \\\nonumber
&<\infty,
\end{align}
where $p_2>1$ is the conjugate of $p_1$.
Finally, for any $p>0$, by (\ref{eq1.02}), H\"older's inequality, (\ref{eq1.3}) and Burkh\"older's inequality \cite[corollary 4.2, Chapter IV]{Revuz},
\begin{align}
&\mathbb{E}(\sup_{0\leq t\leq T\wedge\tau}(\xi(t))^p)\\\nonumber
&=\mathbb{E}(\sup_{0\leq t\leq T\wedge\tau}|M(t)|^p(I_3(t)+I_4(t))^p)\\\nonumber
&\leq C_p (\mathbb{E}(\sup_{0\leq t\leq T\wedge\tau}|M(t)|^p)^{\gamma_1})^{1/\gamma_1} \times (\mathbb{E}(\sup_{0\leq t\leq T\wedge\tau}I^p_3(t))^{\gamma_2})^{1/\gamma_2}\\\nonumber
&\qquad +C_p (\mathbb{E}(\sup_{0\leq t\leq T\wedge\tau}|M(t)|^p)^{\gamma_3})^{1/\gamma_3} \times (\mathbb{E}(\sup_{0\leq t\leq T\wedge\tau}I^p_4(t))^{\gamma_4})^{1/\gamma_4}\\\nonumber
&\leq C_{p,\gamma_1,\gamma_3,c,\delta}\left (\left (\mathbb{E}(\sup_{0\leq t\leq T\wedge\tau}I^p_3(t))^{\gamma_2}\right )^{1/\gamma_2}+\left (\mathbb{E}(\sup_{0\leq t\leq T\wedge\tau}I^p_4(t))^{\gamma_4}\right )^{1/\gamma_4}\right )\\\nonumber
&\leq  C_{p,\gamma_1,\gamma_3,c,\delta}\left (\mathbb{E}\left (\int_0^{T\wedge\tau}|\eta(r)|^2dr\right )^{\frac{p\gamma_2}{2}}\right )^{1/\gamma_2}
+C_{p,\gamma_1,\gamma_3,c,\delta}\left (\mathbb{E}(\sup_{0\leq t\leq T\wedge\tau}I^p_4(t))^{\gamma_4}\right )^{1/\gamma_4} \\ \nonumber
&\leq  C_{p,\gamma_1,\gamma_3,c,\delta} \left(\left|\left|\left(\int_0^{T\wedge\tau}|\eta(r)|^2dr\right)^{\frac{p}{2}}\right|\right|_{L_{\gamma_2}(\Omega)}
+\left|\left|\left(\int_0^{T\wedge\tau}\zeta^+(r)dr\right)^p\right|\right|_{L_{\gamma_4}(\Omega)}\right),
\end{align}
where $\gamma_1,\gamma_2,\gamma_3,\gamma_4>1$,  $\gamma_1,\gamma_2$ and $\gamma_3,\gamma_4$ are two pairs of conjugate numbers.  The proof of  \ref{eq6.2} is completed.
\end{proof}

\end{document}